\journal{Computer Methods in Applied Mechanics and Engineering}
\newcommand*{\rom}[1]{\expandafter\@slowromancap\romannumeral #1@}
\newenvironment{myenv}[1]
  {\mdfsetup{
    frametitle={\colorbox{white}{\space#1\space}},
    innertopmargin=10pt,
    frametitleaboveskip=-\ht\strutbox,
    frametitlealignment=\center
    }
  \begin{mdframed}
  }
  {\end{mdframed}}
\DeclareMathAlphabet\mathbfcal{OMS}{cmsy}{b}{n}
\newtheorem{lemma}{Lemma}
\newtheorem{proposition}{Proposition}
\newtheorem{remark}{Remark}
\numberwithin{equation}{section}
\begin{document}
\begin{frontmatter}
\title{A continuum and computational framework for viscoelastodynamics: \rom{2}. Strain-driven and energy-momentum consistent schemes}

\author[label1,label2]{Ju Liu}
\ead{liuj36@sustech.edu.cn,liujuy@gmail.com}

\author[label1]{Jiashen Guan}

\address[label1]{Department of Mechanics and Aerospace Engineering, Southern University of Science and Technology, Shenzhen, Guangdong 518055, P.R.China}

\address[label2]{Guangdong-Hong Kong-Macao Joint Laboratory for Data-Driven Fluid Mechanics and Engineering Applications, Southern University of Science and Technology, Shenzhen, Guangdong, 518055, P.R.China}

\begin{abstract}
We continue our investigation of finite deformation linear viscoelastodynamics by focusing on constructing accurate and reliable numerical schemes. The concrete thermomechanical foundation developed in the previous study paves the way for pursuing discrete formulations with critical physical and mathematical structures preserved. Energy stability, momentum conservation, and temporal accuracy constitute the primary factors in our algorithm design. For inelastic materials, the directionality condition, a property for the stress to be energy consistent, is extended with the dissipation effect taken into account. Moreover, the integration of the constitutive relations calls for an algorithm design of the internal state variables and their conjugate variables. A directionality condition for the conjugate variables is introduced as an indispensable ingredient for ensuring physically correct numerical dissipation. By leveraging the particular structure of the configurational free energy, a set of update formulas for the internal state variables is obtained. Detailed analysis reveals that the overall discrete schemes are energy-momentum consistent and achieve first- and second-order accuracy in time, respectively. Numerical examples are provided to justify the appealing features of the proposed methodology.

\end{abstract}

\begin{keyword}
Continuum mechanics \sep Energy-momentum method \sep Viscoelasticity \sep Integration algorithm for constitutive equations \sep Isogeometric analysis \sep Nonlinear stability
\end{keyword}
\end{frontmatter}

\section{Introduction}
The research motivation of this work comes from the belief that reliable numerical methods are built based on a concrete continuum basis. In this study, we refer to Liu, Latorre, and Marsden \cite{Liu2021b} as Part \rom{1} and investigate reliable numerical designs for finite deformation linear viscoelastic models.  Previous efforts in this direction relied crucially on a set of evolution equations for stress-like variables, which are typically denoted by $\bm Q^{\alpha}$. Also, the forms of the evolution equations for $\bm Q^{\alpha}$ were given in a heuristic manner and often adopted different forms by different authors \cite{Simo1987,Holzapfel1996a,Holzapfel2001,Simo2006,Holzapfel1996,Gueltekin2016}. One contribution of Part \rom{1} was the derivation of the evolution equations based on a rational approach, and the evolution equations take the form
\begin{align*}
\frac{d}{dt} \bm Q^{\alpha} + \frac{\bm Q^{\alpha}}{\tau^{\alpha}} = \frac{d}{dt} \tilde{\bm S}^{\alpha}_{\mathrm{iso}} - \frac{d\mu^{\alpha}}{dt}\left( \bm \Gamma^{\alpha} - \bm I \right).
\end{align*}
The internal state variable $\bm \Gamma^{\alpha}$ can be viewed as a deformation tensor analogous to the right Cauchy-Green tensor and is conjugate to $\bm Q^{\alpha}$ with respect to the free energy. In Part \rom{1}, a set of evolution equations in terms of the inelastic strain $\left(\bm \Gamma^{\alpha} - \bm I\right) / 2$ was also provided, which reads as
\begin{align*}
\eta^{\alpha} \frac{d}{dt} \left( \frac{\bm \Gamma^{\alpha} - \bm I}{2} \right) + \mu^{\alpha} \left(\frac{ \bm \Gamma^{\alpha} - \bm I}{2} \right) = \frac12 \left( \tilde{\bm S}^{\alpha}_{\mathrm{iso}} - \hat{\bm S}^{\alpha}_{0} \right).
\end{align*}
In fact, the evolution equations for $\bm Q^{\alpha}$ result from taking time derivatives at both sides of the above equations.

Treating the strain as the basic independent variable, known as the strain-driven format, has been viewed as an appropriate approach when compared against the stress-driven format \cite{Simo2006}. One reason is that the strain cannot be uniquely determined from a given stress state in nonlinear elasticity (e.g., in the presence of material softening). A second reason is that the strain-driven format naturally relates the strain and stress through the potential energy, and this relation often serves as the basis for demonstrating numerical stability in the discrete formulation, especially in nonlinear problems. In this study, the relation between the stress-like variables $\bm Q^{\alpha}$ and the internal state variables $\bm \Gamma^{\alpha}$ is linear, meaning the two can interchange conveniently. Here, we choose to integrate the inelastic deformation by using the evolution equation written in terms of $\bm \Gamma^{\alpha}$ primarily due to the second reason. In specific, we will show that the discrete counterparts of $\bm \Gamma^{\alpha}$ can be integrated in such a way that the algorithmic stress is provably dissipative, and the numerical dissipation is consistent with the physical dissipation.

In the construction of the numerical algorithms, a primary design criterion is the \textit{directionality property}, which is a discrete analogue of the continuous relation between the stress and the deformation tensor. For elastodynamics, this property demands that the contraction between the algorithmic stress and the difference of the discrete deformation tensors equals twice the difference of the discrete energy. This concept was originally introduced in the context of $N$-body dynamics. Several algorithmic formulas for the conservative force were developed that satisfy the directionality property, including the difference quotient formula introduced by LaBudde and Greenspan \cite{Greenspan1984,LaBudde1974,LaBudde1976} and the scaled mid-point formula introduced by Chorin, et al. \cite{Chorin1978}. Inspired by those formulas, Simo introduced a collocation scheme for nonlinear elastodynamics based on a mean-value theorem argument \cite{Simo1992a,Simo1992c}. This algorithm necessitates a local Newton-Raphson iteration to determine the collocation parameter for general nonlinear models \cite{Laursen2001}. An alternate strategy that bypasses the burdensome local iteration is the discrete gradient formula proposed by Gonzalez \cite{Gonzalez2000}, in which a closed-form algorithmic stress formula was provided. Ramifications of this formula were later developed \cite{Armero2007,Bui2007,Romero2012} with applications in thin-walled structures \cite{Miehe2001,Sansour2004}, contact mechanics \cite{Hauret2006}, elastoplasticity \cite{Armero2006,Armero2007,Meng2002}, multiphysics coupled problems \cite{Franke2022}, to name a few. In addition to the directionality property, we mention that a consistent numerical design also demands the algorithmic stress to be symmetric and second-order accurate \cite{Romero2012}. The former ensures the discrete angular momentum conservation; the latter is often analyzed by examining the algorithmic stress against the mid-point formula. We also mention that higher-order accuracy can be achieved through a compositional approach \cite{Tarnow1994}.

In addition to the aforementioned physical problems, energy-momentum consistent numerical schemes have been designed for viscoelasticity. A majority of efforts focused on the viscoelastic models based on the multiplicative decomposition of the deformation gradient \cite{Reese1998,Sidoroff1974}. An initial effort was made by interpolating the strain-like internal state variable independently in the variational formulation \cite{Gross2010}. That strategy was later generalized with the aid of the GENERIC formalism \cite{Romero2009}, rendering an energy-entropy consistent integrator for thermo-viscoelasticity \cite{Krueger2016,Schiebl2021}. 

There are fewer studies on constructing structure-preserving schemes for the viscoelastic model discussed in Part \rom{1}. To the authors' knowledge, the only exception is the work of Mart\'in, et al. \cite{Martin2014}. In that work, the continuum model followed the framework of Holzapfel and Simo \cite{Holzapfel1996}, which will lead to non-vanishing non-equilibrium stress in the thermodynamic equilibrium limit (see Section 2.4.3 of Part \rom{1}); aside from that, a sophisticated algorithmic definition for the stress-like variable $\bm Q^{\alpha}$ was proposed by leveraging the discrete gradient formula extensively. It should be pointed out that their discrete gradient formula of $\bm Q^{\alpha}$ inevitably leads to a nonlinear algebraic relation for the internal state variables, necessitating an iterative method of the Newton-Raphson type at each quadrature point during the integration of the constitutive relations. In this regard, using the discrete gradient formula is not particularly advantageous to Simo's collocation approach (see our discussion in Section \ref{subsec:remark-on-two-alternative-approaches}). Another drawback is that the quotient form of the discrete gradient formula is a potential source of numerical instability and demands special care in practice \cite{Liu2023,Mohr2008}. In this study, the linearity between the internal state variables $\bm \Gamma^{\alpha}$ and their conjugate counterparts $\bm Q^{\alpha}$ is exploited, and two update formulas are derived that circumvent the two difficulties conveniently.

Although the focus of this study is primarily on the integration of the balance and constitutive equations, some recently developed spatial discretization techniques have been adopted. Following Part \rom{1}, the variational problems are formulated in terms of a finite strain generalization of the Herrmann variational principle \cite{Herrmann1965,Liu2018}, and the smooth generalization of the Taylor-Hood element is adopted as the element technology. The grad-div stabilization technique is supplemented to enhance the element pairs' performance in terms of discrete mass conservation \cite{Liu2019a,Liu2021b,Guan2023}. Indeed, it has been proven that this combination of technology leads to a discrete divergence-free velocity field asymptotically in fluid and transport problems \cite{Olshanskii2002,Colomes2016}. In the meantime, the stabilization term respects the underlying physical structure as it is energy dissipative and momentum conserving. It thus constitutes an appealing ingredient in the overall proposed numerical methodology.

The structure of the work is organized as follows. The continuum formulation of the problem is briefly summarized in Section \ref{sec:viscoelastodynamics}. Following that, the spatial discretization with grad-div stabilization is performed to obtain the semi-discrete formulation in Section \ref{subsec:semi-discrete}. In Section \ref{sec:time-integration}, the time integration scheme is designed and analyzed. After a general discussion on the design criteria for energy-momentum consistent temporal schemes, a constitutive integration algorithm together with an algorithmic second Piola-Kirchhoff stress is provided to demonstrate the main numerical design strategies. Following that, a more sophisticated formula is constructed in Section \ref{sec:second-order-update-formula-ISV}, which improves the temporal accuracy from first-order to second-order. In Section \ref{subsec:remark-on-two-alternative-approaches}, the connection with and superiority over a previously existing formula are discussed. A non-singular evolution equation for nonlinear viscoelastic models is proposed. In Section \ref{sec:implementation}, the implementation details are outlined. A special predictor in the Newton-Raphson iteration is proposed, which helps the segregation of the implicit algorithm without losing consistency. In Section \ref{sec:numerical_examples}, a set of numerical examples is presented, and we draw conclusions in Section \ref{sec:conclusion}. In \ref{sec:appendix-A} and \ref{sec:appendix-B}, we provide implementation details of the algorithmic elasticity tensors. In \ref{sec:appendix-C}, a glossary of terms is provided to facilitate the understanding of this work.

\section{Viscoelastodynamics}
\label{sec:viscoelastodynamics}
In this section, we present the continuum model based on which we construct the numerical algorithms. More details about the thermomechanical background can be found in Part \rom{1}. Following the notations used in Part \rom{1}, the norms of a vector $\bm W$ and a rank-two tensor $\bm A$ are, respectively, defined as 
\begin{align*}
\left \lvert \bm W \right\rvert := \left( \bm W \cdot \bm W \right)^{\frac12} \quad \mbox{ and } \quad \left \lvert \bm A \right\rvert := \left( \mathrm{tr}[\bm A \bm A^T] \right)^{\frac12} = \left( \bm A : \bm A \right)^{\frac12}.
\end{align*}

\subsection{Continuum basis}
\label{sec:continuum-basis}
In this section, we briefly review the continuum basis for finite deformation linear viscoelasticity that was derived in \cite{Liu2021}. Let $\Omega_{\bm X}$ and $\Omega_{\bm x}^t$ be bounded open sets in $\mathbb R^{3}$ with Lipschitz boundary denoted by $\Gamma_{\bm X}$ and $\Gamma_{\bm x}^t$. The unit outward normal vectors to $\Gamma_{\bm X}$ and $\Gamma_{\bm x}^t$ are denoted as $\bm N$ and $\bm n$, respectively. The motion of $\Omega_{\bm X}$ is given by a family of invertible, smooth, and orientation-preserving mappings parameterized by the time coordinate $t$,
\begin{align*}
\bm \varphi_t( \cdot ) := \bm \varphi(\cdot, t) : \Omega_{\bm X} \rightarrow \Omega^t_{\bm x} := \bm \varphi(\Omega_{\bm X},t) = \bm \varphi_t(\Omega_{\bm X}), \quad \forall t \geq 0, \qquad \bm X \mapsto \bm x = \bm \varphi(\bm X,t) = \bm \varphi_t(\bm X),
\end{align*}
in which $\bm X \in \Omega_{\bm X}$ and $\bm x \in \Omega^t_{\bm x}$ represent the material and spatial points, respectively. It is convenient to regard $\bm X$ as the original location of a material particle currently located at $\bm x$, which demands $\bm \varphi(\cdot, 0)$ to be an identity map. The displacement and velocity of the material particle originally located at $\bm X$ are, respectively, defined as $\bm U(\bm X,t) := \bm \varphi(\bm X,t) - \bm \varphi(\bm X, 0) = \bm \varphi(\bm X, t) - \bm X$ and $\bm V := d\bm U/dt$, where $d(\cdot)/dt$ denotes the total time derivative. Let $\rho_0 : \Omega_{\bm X} \rightarrow \mathbb R_{+}$ be the reference density, and the total linear momentum $\bm L$ and total angular momentum $\bm J$ are defined as
\begin{align*}
\bm L(t) := \int_{\Omega_{\bm X}} \rho_0 \bm V d\Omega_{\bm X}, \qquad \bm J(t) := \int_{\Omega_{\bm X}} \rho_0 \bm \varphi_t \times  \bm V d\Omega_{\bm X}.
\end{align*}
Assuming $\bm \varphi_t$ is sufficiently regular (i.e., at least $C^1$), the deformation gradient $\bm F$, Cauchy-Green deformation tensor $\bm C$, Jacobian determinant $J$, and distortional parts of $\bm F$ and $\bm C$ are given by
\begin{align*}
\bm F := \frac{\partial \bm \varphi_t}{\partial \bm X}, \quad \bm C := \bm F^T \bm F, \quad J := \mathrm{det}(\bm F), \quad \tilde{\bm F} := J^{-\frac13} \bm F, \quad \tilde{\bm C} := J^{-\frac23} \bm C.
\end{align*}
It is known that $\partial \tilde{\bm C}/\partial \bm C = J^{-\frac23} \mathbb P^{T}$ with $\mathbb P := \mathbb I - \frac13 \bm C^{-1} \otimes \bm C$ and $\mathbb I$ being the rank-four identity tensor. We denote the thermodynamic pressure of the continuum body on the reference configurations by $P$, and its counterpart on the current configuration is given by $p = P \circ \bm \varphi_t^{-1}$. A set of strain-like internal state variables $\lbrace \bm \Gamma^{\alpha} \rbrace_{\alpha=1}^{m}$ is introduced to characterize the inelastic deformation. The mechanical property of a viscoelastic material can be described via a Gibbs free energy $G(\tilde{\bm C}, P, \bm \Gamma^1, \cdots, \bm \Gamma^{m})$, which can be additively split into volumetric, isochoric equilibrium, and isochoric non-equilibrium parts,
\begin{align}
\label{eq:gibbs-energy}
G(\tilde{\bm C}, P, \bm \Gamma^1, \cdots, \bm \Gamma^{m}) =&  G^{\infty}_{\mathrm{vol}}(P) + G_{\mathrm{iso}}(\tilde{\bm C}, \bm \Gamma^{\alpha}, \cdots, \bm \Gamma^m)
= G^{\infty}_{\mathrm{vol}}(P) + G_{\mathrm{iso}}^{\infty}(\tilde{\bm C}) + \sum_{\alpha=1}^{m}\Upsilon^{\alpha}\left( \tilde{\bm C}, \bm \Gamma^{\alpha} \right).
\end{align}
In the above, $G^{\infty}_{\mathrm{vol}}$ and $G_{\mathrm{iso}}^{\infty}$ represent, respectively, the volumetric and isochoric hyperelastic material behavior at the equilibrium state as the time approaches infinity; $\Upsilon^{\alpha}$ characterize the dissipative, non-equilibrium behavior. Readers may refer to \cite[Section~2.3]{Liu2021} and references therein for a discussion on the rationale behind the energy split. The finite linear viscoelastic material is characterized by the following particular form of $\Upsilon^{\alpha}$,
\begin{align}
\Upsilon^{\alpha}\left(\tilde{\bm C}, \bm \Gamma^{\alpha} \right) &=  \mu^{\alpha} \left\lvert \frac{\bm \Gamma^{\alpha} - \bm I}{2} \right\rvert^2 + \left( \hat{\bm S}^{\alpha}_{0} - \tilde{\bm S}^{\alpha}_{\mathrm{iso}} \right) : \frac{\bm \Gamma^{\alpha} - \bm I}{2} + \frac{1}{4\mu^{\alpha}} \left\lvert \tilde{\bm S}^{\alpha}_{\mathrm{iso}} - \hat{\bm S}^{\alpha}_0 \right\lvert^2 \nonumber \\
 \label{eq:def-flv-Upsilon}
 &= \frac{1}{4\mu^{\alpha}} \left\lvert \tilde{\bm S}^{\alpha}_{\mathrm{iso}} - \hat{\bm S}^{\alpha}_{0} - \mu^{\alpha} \left( \bm \Gamma^{\alpha} - \bm I \right)\right\lvert^2,
\end{align}
in which
\begin{align}
\label{eq:def-flv-Upsilon-terms}
 \tilde{\bm S}^{\alpha}_{\mathrm{iso}} := 2 \frac{\partial G^{\alpha}(\tilde{\bm C})}{\partial \tilde{\bm C}}, \qquad \hat{\bm S}^{\alpha}_{0} :=& \tilde{\bm S}^{\alpha}_{\mathrm{iso} \: 0} - \bm Q^{\alpha}_{0}, \qquad  \tilde{\bm S}^{\alpha}_{\mathrm{iso} \: 0} := \tilde{\bm S}^{\alpha}_{\mathrm{iso}}|_{t=0} \qquad \bm Q^{\alpha}_0 := \bm Q^{\alpha}|_{t=0},
\end{align}
Here, $G^{\alpha}(\tilde{\bm C})$ are scalar-valued energy-like functions to be provided for material modeling. Given the energy, one may introduce a set of stress-like variables $\bm Q^{\alpha}$ conjugating to $\bm \Gamma^{\alpha}$,
\begin{align}
\label{eq:def-Q}
\bm Q^{\alpha} :=& -2 \frac{\partial G(\tilde{\bm C}, P, \bm \Gamma^1, \cdots, \bm \Gamma^{m})}{\partial \bm \Gamma^{\alpha}} = -2 \frac{\partial \Upsilon^{\alpha}(\tilde{\bm C}, \bm \Gamma^{\alpha})}{\partial \bm \Gamma^{\alpha}} = \tilde{\bm S}^{\alpha}_{\mathrm{iso}} - \hat{\bm S}^{\alpha}_{0} - \mu^{\alpha} \left( \bm \Gamma^{\alpha} - \bm I \right).
\end{align}
The constitutive relations for the density $\rho$, the deviatoric and volumetric parts of the Cauchy stress, and the conjugate variables $\bm Q^{\alpha}$ were systematically derived in \cite{Liu2021} and are stated as follows,
\begin{gather}
\label{eq:constitutive-relations}
\rho(p) = \rho_0 \left( \frac{dG_{\mathrm{vol}}}{dP} \right)^{-1} \circ \bm \varphi_t^{-1}, \quad \bm \sigma_{\mathrm{dev}} := J^{-1} \tilde{\bm F} \left( \mathbb P : \tilde{\bm S} \right) \tilde{\bm F}^T, \quad \frac13 \mathrm{tr}\left[\bm \sigma \right] = -p, \displaybreak[2] \\
\label{eq:split-fictitious-S}
\tilde{\bm S} := 2 \frac{\partial G}{\partial \tilde{\bm C}} = \tilde{\bm S}^{\infty}_{\mathrm{iso}} + \sum_{\alpha=1}^{m} \tilde{\bm S}^{\alpha}_{\mathrm{neq}}, \quad \tilde{\bm S}^{\infty}_{\mathrm{iso}} := 2 \frac{\partial G^{\infty}_{\mathrm{iso}}}{\partial\tilde{\bm C}}, \displaybreak[2] \\
\label{eq:non-equilbrium-fictitious-S-def}
\tilde{\bm S}^{\alpha}_{\mathrm{neq}} := 2 \frac{\partial \Upsilon^{\alpha}}{\partial\tilde{\bm C}} = \frac{1}{\mu^{\alpha}} \frac{\partial \tilde{\bm S}^{\alpha}_{\mathrm{iso}}}{\partial \tilde{\bm C}} : \left(\tilde{\bm S}^{\alpha}_{\mathrm{iso}} - \hat{\bm S}^{\alpha}_{0} - \mu^{\alpha} \left( \bm \Gamma^{\alpha} - \bm I \right) \right) =  \frac{1}{\mu^{\alpha}} \frac{\partial \tilde{\bm S}^{\alpha}_{\mathrm{iso}}}{\partial \tilde{\bm C}} : \bm Q^{\alpha}, \displaybreak[2] \\
\label{eq:constitutive-flv-Q}
 \bm Q^{\alpha} = \mathbb V^{\alpha} : \left( \frac12 \frac{d}{dt}\bm \Gamma^{\alpha} \right) = \eta^{\alpha} \frac{d \bm \Gamma^{\alpha}}{dt}.
\end{gather}
In the above, $\tilde{\bm S}$ is known as the fictitious second Piola-Kirchhoff stress, $\tilde{\bm S}^{\infty}_{\mathrm{iso}}$ and $\tilde{\bm S}^{\alpha}_{\mathrm{neq}}$, respectively, represent the equilibrium and non-equilibrium parts of $\tilde{\bm S}$, and $\mathbb V^{\alpha}$ is a rank-four, positive semi-definite viscosity tensor. Here, $\mathbb V^{\alpha} = 2 \eta^{\alpha} \mathbb I$, and $\eta^{\alpha}>0$ are the viscosities. The second Piola-Kirchhoff stress $\bm S$ can be written as
\begin{align}
\label{eq:constitutive_S}
& \bm S := \bm S_{\mathrm{iso}} + \bm S_{\mathrm{vol}}, \displaybreak[2] \\
\label{eq:constitutive_S_vol}
& \bm S_{\mathrm{vol}} := \frac13 \mathrm{tr}\left[ \bm \sigma \right] J \bm F^{-1} \bm F^{-T} = -J P \bm C^{-1}, \displaybreak[2] \\
\label{eq:constitutive_S_iso}
& \bm S_{\mathrm{iso}} := J \bm F^{-1} \bm \sigma_{\mathrm{dev}} \bm F^{-T} = J^{-\frac23} \mathbb P : \tilde{\bm S} = J^{-\frac23} \mathbb P : \left(  \tilde{\bm S}^{\infty}_{\mathrm{iso}} + \sum_{\alpha=1}^{m} \tilde{\bm S}^{\alpha}_{\mathrm{neq}} \right) = \bm S^{\infty}_{\mathrm{iso}} + \sum_{\alpha=1}^{m} \bm S^{\alpha}_{\mathrm{neq}}, \displaybreak[2] \\
\label{eq:constitutive_S_infty_iso_S_neq_alpha}
& \bm S^{\infty}_{\mathrm{iso}} := J^{-\frac23} \mathbb P : \tilde{\bm S}^{\infty}_{\mathrm{iso}}, \quad \mbox{and} \quad \bm S^{\alpha}_{\mathrm{neq}} := J^{-\frac23} \mathbb P : \tilde{\bm S}^{\alpha}_{\mathrm{neq}}.
\end{align}
To facilitate subsequent discussions, we introduce $\bm P_{\mathrm{iso}}$ as
\begin{align}
\label{eq:constitutive-P-iso}
\bm P_{\mathrm{iso}} := \bm F \bm S_{\mathrm{iso}} = J \bm \sigma_{\mathrm{dev}} \bm F^{-T}.
\end{align}

\begin{remark}
A more general form of finite linear viscoelasticity model was considered in Part \rom{1}, whose configurational free energy takes the form
\begin{align}
\label{eq:general_form_Upsilon}
\Upsilon^{\alpha}\left(\tilde{\bm C}, \bm \Gamma^{\alpha} \right) =  \mu^{\alpha} \left\lvert \frac{\bm \Gamma^{\alpha} - \bm I}{2} \right\rvert^2 + \left( \hat{\bm S}^{\alpha}_{0} - \tilde{\bm S}^{\alpha}_{\mathrm{iso}} \right) : \frac{\bm \Gamma^{\alpha} - \bm I}{2} + F^{\alpha}(\tilde{\bm C}).
\end{align}
However, a major issue is that the energy \eqref{eq:general_form_Upsilon} does not guarantee the vanishment of the non-equilibrium stress $\bm S^{\alpha}_{\mathrm{neq}}$ in the thermodynamic equilibrium limit. In Part \rom{1}, it is shown that the necessary condition for the vanishment of $\bm S^{\alpha}_{\mathrm{neq}}$ is
\begin{align*}
F^{\alpha}(\tilde{\bm C}) \Big|_{\mathrm{eq}} = \left( \frac{1}{4\mu^{\alpha}} \left\lvert \tilde{\bm S}^{\alpha}_{\mathrm{iso}} - \hat{\bm S}^{\alpha}_0 \right\lvert^2  \right)\Big|_{\mathrm{eq}},
\end{align*}
where the notation $\big|_{\mathrm{eq}}$ indicates that equality holds at the thermodynamic equilibrium limit. Consequently, the configurational free energy \eqref{eq:def-flv-Upsilon} is obtained by simply taking 
\begin{align}
\label{eq:general_form_F_alpha}
F^{\alpha}(\tilde{\bm C}) = \frac{1}{4\mu^{\alpha}} \left\lvert \tilde{\bm S}^{\alpha}_{\mathrm{iso}} - \hat{\bm S}^{\alpha}_0 \right\lvert^2,
\end{align}
which automatically ensures the relaxation of $\bm S^{\alpha}_{\mathrm{neq}}$. It can also be shown that the configurational free energy \eqref{eq:def-flv-Upsilon} can be reorganized as
\begin{align*}
\Upsilon^{\alpha}\left(\tilde{\bm C}, \bm \Gamma^{\alpha} \right) = \frac{1}{4\mu^{\alpha}} \left\lvert \bm Q^{\alpha} \right\rvert^2,
\end{align*}
by invoking the constitutive relation \eqref{eq:def-Q}.
\end{remark}

\begin{remark}
Choosing $F^{\alpha} = G^{\alpha}$ in \eqref{eq:general_form_Upsilon} recovers the configurational free energy introduced in \cite{Holzapfel1996}. Based on the analysis in Part \rom{1}, that model cannot guarantee the full relaxation of the non-equilibrium stress, except in the special case of
\begin{align*}
G^{\alpha} = \mu^{\alpha} \left\lvert \frac{\tilde{\bm C} - \bm I}{2} \right\lvert^2.
\end{align*}
Following the terminology introduced in Part \rom{1}, we refer to models using the above form of $G^{\alpha}$ as the Holzapfel-Simo-Saint Venant-Kirchhoff (HS) model. The identical-polymer-chain assumption was often utilized to design the energy as 
\begin{align}
\label{eq:G_alpha_def_MIPC}
G^{\alpha} = \beta_{\alpha}^{\infty}G^{\infty}_{\mathrm{iso}},
\end{align}
where $\beta^{\infty}_{\alpha} \in (0,+\infty)$ are non-dimensional parameters known as the strain-energy factors \cite{Govindjee1992}. However, the original design takes $F^{\alpha} = G^{\alpha}$ \cite{Holzapfel1996,Martin2014}, resulting in a pathological definition of the non-equilibrium stress. In Part \rom{1}, a rectification was made by constructing $F^{\alpha}$ following \eqref{eq:general_form_F_alpha}, which is referred to as the modified identical-polymer-chain (MIPC) model.
\end{remark}

\subsection{Evolution equations}
Combing the definition of $\bm Q^{\alpha}$ \eqref{eq:def-Q} and its constitutive relation \eqref{eq:constitutive-flv-Q}, we have a suite of equations governing the evolution of $\bm \Gamma^{\alpha}$,
\begin{align*}
\bm Q^{\alpha} = \eta^{\alpha} \frac{d\bm \Gamma^{\alpha}}{dt} = \tilde{\bm S}^{\alpha}_{\mathrm{iso}} - \hat{\bm S}^{\alpha}_{0} - \mu^{\alpha} \left( \bm \Gamma^{\alpha} - \bm I \right).
\end{align*}
The above equations can be reorganized and written as the evolution equations for the inelastic strain, 
\begin{align}
\label{eq:evolution-eqn-gamma}
\boxed{
\eta^{\alpha} \frac{d}{dt} \left( \frac{\bm \Gamma^{\alpha} - \bm I}{2} \right) + \mu^{\alpha} \left(\frac{ \bm \Gamma^{\alpha} - \bm I}{2} \right) = \frac12 \left( \tilde{\bm S}^{\alpha}_{\mathrm{iso}} - \hat{\bm S}^{\alpha}_{0} \right).}
\end{align}
Taking time derivatives at both sides of \eqref{eq:evolution-eqn-gamma} leads to
\begin{align}
\label{eq:evolution-eqn-gamma-strain}
\frac{d \bm Q^{\alpha}}{dt} = \frac{d \tilde{\bm S}^{\alpha}_{\mathrm{iso}}}{dt} - \mu^{\alpha} \frac{d\Gamma^{\alpha}}{dt} = \frac{d \tilde{\bm S}^{\alpha}_{\mathrm{iso}}}{dt} - \frac{\mu^{\alpha}}{\eta^{\alpha}} \bm Q^{\alpha},
\end{align}
which recovers the classical evolution equations for $\bm Q^{\alpha}$,
\begin{align}
\label{eq:evolution-eqn-Q}
\frac{d}{dt} \bm Q^{\alpha} + \frac{\mu^{\alpha}}{\eta^{\alpha}}\bm Q^{\alpha} = \frac{d}{dt} \tilde{\bm S}^{\alpha}_{\mathrm{iso}}.
\end{align}
Based on the evolution equations \eqref{eq:evolution-eqn-Q}, recursive stress update formulas have been designed as an integration rule for the constitutive relations \cite{Simo2006}. Yet, the evolution equations for the strain-like variables \eqref{eq:evolution-eqn-gamma} (or, equivalently, \eqref{eq:evolution-eqn-gamma-strain}) remain largely unnoticed in the literature. In general, strain-driven formulations have been advocated to be superior to the stress-driven ones, especially in the computation of plasticity \cite{Simo2006}. One may conveniently devise an analogous recurrence update formula for $\bm \Gamma^{\alpha}$. Instead of doing that, we resort to a different constitutive integration approach that preserves physical features in this study.

\subsection{Governing equations}
We consider a non-overlapping subdivision of the boundary $\Gamma_{\bm X} := \partial \Omega_{\bm X}$ into $\Gamma^{G}_{\bm X} \cup \Gamma^{H}_{\bm X} = \Gamma_{\bm X}$. The system of equations governing the motion of the body posed on the initial configuration is stated as follows,
\begin{align}
\label{eq:strong-kinematics}
& \frac{d\bm U}{dt} - \bm V = \bm 0, && \mbox{ in } \Omega_{\bm X} \times (0,T), \displaybreak[2] \\
\label{eq:strong-mass}
& J \beta_{\theta}(P) \frac{dP}{dt} + \nabla_{\bm X} \bm V : (J \bm F^{-T}) = 0, && \mbox{ in } \Omega_{\bm X} \times (0,T), \displaybreak[2] \\
\label{eq:strong-momentum}
& \rho_0 \frac{d\bm V}{dt} - \nabla_{\bm X} \cdot \bm P_{\mathrm{iso}} + \nabla_{\bm X} \cdot (JP\bm F^{-T}) - \rho_0 \bm B = \bm 0, && \mbox{ in } \Omega_{\bm X} \times (0,T), \displaybreak[2] \\
& \bm U = \bm G \quad \mbox{ and }  \quad \bm V = \frac{\partial \bm G}{\partial t}, && \mbox{ on } \Gamma^{G}_{\bm X} \times (0,T), \\
& \bm P \bm N = \bm H, && \mbox{ on } \Gamma^{H}_{\bm X} \times (0,T), \displaybreak[2] \\
\label{eq:strong-initial}
& \bm U(\bm x, 0) = \bm U_0(\bm x), \quad P(\bm x, 0) = P_0(\bm x), \quad \bm V(\bm x, 0) = \bm V_0(\bm x), && \mbox{ in } \Omega_{\bm X}.
\end{align}
In the above, $\bm G$ is the prescribed boundary displacement defined on $\bm \Gamma^{G}_{\bm X}$, $\bm H$ is the prescribed traction on $\bm \Gamma_{\bm X}^{H}$, $\bm U_0$, $\bm V_0$, and $P_0$ are the initial displacement, velocity, and pressure data, respectively. In the incompressible limit, the volumetric part of the free energy becomes $G_{\mathrm{vol}} = P$ and $\beta_{\theta}$ is zero, resulting in a divergence-free constraint on the velocity field. In the following, we will restrict our discussion to the fully incompressible scenario.

\section{Semi-discrete formulation}
\label{subsec:semi-discrete}
In this section, we perform spatial discretization for the strong-form problem \eqref{eq:strong-kinematics}-\eqref{eq:strong-initial} using a smooth generalization of the Taylor-Hood element based on Non-Uniform Rational B-Splines (NURBS). For a single-patch geometry, we may construct the spline spaces based on a parametric domain $\hat{\Omega} := (0,1)^3$. A set of open knot vectors $\Xi_d$, $d=1,2,3$, is provided as a Cartesian mesh for $\hat{\Omega}$. With a set of weights, the space of the multivariate NURBS functions can be determined and is denoted by $\mathcal R^{\mathsf p_1, \mathsf p_2, \mathsf p_3}_{ \bm \alpha_1, \bm \alpha_2, \bm \alpha_3}$, where $\mathsf p_d$ and $\bm \alpha_d$ represent the degree and interelement regularity in the $d$-th parametric direction. The length of $\bm \alpha_d$ equals $\mathsf m_d$, the number of unique knots in $\Xi_d$. The values of $\mathsf p_d$ and $\bm \alpha_d$ are completely determined by the knot vectors $\Xi_d$. For an open-knot vector $\Xi_d$, the interelement regularity vector $\bm \alpha_d := \{\alpha_1, \alpha_2, \cdots, \alpha_{\mathsf m_d-1}, \alpha_{\mathsf m_d}\}$ has $\alpha_1 = \alpha_{\mathsf m_d} = -1$.  A detailed description on the construction of the function space $\mathcal R^{\mathsf p_1, \mathsf p_2, \mathsf p_3}_{ \alpha_1, \alpha_2, \alpha_3}$ can be found in \cite{Buffa2011}. In this work, we consider the following two NURBS function spaces,
\begin{align*}
\hat{\mathcal S}_h :=
\mathcal R^{\mathsf p+\mathsf a, \mathsf p+\mathsf a, \mathsf p+\mathsf a}_{\bm \alpha_1+\mathsf b, \bm \alpha_2+\mathsf b, \bm \alpha_3+\mathsf b} \times \mathcal R^{\mathsf p+\mathsf a, \mathsf p+\mathsf a, \mathsf p+\mathsf a}_{\bm \alpha_1+\mathsf b, \bm \alpha_2+\mathsf b, \bm \alpha_3+\mathsf b} \times \mathcal R^{\mathsf p+\mathsf a, \mathsf p+\mathsf a, \mathsf p+\mathsf a}_{\bm \alpha_1+\mathsf b, \bm \alpha_2+\mathsf b, \bm \alpha_3+\mathsf b}, \quad
\hat{\mathcal P}_h := \mathcal R^{\mathsf p, \mathsf p, \mathsf p}_{\bm \alpha_1, \bm \alpha_2, \bm \alpha_3},
\end{align*}
where $\bm \alpha_d + \mathsf b:= \{\alpha_1, \alpha_2+\mathsf b, \cdots, \alpha_{\mathsf m_d-1} +\mathsf b, \alpha_{\mathsf m_d}\}$. We demand that the integer parameters $\mathsf a$ and $\mathsf b$ satisfy $1 \leq \mathsf a $ and $0 \leq \mathsf b < \mathsf a$. Given a set of knot vectors, the space $\hat{\mathcal P}_h$ is generated first with a uniform polynomial degree $\mathsf p$ in all three directions. In this work, unless otherwise specified, the space $\hat{\mathcal P}_h$ is generated with the maximum possible continuity. Based on $\hat{\mathcal P}_h$, two steps of refinement are performed to construct $\hat{\mathcal S}_h$. One first performs a $k$-refinement to raise the polynomial degree and continuity by $\mathsf b$ and then performs a $p$-refinement to raise the polynomial degree to $\mathsf p + \mathsf a$ while maintaining the continuity. Numerical tests reveal that $\mathsf b + 1 \leq \mathsf a$ is necessary for the inf-sup stability of the above function spaces \cite{Liu2019a}, meaning the $p$-refinement is necessary for generating stable element pairs. 

Given a smooth geometrical mapping $\bm \psi : \hat{\Omega} \rightarrow \Omega_{\bm X}$ with piecewise smooth inverse, the discrete function spaces on $\Omega_{\bm X}$ can be defined through the pull-back operation,
\begin{align*}
& \mathcal S_h :=  \{ \bm w : \bm w \circ \bm \psi \in \hat{\mathcal S}_h \}, \quad \mathcal P_h := \{ q : q \circ \bm \psi \in \hat{\mathcal P}_h \}.
\end{align*}
In this work, the mapping $\bm \psi$ is constructed by functions in $\hat{\mathcal S}_h$, rendering $\mathcal S_h$ to be an \textit{isoparametric} discrete space. The trial solution spaces for the displacement, pressure, and velocity can be specified as
\begin{align*}
\mathcal S_{\bm U_h} &= \Big\lbrace \bm U_h : \bm U_h(\cdot, t) \in \mathcal S_h, \quad \bm U_h(\cdot,t) = \bm G \mbox{ on } \Gamma_{\bm X}^{G}, \quad t \in [0,T]  \Big\rbrace , \displaybreak[2] \\
\mathcal S_{P_h} &= \Big\lbrace P_h : P_h(\cdot, t) \in \mathcal P_h, \quad t \in [0,T] \Big\rbrace , \displaybreak[2] \\
 \mathcal S_{\bm V_h} &= \left\lbrace \bm V_h : \bm V_h(\cdot, t) \in \mathcal S_h, \quad \bm V_h(\cdot,t) = \frac{d\bm G}{dt} \mbox{ on } \Gamma_{\bm X}^{G}, \quad t \in [0,T] \right\rbrace ,
\end{align*}
and the test function spaces are given by
\begin{align*}
\mathcal V_{P_h} &=\mathcal S_{P_h} \quad
\mathcal V_{\bm V_h} = \left\lbrace \bm W_h : \bm W_h(\cdot, t) \in \mathcal S_h, \quad \bm W_h(\cdot,t) = \bm 0 \mbox{ on } \Gamma_{\bm X}^{G}, \quad t \in [0,T] \right\rbrace.
\end{align*}
Given a displacement $\bm U_h \in \mathcal S_{\bm U_h}$, the placement field is given by $\bm \varphi_{t,h} = \bm U_h + \bm X$.  With the above discrete function spaces defined, the semi-discrete formulation on the referential configuration can be stated as follows. Find $\bm Y_h(t) := \left\lbrace \bm U_h(t), P_h(t), \bm V_h(t)\right\rbrace^T \in \mathcal S_{\bm U_h} \times \mathcal S_{P_h} \times \mathcal S_{\bm V_h}$ such that
\begin{align}
\label{eq:mix_solids_kinematics_ref}
& \bm 0 = \mathbf B^k\left( \dot{\bm Y}_h, \bm Y_h \right) :=  \frac{d\bm U_h}{dt} - \bm V_h, \displaybreak[2]\\
\label{eq:mix_solids_mass_ref}
& 0 = \mathbf B^p\left( Q_h; \dot{\bm Y}_h, \bm Y_h \right) := \int_{\Omega_{\bm X}} Q_h J_h \nabla_{\bm X} \bm V_h : \bm F^{-T}_h d\Omega_{\bm X}, \displaybreak[2] \\
\label{eq:mix_solids_momentum_ref}
& 0 = \mathbf B^m\left( \bm W_h; \dot{\bm Y}_h, \bm Y_h \right) := \int_{\Omega_{\bm X}} \Big( \bm W_h \cdot \rho_0 \frac{d\bm V_h}{dt} + \left( \bm F^T_h \nabla_{\bm X} \bm W_h \right) :  \bm S_{\mathrm{iso}} - J_h P_h \nabla_{\bm X} \bm W_h : \bm F^{-T}_h - \bm W_h \cdot \rho_0  \bm B \Big)  d\Omega_{\bm X} \displaybreak[2] \nonumber \\
& \hspace{3.9cm} - \int_{\Gamma_{\bm X}^{H}} \bm W_h \cdot \bm H d\Gamma_{\bm X} + \int_{\Omega_{\bm X}} \gamma J_h \left( \nabla_{\bm X} \bm W_h : \bm F^{-T}_{h} \right)\left( \nabla_{\bm X} \bm V_h : \bm F^{-T}_{h} \right)d\Omega_{\bm X}, 
\end{align}
for $\forall \left\lbrace  Q_h, \bm W_h \right\rbrace \in \mathcal V_{P_h} \times \mathcal V_{\bm V_h}$, with $\bm Y_h(0) = \left\lbrace \bm U_{h0}, P_{h0}, \bm V_{h0} \right\rbrace^T$. The initial data are obtained through the $\mathcal L^2$-projection of the initial data onto the discrete trial solution spaces. The last term in \eqref{eq:mix_solids_momentum_ref} involves a parameter $\gamma \geq 0$ and introduces the grad-div stabilization. The parameter $\gamma$ in general varies as a function of $\bm X$ and $t$ \cite{Olshanskii2009}. 

If the boundary data $\bm G$ is time-independent, one may set $\bm W_h = \bm V_h$ and $Q_h = P_h$ in \eqref{eq:mix_solids_mass_ref} and \eqref{eq:mix_solids_momentum_ref}, leading to \cite[p.~19]{Liu2021}
\begin{align}
\label{eq:semi-discrete-energy-stability}
&\frac{d}{dt} \int_{\Omega_{\bm X}} \rho_0 \|\bm V_h\|^2 + G(\tilde{\bm C}, \bm \Gamma^1, \cdots, \bm \Gamma^m) d\Omega_{\bm X} = \int_{\Omega_{\bm X}} \rho_0 \bm V_h \cdot \bm B d\Omega_{\bm X} + \int_{\Gamma_{\bm X}^{H}} \bm V_h \cdot \bm H d\Gamma_{\bm X} \displaybreak[2] \nonumber \\
& \qquad - \sum_{\alpha=1}^{m} \int_{\Omega_{\bm X}}\frac14 \left(\frac{d}{dt}\bm \Gamma^{\alpha}_h \right) : \mathbb V^{\alpha} : \left(\frac{d}{dt}\bm \Gamma^{\alpha}_h \right) d\Omega_{\bm X} - \int_{\Omega_{\bm X}} \gamma \left( \nabla_{\bm X} \bm V_h : \bm F^{-T}_{h} \right)^2 d\Omega_{\bm X}.
\end{align}
The above is known as the theorem of expended power or the a priori energy stability of the weak (semi-discrete) formulation. The positive semi-definiteness of the tensors $\mathbb V^{\alpha}$ guarantees that
\begin{align*}
\int_{\Omega_{\bm X}} \frac14 \left(\frac{d}{dt}\bm \Gamma^{\alpha}_h \right) : \mathbb V^{\alpha} : \left(\frac{d}{dt}\bm \Gamma^{\alpha}_h \right) d\Omega_{\bm X} = \int_{\Omega_{\bm X}} \bm Q^{\alpha} : \left( \mathbb V^{\alpha} \right)^{-1} : \bm Q^{\alpha} d\Omega_{\bm X} \geq 0, \quad \mbox{ for } 1 \leq \alpha \leq m,
\end{align*}
which represents the dissipation due to the inelastic deformation. The last term in \eqref{eq:semi-discrete-energy-stability} is a numerical dissipation term due to the fact that the grad-div stabilization parameter $\gamma$ is non-negative by design.

If $\Gamma^{G}_{\bm X} = \emptyset$, the semi-discrete formulation enjoys the following property \cite{Liu2021},
\begin{align}
\frac{d}{dt}\bm L_h :=& \frac{d}{dt} \int_{\Omega_{\bm X}} \rho_0 \bm V_h d\Omega_{\bm X} = \int_{\Omega_{\bm X}} \rho_0 \bm B d\Omega_{\bm X} + \int_{\Gamma_{\bm X}^{H}} \bm H d\Gamma_{\bm X}, \\
\label{eq:semi-discrete-angular-momentum-conservation}
\frac{d}{dt}\bm L_h :=& \frac{d}{dt} \int_{\Omega_{\bm X}} \rho_0 \bm \varphi_{t~h} \times \bm V_h d\Omega_{\bm X} = \int_{\Omega_{\bm X}} \rho_0 \bm \varphi_{t~h} \times \bm B d\Omega_{\bm X} + \int_{\Gamma_{\bm X}^{H}} \bm \varphi_{t~h} \times \bm H d\Gamma_{\bm X}.
\end{align}
The above gives the conservation of momenta in the semi-discrete formulation. In particular, we mention that the grad-div stabilization term does not harm the momentum conservation. The primary objective of this work is to design integration schemes that inherit the above properties to the fully discrete level, and we will focus on the design strategies in the next section.

\begin{remark}
The grad-div stabilization term was originally introduced in the context of computational fluid dynamics as a sub-grid scale model for pressure \cite{Olshanskii2009}. It renders the solution of the Taylor-Hood element approaching that of the Scott-Vogelius element \cite{Scott1985}, an element generating discrete divergence-free velocity, as the stabilization parameter $\gamma$ goes to infinity \cite{Case2011}. Therefore, it is an appealing technique for enhancing the discrete mass conservation in flow and transport problems \cite{Colomes2016,John2010,Olshanskii2002}. In this study, the grad-div stabilization is introduced with the expectation of enhancing the performance of the smooth generalization of the Taylor-Hood element built based on NURBS. As can be seen from the above propositions, this term does not harm the momentum conservation property and is numerically dissipative in energy. Also, due to the analysis made in \cite{Case2011}, the dissipation due to the grad-div stabilization (i.e., the last term in \eqref{eq:semi-discrete-energy-stability}) annihilates as $\gamma$ goes to infinity, at least for the Navier-Stokes equations. In practice, the parameter $\gamma$ cannot be chosen arbitrarily large as over-stabilization results in difficulty in achieving convergence in the solver. 
\end{remark}

\section{Time integration}
\label{sec:time-integration}
We construct temporal schemes based on the semi-discrete formulation \eqref{eq:mix_solids_kinematics_ref}-\eqref{eq:mix_solids_momentum_ref} with the primary goal of inheriting the properties \eqref{eq:semi-discrete-energy-stability}-\eqref{eq:semi-discrete-angular-momentum-conservation} to the fully discrete level. To simplify the notation, we will henceforth neglect the subscript $h$ used for spatially discrete quantities. We consider a discrete time vector $\left\lbrace t_n \right\rbrace_{n=0}^{n_{\mathrm{ts}}}$ that delimits the time interval $(0,T)$ into a set of $n_{\mathrm{ts}}$ subintervals of size $\Delta t_n := t_{n+1} - t_{n} > 0$. The solution vector $\bm Y_n := \left\lbrace  \bm U_n, P_n, \bm V_n\right\rbrace^T$ represents the discrete approximations at time $t_n$ to the semi-discrete solution $\bm Y_h(t)$; the approximation of the internal state variables $\bm \Gamma^{\alpha}(t)$ at time $t_n$ are denoted as $\bm \Gamma^{\alpha}_n$. We additionally introduce the following notations,
\begin{gather*}
\bm F_n := \nabla_{\bm X} \bm U_{n} + \bm I, \quad J_n := \mathrm{det}\left( \bm F_n \right), \quad \bm C_n := \bm F^{T}_n \bm F_n, \quad \tilde{\bm C}_n := J^{-\frac23}_n \bm C_n, \quad \bm \Gamma^{\alpha}_{n+\frac12} := \frac12 \left( \bm \Gamma^{\alpha}_{n+1} + \bm \Gamma^{\alpha}_{n} \right), \displaybreak[2] \\
\bm Z_n = \bm C_{n+1} - \bm C_{n}, \quad H_n := \int_{\Omega_{\bm X}} \frac{\rho_0 \|\bm V_{n}\|^2}{2} + G_{\mathrm{iso}}\left( \tilde{\bm C}_{n}, \bm \Gamma^1_n, \cdots \bm \Gamma^{m}_{n} \right) d\Omega_{\bm X}.
\end{gather*}
Here we consider the following single-step implicit scheme,
\begin{align}
\label{eq:em-ts-kinematic}
& \bm 0 = \frac{\bm U_{n+1} - \bm U_{n}}{\Delta t_n} - \bm V_{n+\frac12}, \displaybreak[2] \\
\label{eq:em-ts-mass}
& 0 = \int_{\Omega_{\bm X}} Q J_{n+\frac12} \nabla_{\bm X} \bm V_{n+\frac12} : \bm F^{-T}_{n+\frac12} d\Omega_{\bm X}, \displaybreak[2] \\
\label{eq:em-ts-momentum}
& 0 = \int_{\Omega_{\bm X}} \bm W \cdot \rho_0 \frac{\bm V_{n+1} - \bm V_n}{\Delta t_n} + \left( \bm F^T_{n+\frac12}\nabla_{\bm X} \bm W \right) : \bm S_{\mathrm{iso} \: \mathrm{alg}} - J_{n+\frac12} P_{n+\frac12} \nabla_{\bm X} \bm W : \bm F^{-T}_{n+\frac12} - \bm W \cdot \rho_0 \bm B_{n+\frac12} d\Omega_{\bm X}  \nonumber \displaybreak[2] \\
& \qquad - \int_{\Gamma_{\bm X}^H} \bm W \cdot \bm H_{n+\frac12} d\Gamma_{\bm X} + \int_{\Omega_{\bm X}} \gamma J_{n+\frac12} \left(\nabla \bm_{\bm X} W : \bm F^{-T}_{n+\frac12} \right) \left(\nabla_{\bm X} \bm V_{n+\frac12} : \bm F^{-T}_{n+\frac12} \right) d\Omega_{\bm X},
\end{align}
where 
\begin{gather}
 \left\lbrace  \bm U_{n+\frac12}, P_{n+\frac12}, \bm V_{n+\frac12} \right\rbrace^T :=  \frac12 \left\lbrace \bm U_n + \bm U_{n+1}, P_n + P_{n+1}, \bm V_n + \bm V_{n+1} \right\rbrace^T, \displaybreak[2] \\
 \bm F_{n+\frac12} := \nabla_{\bm X} \bm U_{n+\frac12} + \bm I, \quad J_{n+\frac12} := \mathrm{det}\left( \bm F_{n+\frac12} \right), \displaybreak[2] \\
 \bm C_{n+\frac12} := \frac12 \left( \bm C_{n+1} + \bm C_n \right), \qquad \tilde{\bm C}_{n+\frac12} := J^{-\frac23}_{n+\frac12} \bm C_{n+\frac12}, \displaybreak[2] \\
\label{eq:em-ts-S-alg-def}
 \bm S_{\mathrm{iso} \: \mathrm{alg}} := \bm S_{\mathrm{iso} \: n+\frac12} + \bm S_{\mathrm{iso} \: \mathrm{enh}}, \quad \bm S_{\mathrm{iso} \: n+\frac12} := \bm S_{\mathrm{iso}}\left( \bm C_{n+\frac12}, \bm \Gamma^{1}_{n+\frac12}, \cdots, \bm \Gamma^{m}_{n+\frac12} \right).
\end{gather}
Notice that $\bm C_{n+\frac12} \neq \bm F^{T}_{n+\frac12} \bm F_{n+\frac12}$ in general, and adopting $\bm C_{n+\frac12}$ in the evaluation of $\bm S_{\mathrm{iso} \: n+\frac12}$ is known to preserve the exact relative equilibria \cite{Armero2001,Gonzalez1996,Romero2012}. The discrete scheme \eqref{eq:em-ts-kinematic}-\eqref{eq:em-ts-S-alg-def} is complete once the algorithmic stress $\bm S_{\mathrm{iso} \: \mathrm{alg}}$ is explicitly given. The design of the algorithmic stress here is the discrete stress $\bm S_{\mathrm{iso} \: n+\frac12}$ additively perturbed by a ``stress enhancement" $\bm S_{\mathrm{iso} \: \mathrm{enh}}$, following the terminology of \cite{Mohr2008}. This general formula \eqref{eq:em-ts-S-alg-def} incorporates several widespread algorithmic stress designs in elastodynamics, including the discrete gradient formula \cite{Gonzalez2000} and several more involved formulas \cite{Armero2007,Romero2012}. Yet, there are other options for the algorithmic stress, which do not take the form \eqref{eq:em-ts-S-alg-def}. The scaled mid-point formula \cite{Chorin1978} is designed based on a multiplicative perturbation and maintains the consistency in energy and momentum \cite{Bui2007,Gonzalez1996,Orden2021}. However, the numerical robustness of the scaled mid-point formula has been questioned recently \cite{Guan2023,Romero2012}. In the following, we discuss a set of general design criteria that guarantee the scheme to be energy-momentum consistent for the viscoelasticity models considered in this work.

\begin{lemma}
\label{lemma:ts-energy-dissipation}
Assuming the boundary data $\bm G$ is time-independent, and the algorithmic stress satisfies the following relation, 
\begin{align}
\label{eq:directionality-property}
\bm Z_n : \bm S_{\mathrm{iso} \: \mathrm{alg}} =& G_{\mathrm{iso}}(\tilde{\bm C}_{n+1}, \bm \Gamma^{1}_{n+1}, \cdots, \bm \Gamma^{m}_{n+1}) - G_{\mathrm{iso}}(\tilde{\bm C}_{n}, \bm \Gamma^{1}_{n}, \cdots, \bm \Gamma^{m}_{n}) + \frac{\Delta t_n}{2} \sum_{\alpha=1}^{m} \eta^{\alpha} \left\lvert \frac{ \bm \Gamma^{\alpha}_{n+1} - \bm \Gamma^{\alpha}_n}{\Delta t_n} \right\rvert^2,
\end{align}
with $\bm Z_n := \left( \bm C_{n+1} - \bm C_n \right)/2$, the fully-discrete scheme \eqref{eq:em-ts-kinematic}-\eqref{eq:em-ts-momentum} enjoys the following energy stability property,
\begin{align}
\label{eq:em-prop-1}
\frac{1}{\Delta t_n} \left( H_{n+1} - H_n \right) =& \mathcal P_{\mathrm{ext}} - \mathcal D_{\mathrm{phy}} - \mathcal D_{\mathrm{num}},
\end{align}
with the power of external loadings $\mathcal P_{\mathrm{ext}}$, the physical dissipation $\mathcal D_{\mathrm{phy}}$, and the numerical dissipation $\mathcal D_{\mathrm{num}}$ given by
\begin{align*}
&\mathcal P_{\mathrm{ext}} := \int_{\Omega_{\bm X}} \rho_0 \bm V_{n+\frac12} \cdot \bm B d\Omega_{\bm X} + \int_{\Gamma_{\bm X}^H} \bm V_{n+\frac12} \cdot \bm H d\Gamma_{\bm X}, \displaybreak[2] \\  
& \mathcal D_{\mathrm{phy}} := \frac12 \sum_{\alpha=1}^{m} \int_{\Omega_{\bm X}} \eta^{\alpha} \left\lvert \frac{ \bm \Gamma^{\alpha}_{n+1} - \bm \Gamma^{\alpha}_n}{\Delta t_n} \right\rvert^2 d\Omega_{\bm X}, \displaybreak[2] \\
& \mathcal D_{\mathrm{num}} := \int_{\Omega_{\bm X}} \gamma J_{n+\frac12} \left(\nabla_{\bm X} \bm V_{n+\frac12} : \bm F^{-T}_{n+\frac12} \right)^2 d\Omega_{\bm X}.
\end{align*}
\end{lemma}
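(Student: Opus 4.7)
The plan is to mimic the semi-discrete energy argument that led to \eqref{eq:semi-discrete-energy-stability}, but now at the fully-discrete level. The three governing equations \eqref{eq:em-ts-kinematic}--\eqref{eq:em-ts-momentum} will be tested by the natural discrete choices of test and trial velocities/pressures, namely $\bm W = \bm V_{n+\frac12}$ in the momentum equation and $Q = P_{n+\frac12}$ in the mass equation. Since the Dirichlet datum $\bm G$ is time-independent, $\bm V_n$ and $\bm V_{n+1}$ both vanish on $\Gamma^{G}_{\bm X}$, so $\bm V_{n+\frac12} \in \mathcal V_{\bm V_h}$ is an admissible test function.

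First I would handle the inertial term: the discrete kinematic relation \eqref{eq:em-ts-kinematic} gives $\bm V_{n+\frac12}\cdot(\bm V_{n+1}-\bm V_n) = \tfrac12\left(|\bm V_{n+1}|^2 - |\bm V_n|^2\right)$, producing the discrete kinetic-energy increment. Next I would observe that substituting $Q = P_{n+\frac12}$ in \eqref{eq:em-ts-mass} makes the pressure term $\int J_{n+\frac12} P_{n+\frac12}\nabla_{\bm X}\bm V_{n+\frac12}:\bm F_{n+\frac12}^{-T}\, d\Omega_{\bm X}$ in the momentum equation vanish identically, paralleling the incompressibility cancellation at the continuum level. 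With $\bm W = \bm V_{n+\frac12}$, the grad-div term immediately becomes the asserted $\mathcal{D}_{\mathrm{num}}$, and the body-force plus traction contributions are exactly $\mathcal{P}_{\mathrm{ext}}$.

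The key algebraic step is to rewrite the stress power $(\bm F_{n+\frac12}^T\nabla_{\bm X}\bm V_{n+\frac12}):\bm S_{\mathrm{iso}\:\mathrm{alg}}$ in a form the directionality hypothesis \eqref{eq:directionality-property} can act on. Using the kinematic equation, $\nabla_{\bm X}\bm V_{n+\frac12} = (\bm F_{n+1}-\bm F_n)/\Delta t_n$. Because $\bm S_{\mathrm{iso}\:\mathrm{alg}}$ is symmetric, only the symmetric part of $\bm F_{n+\frac12}^T(\bm F_{n+1}-\bm F_n)$ survives in the double contraction, and a direct expansion using $\bm F_{n+\frac12} = (\bm F_{n+1}+\bm F_n)/2$ yields
\begin{align*}
\bm F_{n+\frac12}^T(\bm F_{n+1}-\bm F_n) + (\bm F_{n+1}-\bm F_n)^T\bm F_{n+\frac12} = \bm C_{n+1}-\bm C_n = 2\bm Z_n.
\end{align*}
Hence $(\bm F_{n+\frac12}^T\nabla_{\bm X}\bm V_{n+\frac12}):\bm S_{\mathrm{iso}\:\mathrm{alg}} = \bm Z_n:\bm S_{\mathrm{iso}\:\mathrm{alg}}/\Delta t_n$, and now the directionality hypothesis \eqref{eq:directionality-property} converts this into the free-energy increment plus exactly the physical dissipation $\mathcal D_{\mathrm{phy}}\cdot \Delta t_n$. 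Combining the kinetic and potential increments produces $(H_{n+1}-H_n)/\Delta t_n$, and rearranging yields \eqref{eq:em-prop-1}.

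The main obstacle is the identity relating the symmetric part of $\bm F_{n+\frac12}^T(\bm F_{n+1}-\bm F_n)$ to $\bm Z_n$; without it, the hypothesis \eqref{eq:directionality-property} could not be invoked, and the mismatch between the mid-point deformation gradient $\bm F_{n+\frac12}$ appearing in the momentum balance and the averaged Cauchy-Green tensor $\bm C_{n+\frac12}$ appearing in the stress (noted after \eqref{eq:em-ts-S-alg-def}) would leave a spurious residual. Everything else — the pressure cancellation, the kinetic telescoping, and the grad-div rewriting — is routine once this algebraic equivalence is in place.
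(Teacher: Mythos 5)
Your proposal is correct and follows essentially the same route as the paper's own proof: testing with $\bm W = \bm V_{n+\frac12}$ and $Q = P_{n+\frac12}$, telescoping the kinetic energy, cancelling the pressure term, rewriting the stress power as $\bm Z_n : \bm S_{\mathrm{iso}\:\mathrm{alg}}/\Delta t_n$ via the kinematic relation together with the symmetry of the algorithmic stress, and then invoking the directionality condition \eqref{eq:directionality-property}. The only minor slip is attributing the identity $\bm V_{n+\frac12}\cdot\left(\bm V_{n+1}-\bm V_n\right) = \tfrac12\left(\lvert\bm V_{n+1}\rvert^2-\lvert\bm V_n\rvert^2\right)$ to \eqref{eq:em-ts-kinematic}, whereas it follows from the definition of $\bm V_{n+\frac12}$ as the average; this does not affect the validity of the argument.
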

\begin{proof}
Since the boundary data $\bm G$ is time-independent, the function spaces $\mathcal S_{\bm V_h}$ and $\mathcal V_{\bm V_h}$ become identical. It is legitimate to choose $\bm W = \bm V_{n+\frac12}$ in \eqref{eq:em-ts-momentum}, which results in
\begin{align}
\label{eq:em-prop-2}
0 =& \int_{\Omega_{\bm X}} \bm V_{n+\frac12} \cdot \rho_0 \frac{\bm V_{n+1} - \bm V_n}{\Delta t_n} + \left( \bm F^T_{n+\frac12}\nabla_{\bm X} \bm V_{n+\frac12} \right) : \bm S_{\mathrm{iso} \: \mathrm{alg}} - J_{n+\frac12} P_{n+\frac12} \nabla_{\bm X} \bm V_{n+\frac12} : \bm F^{-T}_{n+\frac12}  \nonumber \displaybreak[2] \\
& - \bm V_{n+\frac12} \cdot \rho_0 \bm B d\Omega_{\bm X}  - \int_{\Gamma_{\bm X}^H} \bm V_{n+\frac12} \cdot \bm H d\Gamma_{\bm X} + \int_{\Omega_{\bm X}} \gamma J_{n+\frac12} \left(\nabla \bm V_{n+\frac12} : \bm F^{-T}_{n+\frac12} \right)^2 d\Omega_{\bm X}.
\end{align}
The first term in the above results in the jump of the kinetic energy,
\begin{align*}
\int_{\Omega_{\bm X}} \bm V_{n+\frac12} \cdot \rho_0 \frac{\bm V_{n+1} - \bm V_n}{\Delta t_n} d\Omega_{\bm X} = \int_{\Omega_{\bm X}} \rho_0 \frac{|\bm V_{n+1}|^2 - |\bm V_n|^2}{2 \Delta t_n} d\Omega_{\bm X}.
\end{align*}
As for the second term of \eqref{eq:em-prop-2}, we make use of the kinematic relation \eqref{eq:em-ts-kinematic} and get
\begin{align*}
& \int_{\Omega_{\bm X}} \left( \bm F^{T}_{n+\frac12} \nabla_{\bm X} \bm V_{n+\frac12} \right) : \bm S_{\mathrm{iso} \: \mathrm{alg}} d\Omega_{\bm X} = \frac{1}{\Delta t_n} \int_{\Omega_{\bm X}} \left( \bm F^{T}_{n+\frac12} \left( \bm F_{n+1} - \bm F_n \right) \right) : \bm S_{\mathrm{iso} \: \mathrm{alg}} d\Omega_{\bm X} \displaybreak[2] \\
=& \frac{1}{\Delta t_n} \int_{\Omega_{\bm X}} \frac12 \left( \bm F^{T}_{n+1} \bm F_{n+1} - \bm F^T_n \bm F_n + \bm F^T_n \bm F_{n+1} - \bm F^T_{n+1} \bm F_n \right) : \bm S_{\mathrm{iso} \: \mathrm{alg}} d\Omega_{\bm X} \displaybreak[2] \\
=& \frac{1}{\Delta t_n} \int_{\Omega_{\bm X}} \bm Z_n : \bm S_{\mathrm{iso} \: \mathrm{alg}} d\Omega_{\bm X} \displaybreak[2] \\
=& \frac{1}{\Delta t_n} \int_{\Omega_{\bm X}} \left( G_{\mathrm{iso}}(\tilde{\bm C}_{n+1}, \bm \Gamma^1_{n+1}, \cdots, \bm \Gamma^{m}_{n+1}) - G_{\mathrm{iso}}(\tilde{\bm C}_{n}, \bm \Gamma^1_{n}, \cdots, \bm \Gamma^{m}_{n}) \right) d\Omega_{\bm X} + \frac12 \sum_{\alpha=1}^{m} \int_{\Omega_{\bm X}} \eta^{\alpha} \left\lvert \frac{ \bm \Gamma^{\alpha}_{n+1} - \bm \Gamma^{\alpha}_n}{\Delta t_n} \right\rvert^2 d\Omega_{\bm X}.
\end{align*}
The last equality in the above results from the relation \eqref{eq:directionality-property}.
Next, we choose $Q = P_{n+\frac12}$ in \eqref{eq:em-ts-mass}, which cancels out the third term in \eqref{eq:em-prop-2}. With the above derivations, the relation \eqref{eq:em-prop-2} can be reorganized and leads to the following,
\begin{align*}
\frac{1}{\Delta t_n}\left( H_{n+1} - H_n \right) = \int_{\Omega_{\bm X}} \rho_0 \bm V_{n+\frac12} \cdot \bm B d\Omega_{\bm X} + \int_{\Gamma_{\bm X}^H} \bm V_{n+\frac12} \cdot \bm H d\Gamma_{\bm X} &- \frac12 \sum_{\alpha=1}^{m} \int_{\Omega_{\bm X}} \eta^{\alpha} \left\lvert \frac{ \bm \Gamma^{\alpha}_{n+1} - \bm \Gamma^{\alpha}_n}{\Delta t_n} \right\rvert^2 d\Omega_{\bm X} \displaybreak[2] \\
&- \int_{\Omega_{\bm X}} \gamma J_{n+\frac12} \left(\nabla \bm V_{n+\frac12} : \bm F^{-T}_{n+\frac12} \right)^2 d\Omega_{\bm X},
\end{align*}
which completes the proof.
\end{proof}

\begin{remark}
The grad-div stabilization can be turned off by setting $\gamma = 0$. Consequently, the relation \eqref{eq:em-prop-1} indicates the dissipation of the solution precisely equals the physical dissipation $\mathcal D_{\mathrm{phy}}$.
\end{remark}

\begin{remark}
The condition \eqref{eq:directionality-property} is the discrete analogue of the continuum relation
\begin{align*}
\frac{dG_{\mathrm{iso}}}{dt} = \bm S_{\mathrm{iso}} : \frac{d\bm C}{dt} - \sum_{\alpha=1}^{m} \frac{\bm Q^{\alpha}}{2} : \frac{d\bm \Gamma^{\alpha}}{dt} = \bm S_{\mathrm{iso}} : \frac{d\bm C}{dt} - \frac12 \sum_{\alpha=1}^{m} \eta^{\alpha} \left\lvert  \frac{d \bm \Gamma^{\alpha}}{dt} \right\rvert^2.
\end{align*}
It is an extension of the directionality condition for elastodynamics \cite{Romero2012} by taking the physical dissipation into account.
\end{remark}

\begin{remark}
Following the discrete gradient formula \cite{Gonzalez2000}, one design of the stress enhancement is given as
\begin{align*}
\bm S_{\mathrm{iso} \: \mathrm{enh}} =& \Bigg( G_{\mathrm{iso}}(\tilde{\bm C}_{n+1}, \bm \Gamma^{1}_{n+1}, \cdots, \bm \Gamma^{m}_{n+1}) - G_{\mathrm{iso}}(\tilde{\bm C}_{n}, \bm \Gamma^{1}_{n}, \cdots, \bm \Gamma^{m}_{n}) + \frac{\Delta t_n}{2} \sum_{\alpha=1}^{m} \eta^{\alpha} \left\lvert \frac{ \bm \Gamma^{\alpha}_{n+1} - \bm \Gamma^{\alpha}_n}{\Delta t_n} \right\rvert^2 \displaybreak[2] \\
& - \bm S_{\mathrm{iso} \: n+\frac12}:\bm Z_n \Bigg) \frac{\bm Z_n}{\left\lvert\bm Z_n \right\rvert^2}.
\end{align*}
It is straightforward to verify that the above stress enhancement satisfies the directionality condition \eqref{eq:directionality-property}. Yet, for inelastic problems, the formula is incomplete without the discrete evolution equation for the internal state variables $\lbrace \bm \Gamma^{\alpha}_{n+1} \rbrace_{\alpha=1}^{m}$. The discrete evolution equation impacts the accuracy of the algorithmic stress and thus requires thoughtful studies.
\end{remark}

The conservation of the angular momentum demands the algorithmic stress to be symmetric. We summarize this design criterion in the following proposition.
\begin{lemma}
\label{lemma:ts-momentum-conservation}
Assuming $\Gamma^{G}_{\bm X} = \emptyset$ and the algorithmic stress $\bm S_{\mathrm{iso} \: \mathrm{alg}}$ is symmetric, the linear and angular momenta are conserved in the following sense,
\begin{align*}
& \frac{1}{\Delta t_n} \left( \bm L_{n+1} - \bm L_n \right) = \int_{\Omega_{\bm X}} \rho_0 \bm B d\Omega_{\bm X} + \int_{\Gamma_{\bm X}} \bm H d\Gamma_{\bm X}, \\
& \frac{1}{\Delta t_n} \left( \bm J_{n+1} - \bm J_n \right) = \int_{\Omega_{\bm X}} \rho_0 \bm \varphi_{n+\frac12} \times \bm B d\Omega_{\bm X} + \int_{\Gamma_{\bm X}} \bm \varphi_{n+\frac12} \times \bm H d\Gamma_{\bm X},
\end{align*}
in which 
\begin{align*}
\bm L_n := \int_{\Omega_{\bm X}} \rho_0 \bm V_{n} d\Omega_{\bm X} \quad \mbox{ and } \quad \bm J_n := \int_{\Omega_{\bm X}} \rho_0 \bm \varphi_{n} \times \bm V_n d\Omega_{\bm X}
\end{align*}
are the discrete approximations of $\bm L$ and $\bm J$ at time $t_n$.
\end{lemma}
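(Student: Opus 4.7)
The plan is to test the momentum equation \eqref{eq:em-ts-momentum} against two special choices of $\bm W$ that are admissible precisely because $\Gamma^{G}_{\bm X} = \emptyset$, so that $\mathcal V_{\bm V_h}$ contains constant fields and infinitesimal-rotation fields without violating any boundary constraint.

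For the linear momentum statement, I would set $\bm W \equiv \bm c$ for an arbitrary constant vector $\bm c$. Then $\nabla_{\bm X} \bm W = \bm 0$, which annihilates the stress term $(\bm F^T_{n+\frac12}\nabla_{\bm X} \bm W) : \bm S_{\mathrm{iso} \: \mathrm{alg}}$, the pressure term $J_{n+\frac12} P_{n+\frac12}\nabla_{\bm X} \bm W : \bm F^{-T}_{n+\frac12}$, and the grad-div stabilization term. What remains is $\bm c \cdot \int_{\Omega_{\bm X}} \rho_0(\bm V_{n+1}-\bm V_n)/\Delta t_n\, d\Omega_{\bm X}$ balanced by the loading integrals, and since $\bm c$ is arbitrary, the first identity follows at once.

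For the angular momentum statement, I would choose $\bm W = \bm \omega \times \bm \varphi_{n+\frac12}$ for an arbitrary constant $\bm \omega$, so that in indices $W_i = \epsilon_{ijk}\omega_j \varphi_{n+\frac12\, k}$ and $\partial W_i/\partial X_l = \epsilon_{ijk}\omega_j F_{n+\frac12\, kl}$. The main work is then to show that three terms vanish. First, $(\bm F^T_{n+\frac12}\nabla_{\bm X} \bm W)_{ml} = F_{n+\frac12\, im}\,\epsilon_{ijk}\omega_j F_{n+\frac12\, kl}$ is antisymmetric under $m\leftrightarrow l$ (a quick relabeling $i\leftrightarrow k$ together with $\epsilon_{kji}=-\epsilon_{ijk}$), and contracting with the symmetric tensor $\bm S_{\mathrm{iso}\:\mathrm{alg}}$ gives zero. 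Second, $\nabla_{\bm X} \bm W : \bm F^{-T}_{n+\frac12} = \epsilon_{ijk}\omega_j F_{n+\frac12\, kl}(F^{-1}_{n+\frac12})_{li} = \epsilon_{ijk}\omega_j\delta_{ki} = 0$, which kills both the pressure term and the grad-div stabilization term. The inertia term becomes $\bm \omega \cdot \int_{\Omega_{\bm X}} \rho_0 \bm \varphi_{n+\frac12}\times (\bm V_{n+1}-\bm V_n)/\Delta t_n\, d\Omega_{\bm X}$ after a cyclic permutation of the triple product, and the loading contributions rearrange analogously.

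The step I expect to be the main obstacle is relating $\bm \varphi_{n+\frac12}\times(\bm V_{n+1}-\bm V_n)$ to the clean difference $\bm \varphi_{n+1}\times \bm V_{n+1} - \bm \varphi_n \times \bm V_n$. I would establish the algebraic identity
\begin{align*}
\bm \varphi_{n+\frac12}\times(\bm V_{n+1}-\bm V_n) + (\bm \varphi_{n+1}-\bm \varphi_n)\times \bm V_{n+\frac12} = \bm \varphi_{n+1}\times \bm V_{n+1} - \bm \varphi_n \times \bm V_n,
\end{align*}
and then use the kinematic relation \eqref{eq:em-ts-kinematic}, which gives $\bm \varphi_{n+1}-\bm \varphi_n = \bm U_{n+1}-\bm U_n = \Delta t_n \bm V_{n+\frac12}$, so that the second term vanishes by antisymmetry of the cross product. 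Substituting back and invoking arbitrariness of $\bm \omega$ yields the second identity and completes the proof.
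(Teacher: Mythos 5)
Your proposal is correct and follows essentially the same route as the paper: since $\Gamma^{G}_{\bm X}=\emptyset$, the constant field $\bm c$ and the field $\bm \omega \times \bm \varphi_{n+\frac12}$ are admissible test functions, and substituting them into \eqref{eq:em-ts-momentum} (using the symmetry of $\bm S_{\mathrm{iso} \: \mathrm{alg}}$ and the kinematic relation \eqref{eq:em-ts-kinematic}) yields the two conservation statements. The paper's proof is just a terse statement of this choice of test functions, so your write-up simply supplies the details (antisymmetry of $\bm F^T_{n+\frac12}\nabla_{\bm X}\bm W$, vanishing of $\nabla_{\bm X}\bm W:\bm F^{-T}_{n+\frac12}$, and the cross-product identity) that the paper leaves implicit.
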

\begin{proof}
First, we notice that $\Gamma^{G}_{\bm X} = \emptyset$ implies $\Gamma_{\bm X} = \Gamma^H_{\bm X}$. This implies $\bm \xi$ and $\bm \xi \times \bm \varphi_{n+\frac12}$ are both admissible test functions for a given constant vector $\bm \xi \in \mathbb R^3$. Choosing them as the test functions in \eqref{eq:em-ts-momentum} gives the discrete conservation of momenta.
\end{proof}

In addition to the energy dissipation and momentum conservation, the design of the algorithmic stress needs to maintain consistency. The time integration scheme \eqref{eq:em-ts-kinematic}-\eqref{eq:em-ts-momentum} recovers the implicit mid-point scheme if the algorithmic stress is replaced by the mid-point evaluation of the stress, i.e.,
\begin{align*}
\bm S_{\mathrm{iso} \: \mathrm{mp}} := \bm S_{\mathrm{iso}}\left( \tilde{\bm F}^{T}_{n+\frac12} \tilde{\bm F}_{n+\frac12}, \bm \Gamma^1_{n+\frac12}, \cdots, \bm \Gamma^{m}_{n+\frac12} \right).
\end{align*}
Consequently, the accuracy of the temporal scheme is contingent upon the form of the algorithmic stress. In particular, we introduce the local truncation error as $\tau(t_n) := \left \lvert \bm S_{\mathrm{iso} \: \mathrm{alg}}(t) - \bm S_{\mathrm{iso} \:\mathrm{mp}}(t)\right \rvert$, where the terms $\bm S_{\mathrm{iso} \: \mathrm{alg}}(t)$ and $\bm S_{\mathrm{iso} \: \mathrm{mp}}(t)$ are obtained through replacing the discrete quantities by the time continuous solutions at the corresponding time instances. If 
\begin{align*}
\tau(t_n) \leq \mathcal K \Delta t_n^{\mathsf k},
\end{align*}
$\mathcal K$ is independent of $\Delta t_n$, and $\mathsf k > 0$, the algorithmic stress is consistent with the order of accuracy being $\mathsf k$. Invoking the formula \eqref{eq:em-ts-S-alg-def}, we have
\begin{align}
\label{eq:genereal-error-analysis}
\tau(t_n) = \left \lvert \bm S_{\mathrm{iso} \: n+\frac12}(t) - \bm S_{\mathrm{iso} \: \mathrm{mp}}(t) + \bm S_{\mathrm{iso} \: \mathrm{enh}}(t) \right\rvert \leq \left \lvert \bm S_{\mathrm{iso} \: n+\frac12}(t) - \bm S_{\mathrm{iso} \: \mathrm{mp}}(t) \right\rvert + \left \lvert \bm S_{\mathrm{iso} \: \mathrm{enh}}(t) \right\rvert.
\end{align}
It is known that $\left \lvert \bm S_{\mathrm{iso} \: n+\frac12}(t) - \bm S_{\mathrm{iso} \: \mathrm{mp}}(t) \right\rvert = \mathcal O(\Delta t^2)$ \cite{Gonzalez2000}. Therefore, the stress enhancement is the determining factor in the temporal accuracy. As a design criterion, we demand it to be at least a first-order, and preferably second-order, with respect to $\Delta t_n$, asymptotically. Its analysis relies on the specific integration algorithm for the viscoelastic constitutive equations.

\subsection{A first-order update formula for the time-discrete internal state variables}
\label{sec:first-order-update-formula-ISV}
In this section, we give an integration formula for the constitutive relation that satisfies the design criteria of $\bm S_{\mathrm{iso} \: \mathrm{alg}}$ outlined above. It results in a sub-optimal scheme with only first-order temporal accuracy. Nevertheless, its analysis allows a simple and relatively straightforward presentation of the main idea in the algorithm design. We, therefore, choose to start with this scheme as an inspiring example. We introduce the algorithmic approximation of $\bm Q^{\alpha}$ as $\bm Q^{\alpha}_{\mathrm{alg1}}$, which is demanded to satisfy the following relation,
\begin{align}
\label{eq:def-Q-alg-1}
\bm Q^{\alpha}_{\mathrm{alg1}} : \left( \bm \Gamma^{\alpha}_{n+1} - \bm \Gamma^{\alpha}_{n} \right) = -2 \left( \Upsilon^{\alpha}( \tilde{\bm C}_{n}, \bm \Gamma^{\alpha}_{n+1} ) - \Upsilon^{\alpha}( \tilde{\bm C}_{n}, \bm \Gamma^{\alpha}_{n} ) \right).
\end{align}
The above design criterion can be viewed as a \textit{directionality} property for the stress-like conjugate variables $\bm Q^{\alpha}$ inspired by its definition \eqref{eq:def-Q}. Making use of the form of the configurational free energy \eqref{eq:def-flv-Upsilon}, the right-hand side of \eqref{eq:def-Q-alg-1} can be explicitly expressed as
\begin{align}
\label{eq:directionality-alg-1}
-2 \left( \Upsilon(\tilde{\bm C}_n, \bm \Gamma^{\alpha}_{n+1}) - \Upsilon(\tilde{\bm C}_n, \bm \Gamma^{\alpha}_{n}) \right) = \left( \tilde{\bm S}^{\alpha}_{\mathrm{iso}}(\tilde{\bm C_n}) - \hat{\bm S}^{\alpha}_{0} - \mu^{\alpha}\left( \frac{\bm \Gamma^{\alpha}_{n+1} + \bm \Gamma^{\alpha}_n}{2} - \bm I \right) \right) : \left( \bm \Gamma^{\alpha}_{n+1} - \bm \Gamma^{\alpha}_{n} \right).
\end{align}
Combining \eqref{eq:def-Q-alg-1} and \eqref{eq:directionality-alg-1}, the algorithmic conjugate variables $\bm Q^{\alpha}_{\mathrm{alg1}}$ can be designed as
\begin{align}
\label{eq:constitutive-integration-Q-1}
\bm Q^{\alpha}_{\mathrm{alg1}} = \tilde{\bm S}^{\alpha}_{\mathrm{iso}}(\tilde{\bm C_n}) - \hat{\bm S}^{\alpha}_{0} - \mu^{\alpha}\left( \bm \Gamma^{\alpha}_{n+\frac12} - \bm I \right),
\end{align}
which ensures the satisfaction of the directionality property \eqref{eq:def-Q-alg-1}. Considering the constitutive relations for $\bm Q^{\alpha}$ given by \eqref{eq:constitutive-flv-Q}, its discrete counterpart can be given as
\begin{align}
\label{eq:constitutive-integration-Gamma-Q-1}
\eta^{\alpha} \frac{\bm \Gamma^{\alpha}_{n+1} - \bm \Gamma^{\alpha}_n}{\Delta t_n} = \bm Q^{\alpha}_{\mathrm{alg1}}.
\end{align}
Combining \eqref{eq:constitutive-integration-Q-1} and \eqref{eq:constitutive-integration-Gamma-Q-1}, we have a set of evolution equations for the discrete internal state variables
\begin{align*}
\eta^{\alpha} \frac{\bm \Gamma^{\alpha}_{n+1} - \bm \Gamma^{\alpha}_n}{\Delta t_n} = \tilde{\bm S}^{\alpha}_{\mathrm{iso}}(\tilde{\bm C_n}) - \hat{\bm S}^{\alpha}_{0} - \mu^{\alpha}\left( \bm \Gamma^{\alpha}_{n+\frac12} - \bm I \right),
\end{align*}
and the variables $\bm \Gamma^{\alpha}_{n+1}$ are readily obtained from the evolution equations as
\begin{align}
\label{eq:constitutive-integration-Gamma-1}
\bm \Gamma^{\alpha}_{n+1} = \frac{\Delta t_n}{\eta^{\alpha} + \mu^{\alpha} \Delta t_n / 2} \left( \tilde{\bm S}^{\alpha}_{\mathrm{iso}}(\tilde{\bm C}_n) - \hat{\bm S}^{\alpha}_{0} + \left( \frac{\eta^{\alpha}}{\Delta t_n} - \frac{\mu^{\alpha}}{2} \right) \bm \Gamma^{\alpha}_n  + \mu^{\alpha} \bm I \right).
\end{align}
The algorithmic definition of $\bm S_{\mathrm{iso}}$ is thus given as
\begin{align}
\label{eq:constitutive-integration-Salg-1}
& \bm S_{\mathrm{iso} \: \mathrm{alg1}} := \bm S_{\mathrm{iso} \: n+\frac12} + \bm S_{\mathrm{iso} \: \mathrm{enh1}}, \displaybreak[2] \displaybreak[2] \\
\label{eq:constitutive-integration-Salg-1-detailed}
& \bm S_{\mathrm{iso} \: \mathrm{enh1}} := \frac{ G_{\mathrm{iso}}(\tilde{\bm C}_{n+1}, \bm \Gamma^1_{n+1}, \cdots, \bm \Gamma^{m}_{n+1}) - G_{\mathrm{iso}}(\tilde{\bm C}_{n}, \bm \Gamma^1_{n+1}, \cdots, \bm \Gamma^{m}_{n+1}) - \bm S_{\mathrm{iso} \: n+\frac12} : \bm Z_{n} }{\left\lvert\bm Z_{n}\right\rvert} \frac{\bm Z_{n}}{\left\lvert\bm Z_{n}\right\rvert}.
\end{align}
The algorithmic stresses can be evaluated as follows. One first obtains the discrete internal state variables $\bm \Gamma^{\alpha}_{n+1}$ from $\tilde{\bm C}_{n}$ and $\bm \Gamma^{\alpha}_n$ according to \eqref{eq:constitutive-integration-Gamma-1}. Next, the algorithmic stress $\bm S_{\mathrm{iso} \: \mathrm{alg1}}$ is determined according to \eqref{eq:constitutive-integration-Salg-1}. We notice that the algorithmic conjugate variable $\bm Q^{\alpha}_{\mathrm{alg1}}$ given by \eqref{eq:constitutive-integration-Gamma-Q-1} is, in fact, unneeded in evaluating the constitutive relations. The algorithmic stress $\bm S_{\mathrm{iso} \: \mathrm{alg1}}$ is symmetric by construction and is thus momentum conserving according to Lemma \ref{lemma:ts-momentum-conservation}. Here we show that it satisfies the directionality property \eqref{eq:directionality-property} and consequently guarantees the energy stability.
 
\begin{proposition}
Assuming the boundary data $\bm G$ is time-independent, the algorithmic stress $\bm S_{\mathrm{iso} \: \mathrm{alg1}}$ given by \eqref{eq:constitutive-integration-Salg-1} together with the integration rule \eqref{eq:constitutive-integration-Gamma-1} ensures that the discrete scheme \eqref{eq:em-ts-kinematic}-\eqref{eq:em-ts-momentum} is energy stable in the sense of \eqref{eq:em-prop-1}.
\end{proposition}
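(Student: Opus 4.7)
The plan is to reduce the claim to Lemma~\ref{lemma:ts-energy-dissipation} by verifying that $\bm S_{\mathrm{iso}\:\mathrm{alg1}}$ satisfies the directionality condition \eqref{eq:directionality-property}. Once that condition is in hand, the energy stability statement \eqref{eq:em-prop-1} follows immediately from the lemma, with no additional work on the momentum or grad-div stabilization terms.

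To establish \eqref{eq:directionality-property}, I would first contract $\bm Z_n$ with the definition \eqref{eq:constitutive-integration-Salg-1}--\eqref{eq:constitutive-integration-Salg-1-detailed}. The $\bm S_{\mathrm{iso}\:n+\frac12}:\bm Z_n$ contribution coming from the mid-point stress is designed to cancel exactly the corresponding subtraction inside the quotient defining $\bm S_{\mathrm{iso}\:\mathrm{enh1}}$, and $\bm Z_n:\bm Z_n/\lvert \bm Z_n\rvert^2 = 1$, so the contraction collapses to
\begin{align*}
\bm Z_n : \bm S_{\mathrm{iso}\:\mathrm{alg1}} = G_{\mathrm{iso}}(\tilde{\bm C}_{n+1},\bm \Gamma^{1}_{n+1},\dots,\bm \Gamma^{m}_{n+1}) - G_{\mathrm{iso}}(\tilde{\bm C}_{n},\bm \Gamma^{1}_{n+1},\dots,\bm \Gamma^{m}_{n+1}).
\end{align*}
What remains is to close the gap between this and the right-hand side of \eqref{eq:directionality-property}, i.e.\ to show that swapping the $\bm \Gamma^{\alpha}_{n+1}$ slots for $\bm \Gamma^{\alpha}_n$ in the frozen-$\tilde{\bm C}_n$ evaluation produces precisely the algorithmic physical dissipation $\frac{\Delta t_n}{2}\sum_\alpha \eta^\alpha|(\bm \Gamma^{\alpha}_{n+1}-\bm \Gamma^{\alpha}_n)/\Delta t_n|^2$.

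That final step is where the construction of $\bm Q^{\alpha}_{\mathrm{alg1}}$ is essential. The split \eqref{eq:gibbs-energy} lets me reduce the mismatch to $\sum_\alpha[\Upsilon^{\alpha}(\tilde{\bm C}_n,\bm \Gamma^{\alpha}_{n})-\Upsilon^{\alpha}(\tilde{\bm C}_n,\bm \Gamma^{\alpha}_{n+1})]$, since $G_{\mathrm{iso}}^{\infty}$ and $F^{\alpha}$ only depend on $\tilde{\bm C}_n$ there. I would then invoke the $\bm Q$-directionality \eqref{eq:def-Q-alg-1}, which was built into the choice \eqref{eq:constitutive-integration-Q-1}, to rewrite each such difference as $\tfrac12\bm Q^{\alpha}_{\mathrm{alg1}}:(\bm \Gamma^{\alpha}_{n+1}-\bm \Gamma^{\alpha}_n)$, and finally substitute the discrete constitutive relation \eqref{eq:constitutive-integration-Gamma-Q-1} to expose the quadratic form $\tfrac{\eta^\alpha}{2\Delta t_n}|\bm \Gamma^{\alpha}_{n+1}-\bm \Gamma^{\alpha}_n|^2$. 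Summing over $\alpha$ produces exactly the required dissipation.

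The proof is largely bookkeeping; the only potential obstacle is making sure the cancellation in $\bm S_{\mathrm{iso}\:\mathrm{enh1}}:\bm Z_n$ is handled cleanly (in particular, that $\bm Z_n\neq \bm 0$ is implicit so the quotient is well-defined, and that the definitions \eqref{eq:def-Q-alg-1} and \eqref{eq:directionality-alg-1} are used with the correct sign convention). Once the two directionalities --- \eqref{eq:def-Q-alg-1} for the internal variable update and \eqref{eq:directionality-property} for the stress --- are chained together as above, Lemma~\ref{lemma:ts-energy-dissipation} delivers \eqref{eq:em-prop-1} and concludes the proof.
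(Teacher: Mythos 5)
Your proposal is correct and follows essentially the same route as the paper: contract $\bm Z_n$ with $\bm S_{\mathrm{iso}\:\mathrm{alg1}}$ to isolate the frozen-$\tilde{\bm C}_n$ energy difference, add and subtract $G_{\mathrm{iso}}(\tilde{\bm C}_n,\bm\Gamma^{\alpha}_n)$, reduce the mismatch to the $\Upsilon^{\alpha}$ differences, and chain the $\bm Q$-directionality \eqref{eq:def-Q-alg-1} with the discrete constitutive relation \eqref{eq:constitutive-integration-Gamma-Q-1} to recover the dissipation term, so that Lemma~\ref{lemma:ts-energy-dissipation} applies. No gaps.
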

\begin{proof}
According to Lemma \ref{lemma:ts-energy-dissipation}, it is sufficient to show the energy stability by verifying that the relation \eqref{eq:directionality-property} holds, i.e.,
\begin{align*}
\bm Z_n : \bm S_{\mathrm{iso} \: \mathrm{alg1}} =& G_{\mathrm{iso}}(\tilde{\bm C}_{n+1}, \bm \Gamma^{1}_{n+1}, \cdots, \bm \Gamma^{m}_{n+1}) - G_{\mathrm{iso}}(\tilde{\bm C}_{n}, \bm \Gamma^{1}_{n}, \cdots, \bm \Gamma^{m}_{n}) + \frac{\Delta t_n}{2} \sum_{\alpha=1}^{m} \eta^{\alpha} \left\lvert \frac{ \bm \Gamma^{\alpha}_{n+1} - \bm \Gamma^{\alpha}_n}{\Delta t_n} \right\rvert^2.
\end{align*}
The contraction of $\bm Z_n$ with $\bm S_{\mathrm{iso} \: \mathrm{alg1}}$ directly leads to
\begin{align*}
\bm Z_n : \bm S_{\mathrm{iso} \: \mathrm{alg1}} =& G_{\mathrm{iso}}(\tilde{\bm C}_{n+1}, \bm \Gamma^1_{n+1}, \cdots, \bm \Gamma^{m}_{n+1}) - G_{\mathrm{iso}}(\tilde{\bm C}_{n}, \bm \Gamma^1_{n+1}, \cdots, \bm \Gamma^{m}_{n+1}) \displaybreak[2] \\
=& G_{\mathrm{iso}}(\tilde{\bm C}_{n+1}, \bm \Gamma^1_{n+1}, \cdots, \bm \Gamma^{m}_{n+1}) - G_{\mathrm{iso}}(\tilde{\bm C}_{n}, \bm \Gamma^1_{n}, \cdots, \bm \Gamma^{m}_{n}) \displaybreak[2] \\
& + G_{\mathrm{iso}}(\tilde{\bm C}_{n}, \bm \Gamma^1_{n}, \cdots, \bm \Gamma^{m}_{n}) - G_{\mathrm{iso}}(\tilde{\bm C}_{n}, \bm \Gamma^1_{n+1}, \cdots, \bm \Gamma^{m}_{n+1}).
\end{align*}
Invoking the definition of $G_{\mathrm{iso}}$, straightforward calculations reveal that the last two terms can be expressed as
\begin{align*}
G_{\mathrm{iso}}(\tilde{\bm C}_{n}, \bm \Gamma^1_{n}, \cdots, \bm \Gamma^{m}_{n}) - G_{\mathrm{iso}}(\tilde{\bm C}_{n}, \bm \Gamma^1_{n+1}, \cdots, \bm \Gamma^{m}_{n+1}) =& - \sum_{\alpha=1}^{m} \left( \Upsilon^{\alpha}\left(\tilde{\bm C}_n, \bm \Gamma^{\alpha}_{n+1} \right) - \Upsilon^{\alpha}\left(\tilde{\bm C}_n, \bm \Gamma^{\alpha}_{n} \right) \right) \displaybreak[2] \\
=& \frac12 \sum_{\alpha=1}^{m} \Big( \bm Q^{\alpha}_{\mathrm{alg1}} : \left( \bm \Gamma^{\alpha}_{n+1} - \bm \Gamma^{\alpha}_{n} \right) \Big) \displaybreak[2] \\
=& \frac{\Delta t_n}{2} \sum_{\alpha=1}^{m} \eta^{\alpha} \left\lvert \frac{ \bm \Gamma^{\alpha}_{n+1} - \bm \Gamma^{\alpha}_n}{\Delta t_n} \right\rvert^2.
\end{align*}
The second equality in the above is due to the definition of $\bm Q^{\alpha}_{\mathrm{alg1}}$ \eqref{eq:def-Q-alg-1}; the last equality is due to the discrete relation \eqref{eq:constitutive-integration-Gamma-Q-1}. With the above, we conclude that the algorithmic stress $\bm S_{\mathrm{iso} \: \mathrm{alg1}}$ satisfies the relation \eqref{eq:directionality-property}, and the overall scheme is thus energy stable.
\end{proof}

\begin{proposition}
Assuming the time-continuous solutions are sufficiently smooth, the stress enhancement term $\bm S_{\mathrm{iso} \: \mathrm{enh1}}$ is asymptotically a first-order term with respect to $\Delta t_n$.
\end{proposition}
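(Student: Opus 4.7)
The plan is to show the scalar numerator of $\bm S_{\mathrm{iso} \: \mathrm{enh1}}$ is $\mathcal{O}(\Delta t_n^2)$, so that after dividing by $|\bm Z_n| = \Theta(\Delta t_n)$ one obtains the first-order bound. Under the stated smoothness, $\bm Z_n = (\bm C(t_{n+1}) - \bm C(t_n))/2 = \mathcal{O}(\Delta t_n)$, and we may write
\begin{align*}
\bm S_{\mathrm{iso} \: \mathrm{enh1}} = \frac{N}{|\bm Z_n|}\,\frac{\bm Z_n}{|\bm Z_n|}, \qquad N := G_{\mathrm{iso}}(\tilde{\bm C}_{n+1}, \bm \Gamma^1_{n+1}, \ldots, \bm \Gamma^{m}_{n+1}) - G_{\mathrm{iso}}(\tilde{\bm C}_{n}, \bm \Gamma^1_{n+1}, \ldots, \bm \Gamma^{m}_{n+1}) - \bm S_{\mathrm{iso} \: n+\frac12} : \bm Z_{n}.
\end{align*}
Since $\bm Z_n/|\bm Z_n|$ has unit norm, everything reduces to controlling $|N|/|\bm Z_n|$, and I am implicitly assuming the generic nondegenerate regime $d\bm C/dt \ne 0$ so that $|\bm Z_n|$ does not vanish faster than $\Delta t_n$.

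First I would perform a symmetric Taylor expansion about $\bm C_{n+\frac12}$, with the internal variables frozen at $\bm \Gamma^{\alpha}_{n+1}$. The identity $\bm C_{n+1} - \bm C_{n+\frac12} = -(\bm C_n - \bm C_{n+\frac12}) = \bm Z_n$, the cancellation of even-order terms in a central difference, and the definition $\bm S_{\mathrm{iso}} = 2\,\partial G_{\mathrm{iso}}/\partial \bm C$ together give
\begin{align*}
G_{\mathrm{iso}}(\tilde{\bm C}_{n+1}, \bm \Gamma^1_{n+1}, \ldots, \bm \Gamma^{m}_{n+1}) - G_{\mathrm{iso}}(\tilde{\bm C}_{n}, \bm \Gamma^1_{n+1}, \ldots, \bm \Gamma^{m}_{n+1}) = \bm S_{\mathrm{iso}}\bigl(\bm C_{n+\frac12}, \bm \Gamma^1_{n+1}, \ldots, \bm \Gamma^m_{n+1}\bigr) : \bm Z_n + \mathcal{O}(\Delta t_n^3).
\end{align*}
Subtracting $\bm S_{\mathrm{iso} \: n+\frac12} : \bm Z_n = \bm S_{\mathrm{iso}}(\bm C_{n+\frac12}, \bm \Gamma^1_{n+\frac12},\ldots,\bm \Gamma^m_{n+\frac12}) : \bm Z_n$ leaves the residual
\begin{align*}
N = \bigl[ \bm S_{\mathrm{iso}}\bigl(\bm C_{n+\frac12}, \bm \Gamma^1_{n+1},\ldots,\bm \Gamma^m_{n+1}\bigr) - \bm S_{\mathrm{iso}}\bigl(\bm C_{n+\frac12}, \bm \Gamma^1_{n+\frac12},\ldots,\bm \Gamma^m_{n+\frac12}\bigr) \bigr] : \bm Z_n + \mathcal{O}(\Delta t_n^3).
\end{align*}

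Next I would expand $\bm S_{\mathrm{iso}}$ to first order in its internal-variable arguments. Because $\bm \Gamma^{\alpha}_{n+1} - \bm \Gamma^{\alpha}_{n+\frac12} = (\bm \Gamma^{\alpha}_{n+1} - \bm \Gamma^{\alpha}_n)/2 = \mathcal{O}(\Delta t_n)$ for smooth solutions, the bracket is $\mathcal{O}(\Delta t_n)$. Contracted with $\bm Z_n = \mathcal{O}(\Delta t_n)$, this yields $N = \mathcal{O}(\Delta t_n^2)$, and thus $|\bm S_{\mathrm{iso} \: \mathrm{enh1}}| = \mathcal{O}(\Delta t_n)$ as claimed.

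The subtle point I would emphasize is what this argument simultaneously reveals: the order loss is entirely due to the mismatch between $\bm \Gamma^{\alpha}_{n+1}$ (forced into the $G_{\mathrm{iso}}$-differences by the directionality design in \eqref{eq:constitutive-integration-Salg-1-detailed}) and $\bm \Gamma^{\alpha}_{n+\frac12}$ (appearing inside $\bm S_{\mathrm{iso} \: n+\frac12}$). Had the two sets of arguments coincided at the midpoint, the bracket would vanish identically and the estimate would upgrade to $N = \mathcal{O}(\Delta t_n^3)$, hence $\bm S_{\mathrm{iso} \: \mathrm{enh1}} = \mathcal{O}(\Delta t_n^2)$. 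Identifying this as the sole obstruction both completes the proof and motivates the second-order update to be introduced next.
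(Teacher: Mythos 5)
Your proof is correct and follows essentially the same route as the paper: a central Taylor expansion of the $G_{\mathrm{iso}}$-difference about the mean configuration, identification of the $\mathcal O(\Delta t_n^2)$ numerator as coming solely from the $\bm \Gamma^{\alpha}_{n+1}$ versus $\bm \Gamma^{\alpha}_{n+\frac12}$ mismatch in the stress arguments, and division by $\lvert \bm Z_n \rvert = \Theta(\Delta t_n)$ (a nondegeneracy assumption you state explicitly and the paper leaves implicit). Your closing remark that restoring midpoint arguments upgrades the enhancement to $\mathcal O(\Delta t_n^2)$ is precisely the mechanism exploited by $\bm S_{\mathrm{iso}\:\mathrm{enh2}}$ in Section \ref{sec:second-order-update-formula-ISV}.
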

\begin{proof}
Taking the norm on $\bm S_{\mathrm{iso} \: \mathrm{enh1}}(t)$ gives
\begin{align*}
&\left\lvert \bm S_{\mathrm{iso} \: \mathrm{enh1}}(t) \right\rvert = \left\lvert G_{\mathrm{iso}}(\tilde{\bm C}(t_{n+1}), \bm \Gamma^1(t_{n+1}), \cdots, \bm \Gamma^{m}(t_{n+1})) - G_{\mathrm{iso}}(\tilde{\bm C}(t_{n}), \bm \Gamma^1(t_{n+1}), \cdots, \bm \Gamma^{m}(t_{n+1})) \right. \displaybreak[2] \\
& \left. - \bm S_{\mathrm{iso}}\left( \bm C(t_{n+\frac12}), \bm \Gamma^{1}(t_{n+\frac12}), \cdots, \bm \Gamma^{m}(t_{n+\frac12}) \right) : \frac12 \left( \bm C(t_{n+1}) - \bm C(t_n) \right) \right\rvert / \left\lvert \frac12 \left( \bm C(t_{n+1}) - \bm C(t_n) \right) \right\rvert .
\end{align*}
Performing Taylor expansion about $t_{n+\frac12}$ gives
\begin{align*}
& \bm C(t_{n+1}) - \bm C(t_n) = \dot{\bm C}(t_{n+\frac12})\Delta t_n + \mathcal O(\Delta t_n^3) \dddot{\bm C}(t_{n+\frac12}), \\
& G_{\mathrm{iso}}(\tilde{\bm C}(t_{n+1}), \bm \Gamma^1(t_{n+1}), \cdots, \bm \Gamma^{m}(t_{n+1})) - G_{\mathrm{iso}}(\tilde{\bm C}(t_{n}), \bm \Gamma^1(t_{n+1}), \cdots, \bm \Gamma^{m}(t_{n+1})) \\
= & \frac12 \bm S_{\mathrm{iso}}\left( \bm C(t_{n+\frac12}), \bm \Gamma^{1}(t_{n+1}), \cdots, \bm \Gamma^{m}(t_{n+1}) \right) : \dot{\bm C}(t_{n+\frac12}) \Delta t_n + \mathcal O(\Delta t_n^3) \\
= & \frac12 \bm S_{\mathrm{iso}}\left( \bm C(t_{n+\frac12}), \bm \Gamma^{1}(t_{n+\frac12}), \cdots, \bm \Gamma^{m}(t_{n+\frac12}) \right) : \dot{\bm C}(t_{n+\frac12}) \Delta t_n + \mathcal O(\Delta t_n^2)
\end{align*}
In the above, we use $\dot{\left( \cdot \right)}$ to indicate the time derivative of the quantity $\left( \cdot \right)$. Consequently, it follows that
\begin{align*}
\left\lvert \bm S_{\mathrm{iso} \: \mathrm{enh1}}(t) \right\rvert = \mathcal O(\Delta t^2_n) /  \left\lvert \frac12 \left( \bm C(t_{n+1}) - \bm C(t_n) \right) \right\rvert = \mathcal O(\Delta t_n).
\end{align*}
\end{proof}
Based on the analysis made in \eqref{eq:genereal-error-analysis}, we may conclude that the stress enhancement $\bm S_{\mathrm{iso} \: \mathrm{enh1}}$ induces a first-order perturbation to the mid-point scheme, rendering the scheme to be first-order accurate in time.

\subsection{A second-order update formula for the time-discrete internal state variables}
\label{sec:second-order-update-formula-ISV}
Here, we derive an alternative update formula for the inelastic constitutive relations, which achieves second-order accuracy. At the time step $t_{n+1}$, we demand that the algorithmic approximations to $\bm Q^{\alpha}$, denoted by $\bm Q^{\alpha}_{\mathrm{alg2}}$, satisfy the following,
\begin{align}
\label{eq:def-alg-Q2}
\bm Q^{\alpha}_{\mathrm{alg2}} : \left( \bm \Gamma^{\alpha}_{n+1} - \bm \Gamma^{\alpha}_{n} \right) =& -2 \mu^{\alpha} \left( \left\lvert \frac{\bm \Gamma^{\alpha}_{n+1} - \bm I}{2} \right\rvert^2 - \left\lvert \frac{\bm \Gamma^{\alpha}_{n} - \bm I}{2} \right\rvert^2  \right) \nonumber \\
& \quad + \left( \frac{\tilde{\bm S}_{\mathrm{iso}}^{\alpha}(\tilde{\bm C}_{n+1}) + \tilde{\bm S}_{\mathrm{iso}}^{\alpha}(\tilde{\bm C}_{n})}{2} - \hat{\bm S}^{\alpha}_{0} \right) : \left( \bm \Gamma^{\alpha}_{n+1} - \bm \Gamma^{\alpha}_{n} \right),
\end{align}
for $1\leq \alpha \leq m$. It can be shown that
\begin{align*}
\left\lvert \frac{\bm \Gamma^{\alpha}_{n+1} - \bm I}{2} \right\rvert^2 - \left\lvert \frac{\bm \Gamma^{\alpha}_{n} - \bm I}{2} \right\rvert^2 = \left( \bm \Gamma^{\alpha}_{n+\frac12} - \bm I \right) : \frac{\bm \Gamma^{\alpha}_{n+1} - \bm \Gamma^{\alpha}_n}{2},
\end{align*}
and \eqref{eq:def-alg-Q2} can be reorganized as
\begin{align*}
\bm Q^{\alpha}_{\mathrm{alg2}} : \left( \bm \Gamma^{\alpha}_{n+1} - \bm \Gamma^{\alpha}_{n} \right) = \left( \frac{\tilde{\bm S}_{\mathrm{iso}}^{\alpha}(\tilde{\bm C}_{n+1}) + \tilde{\bm S}_{\mathrm{iso}}^{\alpha}(\tilde{\bm C}_{n})}{2} - \hat{\bm S}^{\alpha}_{0} - \mu^{\alpha}\left( \bm \Gamma^{\alpha}_{n+\frac12} - \bm I \right) \right) : \left( \bm \Gamma^{\alpha}_{n+1} - \bm \Gamma^{\alpha}_{n} \right).
\end{align*}
Therefore, the explicit form of $\bm Q^{\alpha}_{\mathrm{alg2}}$ is given by
\begin{align}
\label{eq:explicit-def-alg-Q2}
\bm Q^{\alpha}_{\mathrm{alg2}} = \frac{\tilde{\bm S}_{\mathrm{iso}}^{\alpha}(\tilde{\bm C}_{n+1}) + \tilde{\bm S}_{\mathrm{iso}}^{\alpha}(\tilde{\bm C}_{n})}{2} - \hat{\bm S}^{\alpha}_{0} - \mu^{\alpha}\left( \bm \Gamma^{\alpha}_{n+\frac12} - \bm I \right),
\end{align}
which guarantees the satisfaction of \eqref{eq:def-alg-Q2}. Compared with the definition of $\bm Q^{\alpha}_{\mathrm{alg1}}$ given in \eqref{eq:constitutive-integration-Q-1}, the difference resides in the evaluation of the term $\tilde{\bm S}^{\alpha}_{\mathrm{iso}}$. To reveal its connection with the free energy, we state and prove the following lemma.
\begin{lemma}
\label{lemma-Upsilon-split-relation}
For the configurational free energy defined in \eqref{eq:def-flv-Upsilon} and $\bm Q^{\alpha}_{\mathrm{alg2}}$ given in \eqref{eq:explicit-def-alg-Q2}, the following holds for $1 \leq \alpha \leq m$,
\begin{align}
\label{eq:lemma-Upsilon-split-relation}
\Upsilon^{\alpha}(\tilde{\bm C}_{n+1}, \bm \Gamma^{\alpha}_{n+1}) - \Upsilon^{\alpha}(\tilde{\bm C}_{n+1}, \bm \Gamma^{\alpha}_{n+\frac12}) + \Upsilon^{\alpha}(\tilde{\bm C}_{n}, \bm \Gamma^{\alpha}_{n+\frac12}) - \Upsilon^{\alpha}(\tilde{\bm C}_{n}, \bm \Gamma^{\alpha}_{n}) = -\frac12 \bm Q^{\alpha}_{\mathrm{alg2}} : \left( \bm \Gamma^{\alpha}_{n+1} - \bm \Gamma^{\alpha}_{n}\right). 
\end{align}
\end{lemma}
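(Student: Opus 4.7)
The plan is to exploit the fact that the configurational free energy \eqref{eq:def-flv-Upsilon} is quadratic in $\bm \Gamma^{\alpha}$ with coefficients that depend on $\tilde{\bm C}$ only through $\tilde{\bm S}^{\alpha}_{\mathrm{iso}}(\tilde{\bm C})$, and that the pure-$\tilde{\bm C}$ piece $\tfrac{1}{4\mu^{\alpha}}|\tilde{\bm S}^{\alpha}_{\mathrm{iso}} - \hat{\bm S}^{\alpha}_0|^2$ appears with each value of $\tilde{\bm C}$ an equal number of times with opposite signs on the left-hand side. Thus I would first note that this $\bm \Gamma^{\alpha}$-independent term contributes zero to the alternating sum, so only the quadratic term $\mu^{\alpha}|(\bm \Gamma^{\alpha}-\bm I)/2|^2$ and the cross term $(\hat{\bm S}^{\alpha}_0 - \tilde{\bm S}^{\alpha}_{\mathrm{iso}}):(\bm \Gamma^{\alpha}-\bm I)/2$ need to be tracked.

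Next I would evaluate each pair of differences separately, holding $\tilde{\bm C}$ fixed within each pair. For the two differences at fixed $\tilde{\bm C}_{n+1}$ and fixed $\tilde{\bm C}_{n}$ respectively, the quadratic part telescopes nicely once combined, and using the polarization identity $|a|^2 - |b|^2 = (a+b):(a-b)$ applied to $a = (\bm \Gamma^{\alpha}_{n+1} - \bm I)/2$ and $b = (\bm \Gamma^{\alpha}_{n} - \bm I)/2$, the combined quadratic contribution reduces to
\begin{align*}
\mu^{\alpha}\left(\bm \Gamma^{\alpha}_{n+\tfrac12} - \bm I\right) : \frac{\bm \Gamma^{\alpha}_{n+1} - \bm \Gamma^{\alpha}_{n}}{2}.
\end{align*}

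The cross-term bookkeeping is where the averaging structure of $\bm Q^{\alpha}_{\mathrm{alg2}}$ emerges. Since $\bm \Gamma^{\alpha}_{n+1} - \bm \Gamma^{\alpha}_{n+\tfrac12}$ and $\bm \Gamma^{\alpha}_{n+\tfrac12} - \bm \Gamma^{\alpha}_{n}$ both equal $(\bm \Gamma^{\alpha}_{n+1} - \bm \Gamma^{\alpha}_{n})/2$, the two cross-term contributions combine into
\begin{align*}
\left(\hat{\bm S}^{\alpha}_{0} - \frac{\tilde{\bm S}_{\mathrm{iso}}^{\alpha}(\tilde{\bm C}_{n+1}) + \tilde{\bm S}_{\mathrm{iso}}^{\alpha}(\tilde{\bm C}_{n})}{2}\right) : \frac{\bm \Gamma^{\alpha}_{n+1} - \bm \Gamma^{\alpha}_{n}}{2}.
\end{align*}
Adding the quadratic and cross-term contributions and factoring out $(\bm \Gamma^{\alpha}_{n+1} - \bm \Gamma^{\alpha}_{n})/2$, the bracket I obtain is precisely $-\bm Q^{\alpha}_{\mathrm{alg2}}$ by the defining formula \eqref{eq:explicit-def-alg-Q2}, which yields the claimed identity \eqref{eq:lemma-Upsilon-split-relation}.

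No real obstacle is expected; the computation is bookkeeping rather than a genuine difficulty. The one point deserving care is the observation that the decomposition into fixed-$\tilde{\bm C}$ pairs is what permits the dissipative quadratic-in-$\tilde{\bm S}^{\alpha}_{\mathrm{iso}}$ contribution to drop out cleanly, so the identity relies crucially on the specific mixing pattern $(\tilde{\bm C}_{n+1},\bm \Gamma^{\alpha}_{n+\tfrac12})$ and $(\tilde{\bm C}_{n},\bm \Gamma^{\alpha}_{n+\tfrac12})$ that shows up in the left-hand side, and on the symmetric averaging of $\tilde{\bm S}^{\alpha}_{\mathrm{iso}}(\tilde{\bm C}_{n+1})$ and $\tilde{\bm S}^{\alpha}_{\mathrm{iso}}(\tilde{\bm C}_{n})$ embedded in the definition of $\bm Q^{\alpha}_{\mathrm{alg2}}$.
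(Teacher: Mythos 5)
Your proposal is correct and follows essentially the same route as the paper: expand $\Upsilon^{\alpha}$ within each fixed-$\tilde{\bm C}$ pair so the $\bm\Gamma^{\alpha}$-independent term cancels, combine the telescoping quadratic terms via the polarization identity and the cross terms via $\bm\Gamma^{\alpha}_{n+1}-\bm\Gamma^{\alpha}_{n+\frac12}=\bm\Gamma^{\alpha}_{n+\frac12}-\bm\Gamma^{\alpha}_{n}=\tfrac12(\bm\Gamma^{\alpha}_{n+1}-\bm\Gamma^{\alpha}_{n})$, and identify the resulting bracket with $-\bm Q^{\alpha}_{\mathrm{alg2}}$ from \eqref{eq:explicit-def-alg-Q2}. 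The only cosmetic difference is that the paper invokes the implicit relation \eqref{eq:def-alg-Q2} in the last step, whereas you use the explicit form directly; the computations are identical.
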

\begin{proof}
One may calculate $\Upsilon^{\alpha}(\tilde{\bm C}_{n+1}, \bm \Gamma^{\alpha}_{n+1}) - \Upsilon^{\alpha}(\tilde{\bm C}_{n+1}, \bm \Gamma^{\alpha}_{n+\frac12})$ as follows,
\begin{align*}
& \Upsilon^{\alpha}(\tilde{\bm C}_{n+1}, \bm \Gamma^{\alpha}_{n+1}) - \Upsilon^{\alpha}(\tilde{\bm C}_{n+1}, \bm \Gamma^{\alpha}_{n+\frac12}) \displaybreak[2] \nonumber \\
=& \mu^{\alpha} \left\lvert \frac{\bm \Gamma^{\alpha}_{n+1} - \bm I}{2} \right\rvert^2 + \left( \hat{\bm S}^{\alpha}_{0} - \tilde{\bm S}^{\alpha}_{\mathrm{iso}}(\tilde{\bm C}_{n+1}) \right) : \frac{\bm \Gamma^{\alpha}_{n+1} - \bm I}{2} + \frac{1}{4\mu^{\alpha}} \left\lvert \tilde{\bm S}^{\alpha}_{\mathrm{iso}}(\tilde{\bm C}_{n+1}) - \hat{\bm S}^{\alpha}_0 \right\lvert^2 \displaybreak[2] \nonumber \\
& - \mu^{\alpha} \left\lvert \frac{\bm \Gamma^{\alpha}_{n+\frac12} - \bm I}{2} \right\rvert^2 - \left( \hat{\bm S}^{\alpha}_{0} - \tilde{\bm S}^{\alpha}_{\mathrm{iso}}(\tilde{\bm C}_{n+1}) \right) : \frac{\bm \Gamma^{\alpha}_{n+\frac12} - \bm I}{2} - \frac{1}{4\mu^{\alpha}} \left\lvert \tilde{\bm S}^{\alpha}_{\mathrm{iso}}(\tilde{\bm C}_{n+1}) - \hat{\bm S}^{\alpha}_0 \right\lvert^2 \displaybreak[2] \nonumber \\
=& \mu^{\alpha} \left( \left\lvert \frac{\bm \Gamma^{\alpha}_{n+1} - \bm I}{2} \right\rvert^2 - \left\lvert \frac{\bm \Gamma^{\alpha}_{n+\frac12} - \bm I}{2} \right\rvert^2\right) + \left( \hat{\bm S}^{\alpha}_{0} - \tilde{\bm S}^{\alpha}_{\mathrm{iso}}(\tilde{\bm C}_{n+1}) \right) : \frac{\bm \Gamma^{\alpha}_{n+1} - \bm \Gamma^{\alpha}_{n}}{4}.
\end{align*}
Similarily, it can be shown that
\begin{align*}
& \Upsilon^{\alpha}(\tilde{\bm C}_{n}, \bm \Gamma^{\alpha}_{n+\frac12}) - \Upsilon^{\alpha}(\tilde{\bm C}_{n}, \bm \Gamma^{\alpha}_{n}) = \mu^{\alpha} \left( \left\lvert \frac{\bm \Gamma^{\alpha}_{n+\frac12} - \bm I}{2} \right\rvert^2 - \left\lvert \frac{\bm \Gamma^{\alpha}_{n} - \bm I}{2} \right\rvert^2\right) + \left( \hat{\bm S}^{\alpha}_{0} - \tilde{\bm S}^{\alpha}_{\mathrm{iso}}(\tilde{\bm C}_{n}) \right) : \frac{\bm \Gamma^{\alpha}_{n+1} - \bm \Gamma^{\alpha}_{n}}{4}.
\end{align*}
Combining the above two yields
\begin{align*}
& \Upsilon^{\alpha}(\tilde{\bm C}_{n+1}, \bm \Gamma^{\alpha}_{n+1}) - \Upsilon^{\alpha}(\tilde{\bm C}_{n+1}, \bm \Gamma^{\alpha}_{n+\frac12}) + \Upsilon^{\alpha}(\tilde{\bm C}_{n}, \bm \Gamma^{\alpha}_{n+\frac12}) - \Upsilon^{\alpha}(\tilde{\bm C}_{n}, \bm \Gamma^{\alpha}_{n})  \nonumber \\
=& \mu^{\alpha} \left( \left\lvert \frac{\bm \Gamma^{\alpha}_{n+1} - \bm I}{2} \right\rvert^2 - \left\lvert \frac{\bm \Gamma^{\alpha}_{n} - \bm I}{2} \right\rvert^2  \right) - \left( \frac{\tilde{\bm S}_{\mathrm{iso}}^{\alpha}(\tilde{\bm C}_{n+1}) + \tilde{\bm S}_{\mathrm{iso}}^{\alpha}(\tilde{\bm C}_{n})}{2} - \hat{\bm S}^{\alpha}_{0} \right) : \left( \frac{\bm \Gamma^{\alpha}_{n+1} - \bm \Gamma^{\alpha}_{n}}{2} \right) \nonumber \\
=& -\frac12 \bm Q^{\alpha}_{\mathrm{alg2}} : \left( \bm \Gamma^{\alpha}_{n+1} - \bm \Gamma^{\alpha}_{n}\right).
\end{align*}
The last equality of the above derivation is due to the definition of $\bm Q^{\alpha}_{\mathrm{alg2}}$ \eqref{eq:def-alg-Q2}.
\end{proof}
The relation \eqref{eq:lemma-Upsilon-split-relation} is in fact a \textit{directionality} property for the stress-like variables $\bm Q^{\alpha}$, mimicking their definitions \eqref{eq:def-Q}. Next, we discretize the constitutive relations \eqref{eq:constitutive-flv-Q} as
\begin{align}
\label{eq:isv-Q-alg-discretization}
\bm Q^{\alpha}_{\mathrm{alg2}} =  \eta^{\alpha} \frac{\bm \Gamma^{\alpha}_{n+1} - \bm \Gamma^{\alpha}_n}{\Delta t_n}.
\end{align}
Relating the relations \eqref{eq:explicit-def-alg-Q2} with \eqref{eq:isv-Q-alg-discretization} leads to a set of discrete equations for the internal state variables,
\begin{align}
\label{eq:isv-update-formula}
\eta^{\alpha} \frac{\bm \Gamma^{\alpha}_{n+1} - \bm \Gamma^{\alpha}_n}{\Delta t_n} = \frac{\tilde{\bm S}_{\mathrm{iso}}^{\alpha}(\tilde{\bm C}_{n+1}) + \tilde{\bm S}_{\mathrm{iso}}^{\alpha}(\tilde{\bm C}_{n})}{2} - \hat{\bm S}^{\alpha}_{0} - \mu^{\alpha}\left( \bm \Gamma^{\alpha}_{n+\frac12} - \bm I \right).
\end{align}
From the above equations, we obtain the update formula for the internal state variables as 
\begin{align}
\label{eq:isv-update-formula-2}
\bm \Gamma^{\alpha}_{n+1} = \frac{\Delta t_n}{\eta^{\alpha} + \mu^{\alpha} \Delta t_n / 2} \left( \frac{\tilde{\bm S}_{\mathrm{iso}}^{\alpha}(\tilde{\bm C}_{n+1}) + \tilde{\bm S}_{\mathrm{iso}}^{\alpha}(\tilde{\bm C}_{n})}{2} - \hat{\bm S}^{\alpha}_{0} - \left( \frac{\mu^{\alpha}}{2} - \frac{\eta^{\alpha}}{\Delta t_n} \right) \bm \Gamma^{\alpha}_n  + \mu^{\alpha} \bm I \right).
\end{align}
We next state and prove a lemma that reveals the significance of the relation \eqref{eq:isv-update-formula-2} in terms of energy dissipation.
\begin{lemma}
\label{lemma:update-formula-isv-relation-summed}
For the configurational free energy defined in \eqref{eq:def-flv-Upsilon}, the discrete evolution equations \eqref{eq:isv-update-formula-2} guarantee the following,
\begin{align*}
& G_{\mathrm{iso}}\left( \tilde{\bm C}_{n+1}, \bm \Gamma^1_{n+1}, \cdots, \bm \Gamma^m_{n+1} \right) - G_{\mathrm{iso}}\left( \tilde{\bm C}_{n+1}, \bm \Gamma^1_{n+\frac12}, \cdots, \bm \Gamma^m_{n+\frac12} \right) +  G_{\mathrm{iso}}\left( \tilde{\bm C}_{n}, \bm \Gamma^1_{n+\frac12}, \cdots, \bm \Gamma^m_{n+\frac12} \right) \nonumber \displaybreak[2] \\
& \quad - G_{\mathrm{iso}}\left( \tilde{\bm C}_{n}, \bm \Gamma^1_{n}, \cdots, \bm \Gamma^m_{n} \right) = -\frac12 \Delta t_n \sum_{\alpha=1}^{m}\eta^{\alpha} \left\lvert \frac{\bm \Gamma^{\alpha}_{n+1} - \bm \Gamma^{\alpha}_{n}}{\Delta t_n} \right\rvert^2.
\end{align*}
\end{lemma}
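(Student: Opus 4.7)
The plan is to unwind the left-hand side by exploiting the additive split of $G_{\mathrm{iso}}$ in \eqref{eq:gibbs-energy}, then invoke the preceding lemma to handle each mode, and finally substitute the discrete constitutive relation to produce the quadratic dissipation on the right.

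First, I observe that $G_{\mathrm{iso}}(\tilde{\bm C}, \bm \Gamma^1, \cdots, \bm \Gamma^m) = G_{\mathrm{iso}}^{\infty}(\tilde{\bm C}) + \sum_{\alpha=1}^{m}\Upsilon^{\alpha}(\tilde{\bm C}, \bm \Gamma^{\alpha})$. In the two differences appearing on the left-hand side, the first slot $\tilde{\bm C}$ is held fixed within each difference, so the $G_{\mathrm{iso}}^{\infty}$ contributions cancel. The summand in $\Upsilon^{\alpha}$ only couples to its own $\bm \Gamma^{\alpha}$, so rearranging yields the key identity
\begin{align*}
\text{LHS} = \sum_{\alpha=1}^{m} \Big( \Upsilon^{\alpha}(\tilde{\bm C}_{n+1}, \bm \Gamma^{\alpha}_{n+1}) - \Upsilon^{\alpha}(\tilde{\bm C}_{n+1}, \bm \Gamma^{\alpha}_{n+\frac12}) + \Upsilon^{\alpha}(\tilde{\bm C}_{n}, \bm \Gamma^{\alpha}_{n+\frac12}) - \Upsilon^{\alpha}(\tilde{\bm C}_{n}, \bm \Gamma^{\alpha}_{n}) \Big).
\end{align*}

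Next I apply Lemma \ref{lemma-Upsilon-split-relation} mode-by-mode to rewrite each bracket as $-\tfrac12 \bm Q^{\alpha}_{\mathrm{alg2}} : (\bm \Gamma^{\alpha}_{n+1} - \bm \Gamma^{\alpha}_{n})$. At this stage the left-hand side has become $-\tfrac12 \sum_{\alpha=1}^{m} \bm Q^{\alpha}_{\mathrm{alg2}} : (\bm \Gamma^{\alpha}_{n+1} - \bm \Gamma^{\alpha}_{n})$. Finally I close the argument by invoking the discrete constitutive relation \eqref{eq:isv-Q-alg-discretization}, namely $\bm Q^{\alpha}_{\mathrm{alg2}} = \eta^{\alpha}(\bm \Gamma^{\alpha}_{n+1} - \bm \Gamma^{\alpha}_{n})/\Delta t_n$, which is precisely the content of the update formula \eqref{eq:isv-update-formula-2}. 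Contracting gives $\bm Q^{\alpha}_{\mathrm{alg2}} : (\bm \Gamma^{\alpha}_{n+1} - \bm \Gamma^{\alpha}_{n}) = \Delta t_n \, \eta^{\alpha} |(\bm \Gamma^{\alpha}_{n+1} - \bm \Gamma^{\alpha}_{n})/\Delta t_n|^2$, producing the stated right-hand side.

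There is no real obstacle here since the heavy lifting has already been packed into Lemma \ref{lemma-Upsilon-split-relation}. The only thing that requires attention is bookkeeping: making sure that the additive decomposition of $G_{\mathrm{iso}}$ legitimately allows the $G^{\infty}_{\mathrm{iso}}(\tilde{\bm C})$ and the off-mode $\Upsilon^{\beta}$ ($\beta \neq \alpha$) terms to cancel in pairs, and that the definition \eqref{eq:explicit-def-alg-Q2} of $\bm Q^{\alpha}_{\mathrm{alg2}}$ is consistent with the form of $\bm \Gamma^{\alpha}_{n+1}$ produced by \eqref{eq:isv-update-formula-2}. This consistency is built into the derivation since \eqref{eq:isv-update-formula-2} was obtained by eliminating $\bm Q^{\alpha}_{\mathrm{alg2}}$ between \eqref{eq:explicit-def-alg-Q2} and \eqref{eq:isv-Q-alg-discretization}, so the proof is essentially a one-line chaining of the previous lemma with the discretized constitutive law.
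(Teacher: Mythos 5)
Your proposal is correct and follows essentially the same route as the paper: sum the per-mode identity of Lemma \ref{lemma-Upsilon-split-relation} over $\alpha$ (using the additive split of $G_{\mathrm{iso}}$ so the $G^{\infty}_{\mathrm{iso}}$ and off-mode terms cancel), then substitute $\bm Q^{\alpha}_{\mathrm{alg2}} = \eta^{\alpha}(\bm \Gamma^{\alpha}_{n+1}-\bm \Gamma^{\alpha}_{n})/\Delta t_n$ from \eqref{eq:isv-Q-alg-discretization} and contract. Your extra remark on the consistency between \eqref{eq:explicit-def-alg-Q2}, \eqref{eq:isv-Q-alg-discretization}, and \eqref{eq:isv-update-formula-2} is a correct (and welcome) piece of bookkeeping that the paper leaves implicit.
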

\begin{proof}
Summing the relations \eqref{eq:lemma-Upsilon-split-relation} for all relaxation processes results in
\begin{align*}
& G_{\mathrm{iso}}\left( \tilde{\bm C}_{n+1}, \bm \Gamma^1_{n+1}, \cdots, \bm \Gamma^m_{n+1} \right) - G_{\mathrm{iso}}\left( \tilde{\bm C}_{n+1}, \bm \Gamma^1_{n+\frac12}, \cdots, \bm \Gamma^m_{n+\frac12} \right) +  G_{\mathrm{iso}}\left( \tilde{\bm C}_{n}, \bm \Gamma^1_{n+\frac12}, \cdots, \bm \Gamma^m_{n+\frac12} \right) \nonumber \displaybreak[2] \\
& - G_{\mathrm{iso}}\left( \tilde{\bm C}_{n}, \bm \Gamma^1_{n}, \cdots, \bm \Gamma^m_{n} \right) \nonumber \displaybreak[2] \\
= & -\frac12 \sum_{\alpha=1}^{m} \bm Q^{\alpha}_{\mathrm{alg2}} : \left( \bm \Gamma^{\alpha}_{n+1} - \bm \Gamma^{\alpha}_{n} \right) \displaybreak[2] \\
=& -\frac12 \sum_{\alpha=1}^{m} \eta^{\alpha} \frac{\left( \bm \Gamma^{\alpha}_{n+1} - \bm \Gamma^{\alpha}_n \right)}{\Delta t_n} : \left( \bm \Gamma^{\alpha}_{n+1} - \bm \Gamma^{\alpha}_{n} \right) \displaybreak[2] \\
=& -\frac12 \Delta t_n \sum_{\alpha=1}^{m}\eta^{\alpha} \left\lvert \frac{\bm \Gamma^{\alpha}_{n+1} - \bm \Gamma^{\alpha}_{n}}{\Delta t_n} \right\rvert^2.
\end{align*}
In the second equality of the above derivation, the relations \eqref{eq:isv-Q-alg-discretization} are utilized.
\end{proof}
With the integration of the constitutive relations stated above, the algorithmic stress can be given as follows,
\begin{align}
\label{eq:def-S-alg2}
\bm S_{\mathrm{iso} \: \mathrm{alg2}} :=& \bm S_{\mathrm{iso} \: n+\frac12} + \bm S_{\mathrm{iso} \: \mathrm{enh2}}, \displaybreak[2] \\
\label{eq:def-S-enh2}
\bm S_{\mathrm{iso} \: \mathrm{enh2}} :=& \frac{ G_{\mathrm{iso}}(\tilde{\bm C}_{n+1}, \bm \Gamma^1_{n+\frac12}, \cdots, \bm \Gamma^{m}_{n+\frac12}) - G_{\mathrm{iso}}(\tilde{\bm C}_{n}, \bm \Gamma^1_{n+\frac12}, \cdots, \bm \Gamma^{m}_{n+\frac12}) - \bm S_{\mathrm{iso} \: n+\frac12} : \bm Z_{n} }{\left\lvert\bm Z_{n}\right\rvert} \frac{\bm Z_{n}}{\left\lvert \bm Z_{n} \right\rvert },
\end{align}
where the definition of $\bm S_{\mathrm{iso} \: n+\frac12}$ has been given in \eqref{eq:em-ts-S-alg-def}. With the algorithmic stress $\bm S_{\mathrm{iso} \: \mathrm{alg2}}$, we have a new time-stepping scheme \eqref{eq:em-ts-kinematic}-\eqref{eq:em-ts-momentum}. In the following, we show that it is energy-momentum consistent and second-order accurate. The symmetry of the algorithmic stress is ensured by construction, and the momentum conservation is guaranteed due to Lemma \ref{lemma:ts-momentum-conservation}. The energy stability and temporal accuracy of this scheme are analyzed by the following propositions. 
\begin{proposition}
Assuming the boundary data $\bm G$ is time-independent, the algorithmic stress $\bm S_{\mathrm{iso} \: \mathrm{alg2}}$ given by \eqref{eq:def-S-alg2} together with the integration rule \eqref{eq:isv-update-formula-2} ensures that the discrete scheme \eqref{eq:em-ts-kinematic}-\eqref{eq:em-ts-momentum} is energy stable in the sense of \eqref{eq:em-prop-1}.
\end{proposition}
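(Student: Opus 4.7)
The plan is to invoke Lemma~\ref{lemma:ts-energy-dissipation}: it reduces the energy stability claim to verifying the directionality condition \eqref{eq:directionality-property} for $\bm S_{\mathrm{iso} \: \mathrm{alg2}}$. So my goal is to show
\begin{align*}
\bm Z_n : \bm S_{\mathrm{iso} \: \mathrm{alg2}} = G_{\mathrm{iso}}(\tilde{\bm C}_{n+1}, \bm \Gamma^{1}_{n+1}, \cdots, \bm \Gamma^{m}_{n+1}) - G_{\mathrm{iso}}(\tilde{\bm C}_{n}, \bm \Gamma^{1}_{n}, \cdots, \bm \Gamma^{m}_{n}) + \frac{\Delta t_n}{2} \sum_{\alpha=1}^{m} \eta^{\alpha} \left\lvert \frac{ \bm \Gamma^{\alpha}_{n+1} - \bm \Gamma^{\alpha}_n}{\Delta t_n} \right\rvert^2.
\end{align*}

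The first step is to contract $\bm Z_n$ with the definition \eqref{eq:def-S-alg2}--\eqref{eq:def-S-enh2}. The enhancement term $\bm S_{\mathrm{iso} \: \mathrm{enh2}}$ has the quotient form that is designed precisely so that $\bm Z_n : \bm S_{\mathrm{iso} \: \mathrm{enh2}}$ telescopes: the factor $\bm Z_n / |\bm Z_n|^2$ cancels against the numerator to give $G_{\mathrm{iso}}(\tilde{\bm C}_{n+1}, \bm \Gamma^{1}_{n+\frac12}, \cdots) - G_{\mathrm{iso}}(\tilde{\bm C}_{n}, \bm \Gamma^{1}_{n+\frac12}, \cdots) - \bm S_{\mathrm{iso}\: n+\frac12} : \bm Z_n$. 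Adding $\bm Z_n : \bm S_{\mathrm{iso} \: n+\frac12}$ then yields
\begin{align*}
\bm Z_n : \bm S_{\mathrm{iso} \: \mathrm{alg2}} = G_{\mathrm{iso}}(\tilde{\bm C}_{n+1}, \bm \Gamma^{1}_{n+\frac12}, \cdots, \bm \Gamma^{m}_{n+\frac12}) - G_{\mathrm{iso}}(\tilde{\bm C}_{n}, \bm \Gamma^{1}_{n+\frac12}, \cdots, \bm \Gamma^{m}_{n+\frac12}).
\end{align*}

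The second step is the crucial one: I need to convert this difference (which freezes the internal state variables at the mid-point) into the target expression (which evaluates $G_{\mathrm{iso}}$ at consistent time levels, plus the dissipation term). This is exactly the content of Lemma~\ref{lemma:update-formula-isv-relation-summed}. Rearranging the identity proved there,
\begin{align*}
G_{\mathrm{iso}}(\tilde{\bm C}_{n+1}, \bm \Gamma^{1}_{n+\frac12}, \cdots) - G_{\mathrm{iso}}(\tilde{\bm C}_{n}, \bm \Gamma^{1}_{n+\frac12}, \cdots) = G_{\mathrm{iso}}(\tilde{\bm C}_{n+1}, \bm \Gamma^{1}_{n+1}, \cdots) - G_{\mathrm{iso}}(\tilde{\bm C}_{n}, \bm \Gamma^{1}_{n}, \cdots) + \frac{\Delta t_n}{2} \sum_{\alpha=1}^{m} \eta^{\alpha} \left\lvert \frac{ \bm \Gamma^{\alpha}_{n+1} - \bm \Gamma^{\alpha}_n}{\Delta t_n} \right\rvert^2,
\end{align*}
which matches exactly the right-hand side of the directionality condition. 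Combining the two steps completes the verification, and the energy stability follows directly from Lemma~\ref{lemma:ts-energy-dissipation}.

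There is no real obstacle: the construction of $\bm S_{\mathrm{iso}\: \mathrm{enh2}}$ is specifically engineered so that the contraction step telescopes, and Lemma~\ref{lemma:update-formula-isv-relation-summed} is precisely the bridge needed to move from the mid-point evaluation of the internal state variables to the endpoint evaluation while generating the correct physical dissipation. The proof is a direct two-line computation that mirrors the first-order case but leverages the sharper ``split directionality'' identity for $\Upsilon^{\alpha}$ established in Lemma~\ref{lemma-Upsilon-split-relation}.
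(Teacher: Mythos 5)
Your proposal is correct and follows essentially the same route as the paper's own proof: reduce to the directionality condition \eqref{eq:directionality-property} via Lemma~\ref{lemma:ts-energy-dissipation}, telescope the contraction $\bm Z_n : \bm S_{\mathrm{iso} \: \mathrm{alg2}}$ to get the mid-point-frozen energy difference, and then invoke Lemma~\ref{lemma:update-formula-isv-relation-summed} (the paper adds and subtracts the endpoint energies rather than rearranging the lemma, which is the same manipulation). No gaps.
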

\begin{proof}
According to Lemma \ref{lemma:ts-energy-dissipation}, it is sufficient to show the energy stability by verifying that the relation \eqref{eq:directionality-property} holds, i.e.,
\begin{align*}
\bm Z_n : \bm S_{\mathrm{iso} \: \mathrm{alg2}} =& G_{\mathrm{iso}}(\tilde{\bm C}_{n+1}, \bm \Gamma^{1}_{n+1}, \cdots, \bm \Gamma^{m}_{n+1}) - G_{\mathrm{iso}}(\tilde{\bm C}_{n}, \bm \Gamma^{1}_{n}, \cdots, \bm \Gamma^{m}_{n}) + \frac{\Delta t_n}{2} \sum_{\alpha=1}^{m} \eta^{\alpha} \left\lvert \frac{ \bm \Gamma^{\alpha}_{n+1} - \bm \Gamma^{\alpha}_n}{\Delta t_n} \right\rvert^2.
\end{align*}
Here we directly calculate of the contraction between $\bm Z_n$ and $\bm S_{\mathrm{iso} \: \mathrm{alg2}}$ and get the following,
\begin{align*}
& \bm Z_n : \bm S_{\mathrm{iso} \: \mathrm{alg2}} = G_{\mathrm{iso}}(\tilde{\bm C}_{n+1}, \bm \Gamma^1_{n+\frac12}, \cdots, \bm \Gamma^{m}_{n+\frac12}) - G_{\mathrm{iso}}(\tilde{\bm C}_{n}, \bm \Gamma^1_{n+\frac12}, \cdots, \bm \Gamma^{m}_{n+\frac12}) \displaybreak[2] \\
=& G_{\mathrm{iso}}(\tilde{\bm C}_{n+1}, \bm \Gamma^1_{n+1}, \cdots, \bm \Gamma^{m}_{n+1}) - G_{\mathrm{iso}}(\tilde{\bm C}_{n}, \bm \Gamma^1_{n}, \cdots, \bm \Gamma^{m}_{n}) + G_{\mathrm{iso}}(\tilde{\bm C}_{n+1}, \bm \Gamma^1_{n+\frac12}, \cdots, \bm \Gamma^{m}_{n+\frac12}) \displaybreak[2] \\
& - G_{\mathrm{iso}}(\tilde{\bm C}_{n}, \bm \Gamma^1_{n+\frac12}, \cdots, \bm \Gamma^{m}_{n+\frac12}) - G_{\mathrm{iso}}(\tilde{\bm C}_{n+1}, \bm \Gamma^1_{n+1}, \cdots, \bm \Gamma^{m}_{n+1}) + G_{\mathrm{iso}}(\tilde{\bm C}_{n}, \bm \Gamma^1_{n}, \cdots, \bm \Gamma^{m}_{n}) \displaybreak[2] \\
=& G_{\mathrm{iso}}(\tilde{\bm C}_{n+1}, \bm \Gamma^1_{n+1}, \cdots, \bm \Gamma^{m}_{n+1}) - G_{\mathrm{iso}}(\tilde{\bm C}_{n}, \bm \Gamma^1_{n}, \cdots, \bm \Gamma^{m}_{n}) + \frac{\Delta t_n}{2}\sum_{\alpha=1}^{m}\eta^{\alpha} \left\lvert \frac{\bm \Gamma^{\alpha}_{n+1} - \bm \Gamma^{\alpha}_{n}}{\Delta t_n} \right\rvert^2.
\end{align*}
The last equality of the above is due to Lemma \ref{lemma:update-formula-isv-relation-summed}.
\end{proof}

\begin{proposition}
Assuming the time-continuous solutions are sufficiently smooth, the stress enhancement term $\bm S_{\mathrm{iso} \: \mathrm{enh2}}$ is asymptotically a second-order term with respect to $\Delta t_n$.
\end{proposition}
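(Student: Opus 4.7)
The strategy is a direct analog of the first-order proposition, but exploits an algebraic symmetry that the previous enhancement $\bm S_{\mathrm{iso} \: \mathrm{enh1}}$ lacked: in $\bm S_{\mathrm{iso} \: \mathrm{enh2}}$, both evaluations of $G_{\mathrm{iso}}$ in the numerator use the \emph{same} internal-state-variable argument $\bm \Gamma^{\alpha}_{n+\frac12}$, so the numerator is a pure secant-to-tangent difference of a scalar function of a single argument $\bm C$, evaluated symmetrically about the midpoint $\bm C_{n+\frac12}$.

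First I would freeze the internal state variables and introduce the auxiliary scalar function
\begin{align*}
h(\bm C) := G_{\mathrm{iso}}\big( \tilde{\bm C}(\bm C), \bm \Gamma^1_{n+\frac12}, \cdots, \bm \Gamma^m_{n+\frac12} \big),
\end{align*}
so that, by the chain rule through $\tilde{\bm C} = J^{-\frac23}\bm C$ and the identities \eqref{eq:split-fictitious-S}--\eqref{eq:constitutive_S_iso}, one has $\bm S_{\mathrm{iso} \: n+\frac12} = 2\,\partial h / \partial \bm C$ evaluated at $\bm C_{n+\frac12}$. With this, the numerator of $\bm S_{\mathrm{iso} \: \mathrm{enh2}}(t)$ reduces, along the continuous trajectory, to
\begin{align*}
N(t) := h(\bm C(t_{n+1})) - h(\bm C(t_n)) - \tfrac{\partial h}{\partial \bm C}\big(\bm C(t_{n+\frac12})\big) : \big( \bm C(t_{n+1}) - \bm C(t_n) \big).
\end{align*}

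Next I would perform Taylor expansion of $h(\bm C(t_{n\pm 1}))$ about $\bm C(t_{n+\frac12})$ and subtract. Writing $\bm C(t_{n\pm 1}) = \bm C(t_{n+\frac12}) \pm \tfrac12 \dot{\bm C}(t_{n+\frac12})\Delta t_n + \mathcal O(\Delta t_n^2)$, the constant and quadratic terms cancel under subtraction, the linear terms combine to reconstruct precisely $\tfrac{\partial h}{\partial \bm C}(\bm C(t_{n+\frac12})) : (\bm C(t_{n+1}) - \bm C(t_n))$, and the leading remainder is cubic in $\Delta t_n$. Hence $N(t) = \mathcal O(\Delta t_n^3)$. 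Finally, since $|\bm Z_n(t)| = \tfrac12 |\bm C(t_{n+1}) - \bm C(t_n)| = \mathcal O(\Delta t_n)$ for smooth motions, the ratio yields
\begin{align*}
\left\lvert \bm S_{\mathrm{iso} \: \mathrm{enh2}}(t) \right\rvert = \frac{|N(t)|}{|\bm Z_n(t)|} = \mathcal O(\Delta t_n^2),
\end{align*}
which, together with the general error decomposition \eqref{eq:genereal-error-analysis}, delivers second-order temporal accuracy.

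The main obstacle I anticipate is the clean chain-rule bookkeeping needed to establish that $2\,\partial h/\partial \bm C$ coincides with the paper's definition of $\bm S_{\mathrm{iso}}$ (which is routed through $\tilde{\bm S}$, the projector $\mathbb P$, and the factor $J^{-\frac23}$), because only this identification makes the linear Taylor term exactly cancel the subtracted stress piece; any residual linear contribution would immediately spoil the argument and drop the order back to first. Once that identification is in hand, the remaining work is the symmetric Taylor computation used in the proof of the preceding proposition, with the key improvement coming from the matched $\bm \Gamma^{\alpha}_{n+\frac12}$ arguments that eliminate the $\mathcal O(\Delta t_n^2)$ discrepancy between $\bm \Gamma^{\alpha}_{n+1}$ and $\bm \Gamma^{\alpha}(t_{n+\frac12})$ responsible for the loss of one order in the first-order case.
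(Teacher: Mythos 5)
Your proposal is correct and follows essentially the same route as the paper's proof: a symmetric Taylor expansion about $t_{n+\frac12}$ showing the numerator of $\bm S_{\mathrm{iso}\:\mathrm{enh2}}$ is $\mathcal O(\Delta t_n^3)$ (precisely because both $G_{\mathrm{iso}}$ evaluations share the frozen argument $\bm \Gamma^{\alpha}_{n+\frac12}$, so the leading term is $\frac12\bm S_{\mathrm{iso}\:n+\frac12}:\dot{\bm C}(t_{n+\frac12})\Delta t_n$ and cancels against $\bm S_{\mathrm{iso}\:n+\frac12}:\bm Z_n$), followed by division by $\lvert \bm Z_n\rvert = \mathcal O(\Delta t_n)$. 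Your auxiliary function $h$ and the identification $\bm S_{\mathrm{iso}\:n+\frac12} = 2\,\partial h/\partial \bm C$ at $\bm C_{n+\frac12}$ merely make explicit the chain-rule identity ($2\,\partial G_{\mathrm{iso}}/\partial \bm C = J^{-\frac23}\mathbb P:\tilde{\bm S} = \bm S_{\mathrm{iso}}$ at fixed $\bm \Gamma^{\alpha}$) that the paper uses implicitly, so this is a presentational rather than substantive difference.
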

\begin{proof}
Taking the norm on $\bm S_{\mathrm{iso} \: \mathrm{enh2}}(t)$ gives
\begin{align*}
&\left\lvert \bm S_{\mathrm{iso} \: \mathrm{enh2}}(t) \right\rvert = \left\lvert G_{\mathrm{iso}}(\tilde{\bm C}(t_{n+1}), \bm \Gamma^1(t_{n+\frac12}), \cdots, \bm \Gamma^{m}(t_{n+\frac12})) - G_{\mathrm{iso}}(\tilde{\bm C}(t_{n}), \bm \Gamma^1(t_{n+\frac12}), \cdots, \bm \Gamma^{m}(t_{n+\frac12})) \right. \displaybreak[2] \\
& \left. - \bm S_{\mathrm{iso}}\left( \bm C(t_{n+\frac12}), \bm \Gamma^{1}(t_{n+\frac12}), \cdots, \bm \Gamma^{m}(t_{n+\frac12}) \right) : \frac12 \left( \bm C(t_{n+1}) - \bm C(t_n) \right) \right\rvert / \left\lvert \frac12 \left( \bm C(t_{n+1}) - \bm C(t_n) \right) \right\rvert .
\end{align*}
Performing Taylor expansion about $t_{n+\frac12}$ gives
\begin{align*}
& \bm C(t_{n+1}) - \bm C(t_n) = \dot{\bm C}(t_{n+\frac12})\Delta t_n + \mathcal O(\Delta t_n^3) \dddot{\bm C}(t_{n+\frac12}), \\
& G_{\mathrm{iso}}(\tilde{\bm C}(t_{n+1}), \bm \Gamma^1(t_{n+\frac12}), \cdots, \bm \Gamma^{m}(t_{n+\frac12})) - G_{\mathrm{iso}}(\tilde{\bm C}(t_{n}), \bm \Gamma^1(t_{n+\frac12}), \cdots, \bm \Gamma^{m}(t_{n+\frac12})) \\
= & \frac12 \bm S_{\mathrm{iso}}\left( \bm C(t_{n+\frac12}), \bm \Gamma^{1}(t_{n+\frac12}), \cdots, \bm \Gamma^{m}(t_{n+\frac12}) \right) : \dot{\bm C}(t_{n+\frac12}) \Delta t_n + \mathcal O(\Delta t_n^3)
\end{align*}
It follows that
\begin{align*}
\left\lvert \bm S_{\mathrm{iso} \: \mathrm{enh2}}(t) \right\rvert = \mathcal O(\Delta t^3_n) /  \left\lvert \frac12 \left( \bm C(t_{n+1}) - \bm C(t_n) \right) \right\rvert = \mathcal O(\Delta t^2_n).
\end{align*}
\end{proof}
Based on the analysis made in \eqref{eq:genereal-error-analysis}, we may conclude that the stress enhancement $\bm S_{\mathrm{iso} \: \mathrm{enh2}}$ yields a second-order perturbation to the mid-point scheme. It thus maintains the second-order temporal accuracy.

\subsection{A remark on two alternative approaches}
\label{subsec:remark-on-two-alternative-approaches}
We mention that there exists an alternative approach proposed by Mart\'{i}n, et al. \cite{Martin2014}. In their work, the discrete gradient formula is exploited exclusively. The algorithmic design for $\bm Q^{\alpha}$ takes the following form,
\begin{align*}
& \bm Q^{\alpha}_{\mathrm{alg3}} := \bm Q^{\alpha}_{\star} - \left( \frac12 \left(  \Upsilon^{\alpha}(\tilde{\bm C}_{n+1}, \bm \Gamma^{\alpha}_{n+1}) - \Upsilon^{\alpha}(\tilde{\bm C}_{n+1}, \bm \Gamma^{\alpha}_{n}) + \Upsilon^{\alpha}(\tilde{\bm C}_{n}, \bm \Gamma^{\alpha}_{n+1}) - \Upsilon^{\alpha}(\tilde{\bm C}_{n}, \bm \Gamma^{\alpha}_{n})  \right) - \bm Q^{\alpha}_{\star} : \bm D^{\alpha}_n \right) \frac{\bm D^{\alpha}_n}{\left \lvert \bm D^{\alpha}_n \right \rvert^2},
\end{align*}
with
\begin{align*}
\bm Q^{\alpha}_{\star} := \frac12 \left( \bm Q^{\alpha}(\bm C_{n+1}, \bm \Gamma^{\alpha}_{n+\frac12}) + \bm Q^{\alpha}(\bm C_{n}, \bm \Gamma^{\alpha}_{n+\frac12}) \right) \quad \mbox{and} \quad \bm D^{\alpha}_n := \frac12 \left( \bm \Gamma^{\alpha}_{n+1} - \bm \Gamma^{\alpha}_n \right).
\end{align*}
It can be shown that 
\begin{align}
\label{eq:contraction-D-Q-alg3}
\bm D^{\alpha}_{n} : \bm Q^{\alpha}_{\mathrm{alg3}} =& -\frac12 \left( \Upsilon^{\alpha}(\tilde{\bm C}_{n+1}, \bm \Gamma^{\alpha}_{n+1}) - \Upsilon^{\alpha}(\tilde{\bm C}_{n+1}, \bm \Gamma^{\alpha}_{n}) + \Upsilon^{\alpha}(\tilde{\bm C}_{n}, \bm \Gamma^{\alpha}_{n+1}) - \Upsilon^{\alpha}(\tilde{\bm C}_{n}, \bm \Gamma^{\alpha}_{n}) \right) \displaybreak[2] \\
=& \mu^{\alpha} \left( \left\lvert \frac{\bm \Gamma^{\alpha}_{n+1} - \bm I}{2} \right\rvert^2 - \left\lvert \frac{\bm \Gamma^{\alpha}_{n} - \bm I}{2} \right\rvert^2  \right) - \left( \frac{\tilde{\bm S}_{\mathrm{iso}}^{\alpha}(\tilde{\bm C}_{n+1}) + \tilde{\bm S}_{\mathrm{iso}}^{\alpha}(\tilde{\bm C}_{n})}{2} - \hat{\bm S}^{\alpha}_{0} \right) : \frac{ \bm \Gamma^{\alpha}_{n+1} - \bm \Gamma^{\alpha}_{n} }{2}, \nonumber \\
=& \bm D^{\alpha}_{n} : \bm Q^{\alpha}_{\mathrm{alg2}}, \nonumber
\end{align}
according to \eqref{eq:def-alg-Q2}. This reveals the link between the algorithm of \cite{Martin2014} and the second-order scheme developed here. However, there are a few drawbacks of the scheme based on $\bm Q^{\alpha}_{\mathrm{alg3}}$. First, the integration of the constitutive relation $\eta^{\alpha}\left( \bm \Gamma^{\alpha}_{n+1} - \bm \Gamma^{\alpha}_{n} \right)/\Delta t_n = \bm Q^{\alpha}_{\mathrm{alg3}}$ is a nonlinear equation for $\bm \Gamma^{\alpha}_{n+1}$. It thus requires the local Newton-Raphson iteration at each quadrature point to determine the value of $\bm \Gamma^{\alpha}_{n+1}$. In this regard, the discrete gradient formula is not particularly advantageous to Simo's collocation approach \cite{Laursen2001,Simo1992c}. Second, the discrete gradient formula suffers from a numerical robustness issue. When the value of $\left \lvert \bm D^{\alpha}_n \right \rvert$ becomes too small (e.g. in steady-state calculations), the quotient term in the discrete gradient formula needs to be turned off to avoid floating-point exceptions \cite{Mohr2008,Liu2023}. 

The constitutive integration algorithm proposed here utilized the particular structure of the configurational free energy \eqref{eq:def-flv-Upsilon}, leading to a simple explicit update formula \eqref{eq:isv-update-formula-2}. The aforementioned two shortcomings are circumvented in this approach. When the configurational free energy is completely nonlinear with respect to $\bm \Gamma^{\alpha}$, it is inevitable to perform Newton-Raphson iterations locally at each quadrature point. Yet, we still advocate equations that do not involve a quotient form. Noticing that the essence of the algorithmic design of $\bm Q^{\alpha}$ is the satisfaction of the relation \eqref{eq:contraction-D-Q-alg3}, the equations for the internal state variables can be given as
\begin{align*}
\eta^{\alpha} \frac{\left\lvert \bm \Gamma^{\alpha}_{n+1} - \bm \Gamma^{\alpha}_{n} \right\rvert^2 }{\Delta t_n} + \Upsilon^{\alpha}(\tilde{\bm C}_{n+1}, \bm \Gamma^{\alpha}_{n+1}) - \Upsilon^{\alpha}(\tilde{\bm C}_{n+1}, \bm \Gamma^{\alpha}_{n}) + \Upsilon^{\alpha}(\tilde{\bm C}_{n}, \bm \Gamma^{\alpha}_{n+1}) - \Upsilon^{\alpha}(\tilde{\bm C}_{n}, \bm \Gamma^{\alpha}_{n}) = 0.
\end{align*}
In our opinion, the above equation is still superior to the integration scheme $\eta^{\alpha}\left( \bm \Gamma^{\alpha}_{n+1} - \bm \Gamma^{\alpha}_{n} \right)/\Delta t_n = \bm Q^{\alpha}_{\mathrm{alg3}}$, because numerically non-robust quotient terms like $\bm D^{\alpha}_n / \left\lvert \bm D^{\alpha}_{n} \right\rvert$ are not involved.

\section{Implementation}
\label{sec:implementation}
In this section, we outline the steps involved in the implementation of the fully-discrete energy-momentum consistent algorithms for viscoelastodynamics. In Section \ref{subsec:segregatd-predictor-multi-corrector-algorithm}, the solution algorithm for integrating the fully discrete balance equations is presented, which relies on a segregated predictor multi-corrector algorithm \cite{Liu2018,Rossi2016}. The segregation is achieved by leveraging the block factorization of the consistent tangent matrix and an assumption that the kinematic residual is zero over the Newton-Raphson iterations (see Proposition 5 of \cite{Liu2018}). Here in this study, a stronger consistency result is obtained due to the use of the specific temporal scheme, which is given in Proposition \ref{prop:kinematic-residual-remain-zero} below. In Section \ref{subsec:algorithm-constitutive-integration}, the implementation detail of the constitutive integration is presented.

\subsection{Segregated predictor multi-corrector algorithm}
\label{subsec:segregatd-predictor-multi-corrector-algorithm}
The discrete equations \eqref{eq:em-ts-kinematic}-\eqref{eq:em-ts-momentum} constitute a system of nonlinear algebraic equations to be solved over each time step, and we utilize the Newton-Raphson method to address this problem.  We denote 
\begin{align*}
\bm Y_{n+1,(l)} := \left\lbrace  \bm U_{n+1,(l)}, P_{n+1,(l)}, \bm V_{n+1,(l)} \right\rbrace^T
\end{align*}
as the solution vector at the Newton-Raphson iteration step $l$ for $0 \leq l \leq l_{\mathrm{max}}$. Moreover, let $\bm E_i$, $i = 1, 2, 3$, be the Cartesian basis vectors in the reference configuration, $\left\lbrace N_A  \bm E_i \right\rbrace$ be the basis functions that span the discrete function space $\mathcal S_h$, and $\left \lbrace M_B \right \rbrace$ be the basis functions that span the discrete function space $\mathcal P_h$. We define the residual vectors at the $l$-th iteration corresponding to the kinematics, mass, and momentum equations by substituting the basis functions in place of the test functions as follows,
\begin{align*}
& \boldsymbol{\mathrm R}_{(l)} := \left\lbrace \boldsymbol{\mathrm R}^k_{(l)}, \boldsymbol{\mathrm R}^p_{(l)}, \boldsymbol{\mathrm R}^m_{(l)} \right\rbrace^T, \nonumber \displaybreak[2] \\
& \boldsymbol{\mathrm R}^k_{(l)}\left(\bm Y_{n+1,(l)}\right) := \mathbf B^k\left( \bm Y_{n+1, (l)}, \bm Y_{n} \right), \nonumber \displaybreak[2] \\
& \boldsymbol{\mathrm R}^p_{(l)}\left(\bm Y_{n+1,(l)}\right) := \left\lbrace \mathbf B^p\left( M_B; \bm Y_{n+1, (l)}, \bm Y_{n} \right) \right \rbrace, \nonumber \displaybreak[2] \\
& \boldsymbol{\mathrm R}^m_{(l)}\left(\bm Y_{n+1,(l)}\right) := \left\lbrace \mathbf B^m\left( N_A \bm E_i; \bm Y_{n+1, (l)}, \bm Y_{n} \right) \right \rbrace.
\end{align*}
The consistent tangent matrix can be reduced to a segregated solution approach \cite{Liu2018,Liu2019a,Rossi2016}. The increments of the velocity and pressure are determined from the following linear problem,
\begin{align}
\label{eq:reduced-NR-linear-system}
\begin{bmatrix}
\boldsymbol{\mathrm K}^m_{(l), \bm V} + \frac{\Delta t_n}{2} \boldsymbol{\mathrm K}^m_{(l), \bm U} & \boldsymbol{\mathrm K}^m_{(l), P} \\[0.5mm]
\boldsymbol{\mathrm K}^p_{(l), \bm V} + \frac{\Delta t_n}{2} \boldsymbol{\mathrm K}^p_{(l), \bm U} &  \boldsymbol{\mathrm O}
\end{bmatrix} 
\begin{bmatrix}
\Delta \bm V_{n+1,(l)} \\[0.6mm]
\Delta P_{n+1,(l)}
\end{bmatrix}
= -
\begin{bmatrix}
\boldsymbol{\mathrm R}^m_{(l)} - \Delta t_n \boldsymbol{\mathrm K}^m_{(l), \bm U} \boldsymbol{\mathrm R}^k_{(l)} \\[0.38mm]
\boldsymbol{\mathrm R}^p_{(l)} - \Delta t_n \boldsymbol{\mathrm K}^p_{(l), \bm U} \boldsymbol{\mathrm R}^k_{(l)}
\end{bmatrix},
\end{align}
and the increment of displacement can be determined by
\begin{align}
\label{eq:reduced-displacement-increment}
\Delta \bm U_{n+1,(l)} = \frac{\Delta t_n}{2} \Delta \bm V_{n+1,(l)} - \Delta t_n \boldsymbol{\mathrm R}^k_{(l)}.
\end{align}
\begin{proposition}
\label{prop:kinematic-residual-remain-zero}
If the values of the solution vector at $l=0$ are chosen to be 
\begin{align*}
\bm Y_{n+1,(0)} = \left\lbrace  \bm U_{n}+\Delta t_n \bm V_n, P_{n}, \bm V_{n} \right\rbrace^T,
\end{align*}
one has $\boldsymbol{\mathrm R}^k_{(l)} = \bm 0$ for all $l \geq 0$.
\end{proposition}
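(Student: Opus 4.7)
The plan is to verify the claim by induction on $l$, with the base case handled by direct substitution of the chosen predictor and the inductive step handled by plugging the displacement increment formula \eqref{eq:reduced-displacement-increment} into the definition of the kinematic residual.

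For the base case $l=0$, I would substitute $\bm U_{n+1,(0)} = \bm U_n + \Delta t_n \bm V_n$ and $\bm V_{n+1,(0)} = \bm V_n$ directly into
\begin{align*}
\boldsymbol{\mathrm R}^k_{(l)} = \frac{\bm U_{n+1,(l)} - \bm U_n}{\Delta t_n} - \frac{\bm V_{n+1,(l)} + \bm V_n}{2},
\end{align*}
which collapses to $\bm V_n - \bm V_n = \bm 0$, confirming $\boldsymbol{\mathrm R}^k_{(0)} = \bm 0$.

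For the inductive step, assume $\boldsymbol{\mathrm R}^k_{(l)} = \bm 0$. The Newton update at iteration $l$ produces
\begin{align*}
\bm U_{n+1,(l+1)} = \bm U_{n+1,(l)} + \Delta \bm U_{n+1,(l)}, \qquad \bm V_{n+1,(l+1)} = \bm V_{n+1,(l)} + \Delta \bm V_{n+1,(l)},
\end{align*}
with $\Delta \bm U_{n+1,(l)}$ given by \eqref{eq:reduced-displacement-increment}. I would then substitute these updated quantities into the definition of $\boldsymbol{\mathrm R}^k_{(l+1)}$, group the terms not involving the increments to recognise $\boldsymbol{\mathrm R}^k_{(l)}$, and use \eqref{eq:reduced-displacement-increment} to rewrite $\Delta \bm U_{n+1,(l)}/\Delta t_n$ as $\Delta \bm V_{n+1,(l)}/2 - \boldsymbol{\mathrm R}^k_{(l)}$. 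The cancellation is entirely algebraic: one obtains $\boldsymbol{\mathrm R}^k_{(l+1)} = \boldsymbol{\mathrm R}^k_{(l)} - \boldsymbol{\mathrm R}^k_{(l)} = \bm 0$. In fact the computation shows the slightly stronger statement that $\boldsymbol{\mathrm R}^k_{(l+1)} = \bm 0$ regardless of the value of $\boldsymbol{\mathrm R}^k_{(l)}$, since the correction $-\Delta t_n \boldsymbol{\mathrm R}^k_{(l)}$ in \eqref{eq:reduced-displacement-increment} was precisely introduced to ensure this invariance.

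There is no real obstacle here beyond bookkeeping; the statement is essentially an algebraic identity built into the design of \eqref{eq:reduced-displacement-increment}. The only point that needs attention is the convention used for $\boldsymbol{\mathrm R}^k$, namely whether it is defined with or without the factor $1/\Delta t_n$, so that the cancellation between the $\Delta \bm U_{n+1,(l)}/\Delta t_n$ and $\Delta \bm V_{n+1,(l)}/2$ contributions is consistent with the update formula. Once that convention is fixed, the induction closes in a single line.
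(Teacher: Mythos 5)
Your proof is correct and follows essentially the same route as the paper: verify $\boldsymbol{\mathrm R}^k_{(0)}=\bm 0$ by direct substitution of the predictor, then substitute the displacement increment into the kinematic residual to show it is preserved (indeed annihilated) from one Newton iteration to the next, closing by induction. The only cosmetic difference is that the paper's induction step uses the simplified increment $\Delta \bm U_{n+1,(l)} = \tfrac{\Delta t_n}{2}\Delta \bm V_{n+1,(l)}$, which coincides with \eqref{eq:reduced-displacement-increment} once $\boldsymbol{\mathrm R}^k_{(l)}=\bm 0$, whereas you retain the correction term $-\Delta t_n \boldsymbol{\mathrm R}^k_{(l)}$ and thereby observe the slightly stronger invariance that $\boldsymbol{\mathrm R}^k_{(l+1)}=\bm 0$ regardless of $\boldsymbol{\mathrm R}^k_{(l)}$.
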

\begin{proof}
With the given predictor, it can be shown that
\begin{align*}
\boldsymbol{\mathrm R}^k_{(0)} = \frac{\bm U_{n+1,(0)} - \bm U_n}{\Delta t_n} - \frac{1}{2}\left( \bm V_{n+1,(0)} + \bm V_n \right) = \bm 0.
\end{align*}
Given an increment of $\Delta \bm V_{n+1,(l)}$, the increment of the displacement field is given by 
\begin{align*}
\Delta \bm U_{n+1,(l)} = \frac{\Delta t_n}{2} \Delta \bm V_{n+1,(l)}.
\end{align*}
Consequently, one has
\begin{align*}
\boldsymbol{\mathrm R}^k_{(l+1)} =& \frac{\bm U_{n+1,(l+1)} - \bm U_n}{\Delta t_n} - \frac12 \left( \bm V_{n+1,(l+1)} + \bm V_n \right) \displaybreak[2] \nonumber \\
=& \frac{\bm U_{n+1,(l)} - \bm U_n + \Delta \bm U_{n+1,(l)}}{\Delta t_n} - \frac12 \left( \bm V_{n+1,(l)} + \bm V_n + \Delta \bm V_{n+1,(l)} \right) \displaybreak[2] \nonumber \\
=& \boldsymbol{\mathrm R}^k_{(l)} + \frac{1}{\Delta t_n} \Delta \bm U_{n+1,(l)} - \frac12 \Delta \bm V_{n+1,(l)} \displaybreak[2] \nonumber \\
=& \boldsymbol{\mathrm R}^k_{(l)}.
\end{align*}
By mathematical induction, it can be summarized that, with the choice of the predictor, the residual vector $\boldsymbol{\mathrm R}^k_{(l)}$ will remain zero for all iterations. 
\end{proof}

The result of Proposition \ref{prop:kinematic-residual-remain-zero} significantly simplifies the assembly of the right-hand side of \eqref{eq:reduced-NR-linear-system}. We may now summarize our above discussion into the following segregated predictor multi-corrector algorithm.

\begin{myenv}{Segregated predictor multi-corrector algorithm}
\noindent \textbf{Predictor stage:} Set
\begin{align*}
\bm Y_{n+1, (0)} = \left\lbrace  \bm U_{n}+\Delta t_n \bm V_n, P_{n}, \bm V_{n} \right\rbrace^T.
\end{align*}

\noindent \textbf{Multi-corrector stage:} Repeat the following steps for $l=0, 1, ..., l_{max}$
\begin{enumerate}
    \item Assemble the residual vector $\boldsymbol{\mathrm R}^m_{(l)}$ and $\boldsymbol{\mathrm R}^p_{(l)}$ using $\bm Y_{n+1, (l)}$ and $\bm Y_{n}$.
    \item Let $\| \boldsymbol{\mathrm R}_{(l)} \|_{\mathfrak l_2}$ denote the $\mathfrak l_2$-norm of the residual vector
\begin{align*}
\boldsymbol{\mathrm R}_{(l)} := \left\lbrace \bm 0, \boldsymbol{\mathrm R}^p_{(l)}, \boldsymbol{\mathrm R}^m_{(l)} \right\rbrace^T,
\end{align*}    
    and let $\mathrm{tol}_{\mathrm{R}}$ and $\mathrm{tol}_{\mathrm{A}}$ denote the prescribed relative and absolute tolerances, respectively. If either of the following stopping criteria
    \begin{align*}
    \frac{\| \boldsymbol{\mathrm R}_{(l)} \|_{\mathfrak l_2}}{\| \boldsymbol{\mathrm R}_{(0)} \|_{\mathfrak l_2}} \leq \mathrm{tol}_{\mathrm{R}}, \qquad
    \| \boldsymbol{\mathrm R}_{(l)} \|_{\mathfrak l_2} \leq \mathrm{tol}_{\mathrm{A}},
    \end{align*}
    is satisfied, set 
    \begin{align*}
    \bm Y_{n+1} = \bm Y_{n+1, (l)},
    \end{align*}
    and exit the multi-corrector stage. Otherwise, continue to step 4.
    \item Assemble the following sub-tangent matrices,
    \begin{align*}
    \boldsymbol{\mathrm A}_{(l)} := \boldsymbol{\mathrm K}^m_{(l), \bm V} + \frac{\Delta t_n}{2} \boldsymbol{\mathrm K}^m_{(l), \bm U}, \quad
    \boldsymbol{\mathrm B}_{(l)} := \boldsymbol{\mathrm K}^m_{(l), P}, \quad
    \boldsymbol{\mathrm C}_{(l)} := \boldsymbol{\mathrm K}^p_{(l), \bm V} + \frac{\Delta t_n}{2} \boldsymbol{\mathrm K}^p_{(l), \bm U}.
    \end{align*}
    \item Solve the following linear system for $\Delta \bm V_{n+1, (l)}$ and $\Delta P_{n+1, (l)}$,
    \begin{align}
    \label{eq:pred_multi_correct_linear_system}
    \begin{bmatrix}
    \boldsymbol{\mathrm A}_{(l)} & \boldsymbol{\mathrm B}_{(l)} \\[1mm]
    \boldsymbol{\mathrm C}_{(l)} & \boldsymbol{\mathrm O}
    \end{bmatrix}
    \begin{bmatrix}
    \Delta \bm V_{n+1, (l)} \\[1mm]
    \Delta P_{n+1, (l)}
    \end{bmatrix} = -
    \begin{bmatrix}
    \boldsymbol{\mathrm R}^m_{(l)} \\[1mm]
    \boldsymbol{\mathrm R}^p_{(l)}
    \end{bmatrix}.
    \end{align}
    \item Obtain $\Delta \bm U_{n+1, (l)}$ from $\Delta \bm V_{n+1, (l)}$ via the relation
    \begin{align*}
    \Delta \bm U_{n+1, (l)} = \frac{\Delta t_n}{2}\Delta \bm V_{n+1, (l)}.
    \end{align*}
    \item Update the solution vector as
    \begin{align*}
    & \bm Y_{n+1, (l+1)} = \bm Y_{n+1, (l)} + \Delta \bm Y_{n+1, (l)}.
    \end{align*}
\end{enumerate}
\end{myenv}

\subsection{The algorithm for the constitutive integration}
\label{subsec:algorithm-constitutive-integration}
In the above predictor multi-corrector algorithm, one needs to repeatedly evaluate the stress and elasticity tensor for constructing the linear system \eqref{eq:pred_multi_correct_linear_system}. We outline the algorithm for the evaluation of stress using the second-order formula given in Section \ref{sec:second-order-update-formula-ISV}. For notational simplicity, the following discussion will neglect the subscript $(l)$ that represents the iteration of the Newton-Raphson method.

\begin{myenv}{Algorithmic stress evaluation algorithm}
\noindent \textbf{Input:} Given the time step size $\Delta t_n$, the internal state variables $\lbrace \bm \Gamma^{\alpha}_{n} \rbrace_{\alpha=1}^{m}$, and
\begin{align*}
\bm Y_{n+1} = \left\lbrace  \bm U_{n+1}, P_{n+1}, \bm V_{n+1} \right\rbrace^T.
\end{align*}

\begin{enumerate}
    \item Calculate the deformation gradient and the strain measures based on the displacement $\bm U_{n+1}$,
    \begin{gather*}
    \bm F_{n+1} = \bm I + \nabla_{\bm X} \bm U_{n+1}, \quad J_{n+1} = \mathrm{det}\left( \bm F_{n+1} \right), \quad \bm C_{n+1} = \bm F_{n+1}^T \bm F_{n+1}, \quad \tilde{\bm C}_{n+1} = J_{n+1}^{-2/3} \bm C_{n+1}, \displaybreak[2] \\
    \bm C_{n+\frac12} = \frac12 \left( \bm C_{n+1} + \bm C_{n} \right), \quad  \tilde{\bm C}_{n+\frac12} = \mathrm{det}\left( \bm C_{n+\frac12} \right)^{-\frac13} \bm C_{n+\frac12}.
    \end{gather*}
    \item Calculate the following fictitious stresses and elasticity tensors for $\alpha = 1, \cdots, m$,
    \begin{align*}
    & \tilde{\bm S}^{\alpha}_{\mathrm{iso} \: n+1} = 2 \frac{\partial G^{\alpha}}{\partial \tilde{\bm C}}(\tilde{\bm C}_{n+1}), \quad \tilde{\bm S}^{\alpha}_{\mathrm{iso} \: n} = 2 \frac{\partial G^{\alpha}}{\partial \tilde{\bm C}}(\tilde{\bm C}_{n}).
    \end{align*}
    
    \item Calculate the internal state variables
    \begin{align*}
    & \bm \Gamma^{\alpha}_{n+1} = \frac{\Delta t_n}{\eta^{\alpha} + \mu^{\alpha} \Delta t_n / 2} \left( \frac{\tilde{\bm S}_{\mathrm{iso} \: n+1}^{\alpha} + \tilde{\bm S}_{\mathrm{iso} \: n}^{\alpha}}{2} - \hat{\bm S}^{\alpha}_{0} - \left( \frac{\mu^{\alpha}}{2} - \frac{\eta^{\alpha}}{\Delta t_n} \right) \bm \Gamma^{\alpha}_n  + \mu^{\alpha} \bm I \right), \\
    & \bm \Gamma^{\alpha}_{n+\frac12} = \frac12 \left(\bm \Gamma^{\alpha}_{n+1} + \bm \Gamma^{\alpha}_{n} \right).
    \end{align*}
    
    \item Calculate the following stresses,
    \begin{gather*}
    \tilde{\bm S}^{\infty}_{\mathrm{iso} \: n+\frac12} = 2 \frac{\partial G^{\infty}_{\mathrm{iso}}}{\partial \tilde{\bm C}}\left( \tilde{\bm C}_{n+\frac12} \right), \quad \tilde{\bm S}^{\alpha}_{\mathrm{iso} \: n+\frac12} = 2 \frac{\partial G^{\alpha}}{\partial \tilde{\bm C}}(\tilde{\bm C}_{n+\frac12}), \displaybreak[2] \\
    \tilde{\bm S}^{\alpha}_{\mathrm{neq} \: n+\frac12} = \left(  \frac{\partial \tilde{\bm S}^{\alpha}_{\mathrm{iso}}}{\partial \tilde{\bm C}}(\tilde{\bm C}_{n+\frac12}) \right) : \left( \frac{1}{\mu^{\alpha}} \left( \tilde{\bm S}^{\alpha}_{\mathrm{iso} \: n+\frac12} - \hat{\bm S}^{\alpha}_{0}\right) - \bm \Gamma^{\alpha}_{n+\frac12} + \bm I \right).
    \end{gather*}
    
    \item Calculate the fictitious stress
    \begin{align*}
    \tilde{\bm S}_{n+\frac12} = \tilde{\bm S}^{\infty}_{\mathrm{iso} \: n+\frac12} + \sum_{\alpha=1}^{m} \tilde{\bm S}^{\alpha}_{\mathrm{neq} \: n+\frac12}.
    \end{align*}
    
    \item Calculate the projection tensor
    \begin{align*}
    \mathbb P_{n+\frac12} = \mathbb I - \frac13 \bm C^{-1}_{n+\frac12} \otimes \bm C_{n+\frac12}
    \end{align*}
    and the isochoric part of the second Piola-Kirchhoff stress
    \begin{align*}
    \bm S_{\mathrm{iso} \: n+\frac12} = \mathrm{det}\left( \bm C_{n+\frac12} \right)^{-\frac13} \mathbb P_{n+\frac12} : \tilde{\bm S}_{n+\frac12}.
    \end{align*}
    
    \item Calculate $\bm Z_n = \left( \bm C_{n+1} - \bm C_n \right)/2$ and the energies
    \begin{align*}
   G_{\mathrm{iso} \: n+1} = G_{\mathrm{iso}}(\tilde{\bm C}_{n+1}, \bm \Gamma^1_{n+\frac12}, \cdots, \bm \Gamma^{m}_{n+\frac12}), \quad G_{\mathrm{iso} \: n} = G_{\mathrm{iso}}(\tilde{\bm C}_{n}, \bm \Gamma^1_{n+\frac12}, \cdots, \bm \Gamma^{m}_{n+\frac12}).
    \end{align*}
    
    \item Calculate the algorithmic stress
    \begin{align*}
    \bm S_{\mathrm{iso} \: \mathrm{alg}} := \bm S_{\mathrm{iso} \: n+\frac12} + \frac{ G_{\mathrm{iso} \: n+1} - G_{\mathrm{iso} \: n} - \bm S_{\mathrm{iso} \: n+\frac12} : \bm Z_{n} }{\left\lvert\bm Z_{n}\right\rvert} \frac{\bm Z_{n}}{\left\lvert \bm Z_{n} \right\rvert }.
    \end{align*}
\end{enumerate}
\end{myenv}

The implementation of the integration algorithm given in Section \ref{sec:first-order-update-formula-ISV} follows the same procedures except that one needs to invoke \eqref{eq:constitutive-integration-Gamma-1} in Step 3. Besides the evaluation of the algorithmic stress, the algorithmic elasticity tensor is also needed in implicit calculations. In \ref{sec:appendix-A} and \ref{sec:appendix-B}, the detailed form of the algorithmic elasticity tensor based on the two constitutive integration formulas is presented, respectively.

\section{Numerical results}
\label{sec:numerical_examples}
Unless otherwise specified, the following choices are made in the numerical investigations.
\begin{enumerate}
\item The meter-kilogram-second system of units is used;
\item The numerical schemes using the constitutive integration algorithms presented in Sections \ref{sec:first-order-update-formula-ISV} and \ref{sec:second-order-update-formula-ISV} are referred to as Scheme-1 and Scheme-2, respectively;
\item The discrete pressure function space is generated by $k$-refinement to achieve the highest possible continuity, and then the degree is elevated with $\mathsf{a} = 1$ and $\mathsf{b}=0$ to construct the velocity function space;
\item We use $\mathsf p + \mathsf a + 2$ Gaussian quadrature points in each direction;
\item We use $\mathrm{tol}_\mathrm{R} = 10^{-10}$, $\mathrm{tol}_\mathrm{A} = 10^{-10}$, $l_{\mathrm{max}} = 10$ as the stopping criteria in the predictor multi-corrector algorithm;
\item When $|\bm{Z}_n| <10^{-10}$, the stress enhancements in the two algorithmic stresses are turned off to avoid floating-point exceptions;
\item The grad-div stabilization parameter $\gamma$ is set to zero.
\end{enumerate}

\begin{table}[htbp]
  \centering 
  \begin{tabular}{ m{.35\textwidth} m{.35\textwidth} }
    \hline
    \begin{minipage}{.35\textwidth}
    \centering
      \includegraphics[width=0.6\linewidth, trim=220 260 200 240, clip]{./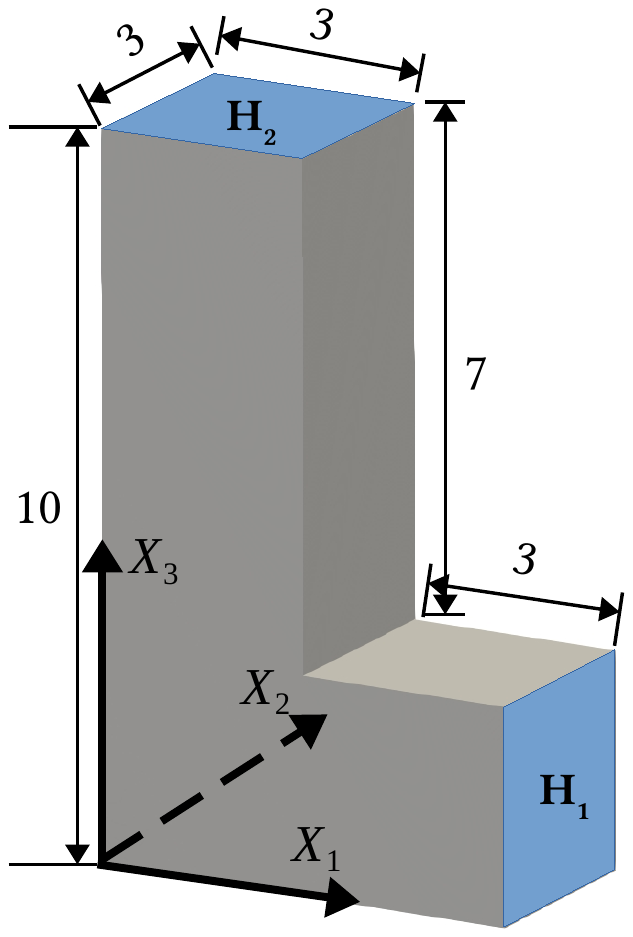}
    \end{minipage}
    &
    \begin{minipage}{.35\textwidth}
      \begin{itemize}
        \item[] Material properties:
        \item[] $G_{\mathrm{iso}}^{\infty} = \frac{c_1}{2}(\tilde{I}_1 -3) + \frac{c_2}{2}(\tilde{I}_2 -3)$
        \item[] $\rho_0 = 1.0\times 10^3$
        \item[] $E = 25000$
        \item[] $c_1=c_2=E/6$
        \item[] $m=1$
        \item[] $\beta^{\infty}_{1}=1$, $\mu^1=c_1$, $\eta^1=0.1 \mu^1$           
      \end{itemize}
    \end{minipage}   
    \\
    \hline
  \end{tabular}
  \caption{The three-dimensional L-shaped block: problem definition. The parameter $\beta^{\infty}_{1}$ is only used in the MIPC model. $\bm{H}_1$ and $\bm{H}_2$ denote the two applied surface tractions.}
\label{table:L-blcok}
\end{table}

\subsection{Tumbling L-shaped block}
We investigate the numerical performances of our proposed schemes by considering the tumbling L-shaped block problem \cite{Simo1992c}. The geometry of the L-shaped domain is constructed by three NURBS patches. The Mooney-Rivlin model is used to describe the material behavior of the equilibrium part, and two finite linear viscoelastic models, i.e., the HS and MIPC models, are of interest here. The problem definition is summarized in Table \ref{table:L-blcok}. The block is subjected to two traction loads for $0<t<5$ as follows,
\begin{gather*}
\bm{H}_1 = \mathcal H \times [-250, 100, -300]^T, \quad \bm{H}_2 = \mathcal H \times [150, -250, 350]^T, \quad
\mathcal H := 
\begin{cases}
t, & 0 \leq t \leq 2.5, \\
5-t, &  2.5 < t \leq 5, \\
0, & 5 < t.
\end{cases}
\end{gather*}
We discretize the geometry with 180 elements and $\mathsf p=1$. We have utilized finer meshes and higher-order basis functions to guarantee that the presented results are independent of the spatial mesh. The problem is integrated up to $T = 100$ with a uniform time step size $\Delta t_n = 0.1$. Figure \ref{fig:Ldoamin_deformation} displays deformed configurations at selected time instances with the pressure distributions depicted. 

\begin{figure}
\begin{center}
\begin{tabular}{cccc}
\includegraphics[angle=0, trim=250 100 100 160, clip=true, scale = 0.066]{./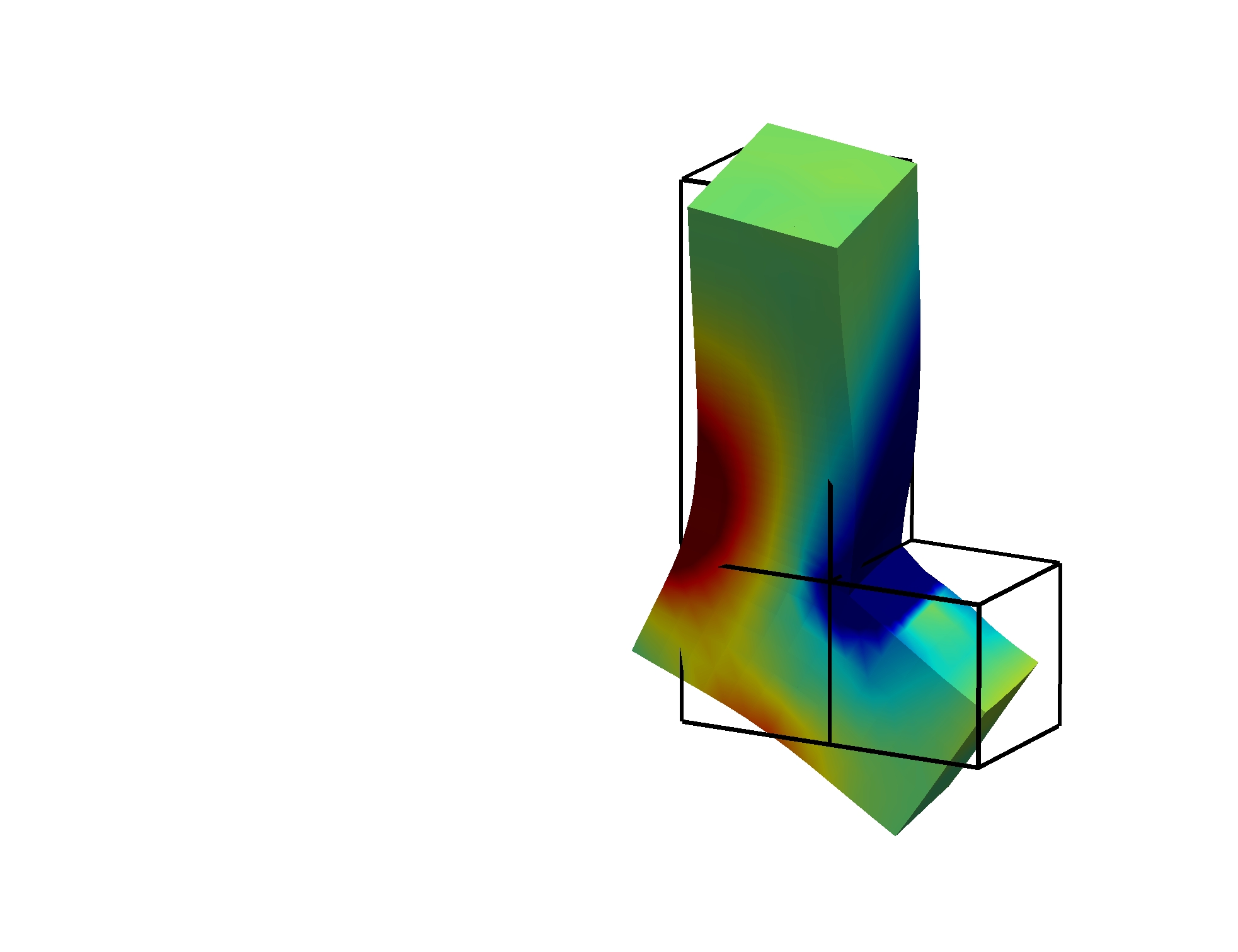} &
\includegraphics[angle=0, trim=250 100 100 160, clip=true, scale = 0.066]{./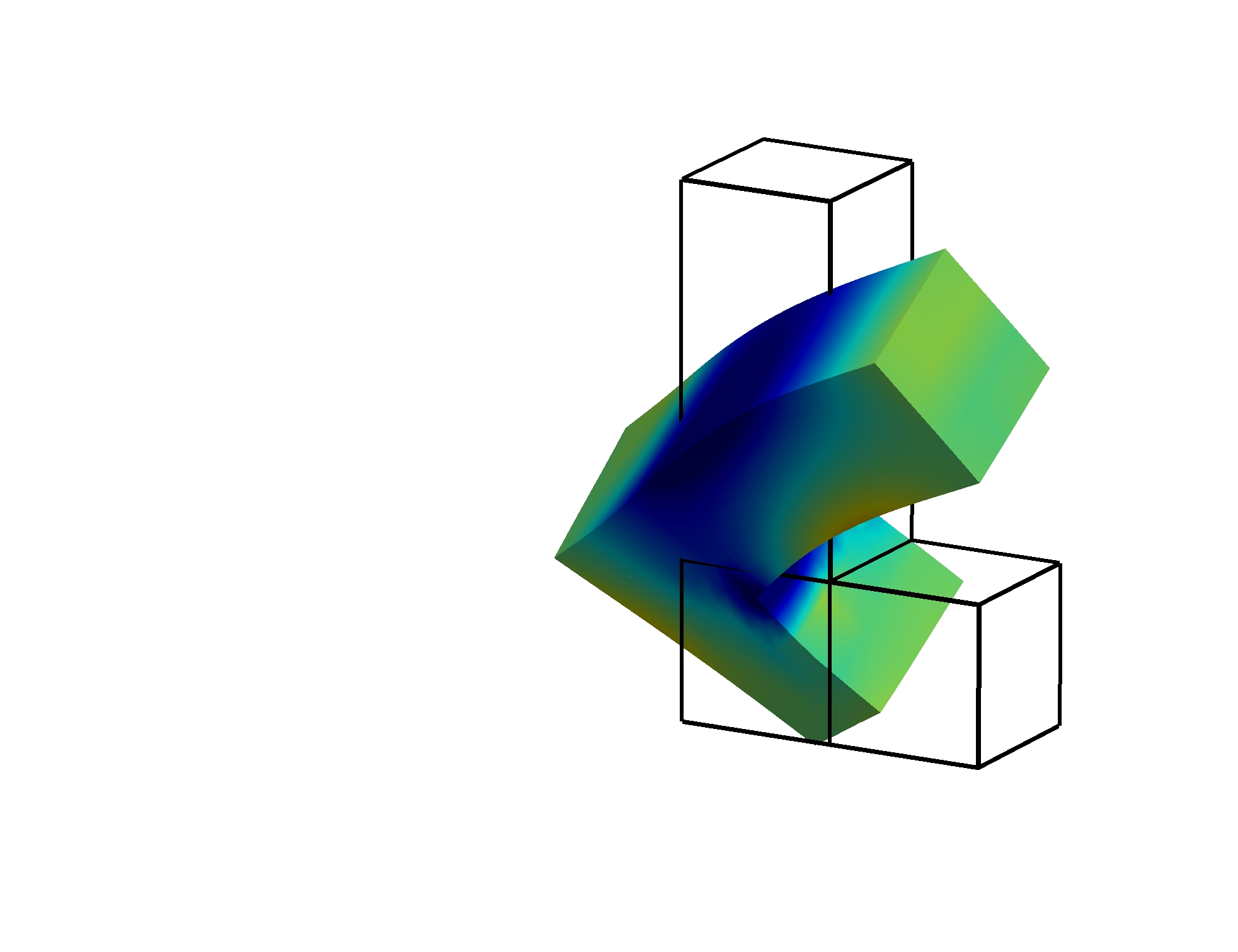} &
\includegraphics[angle=0, trim=250 100 100 160, clip=true, scale = 0.066]{./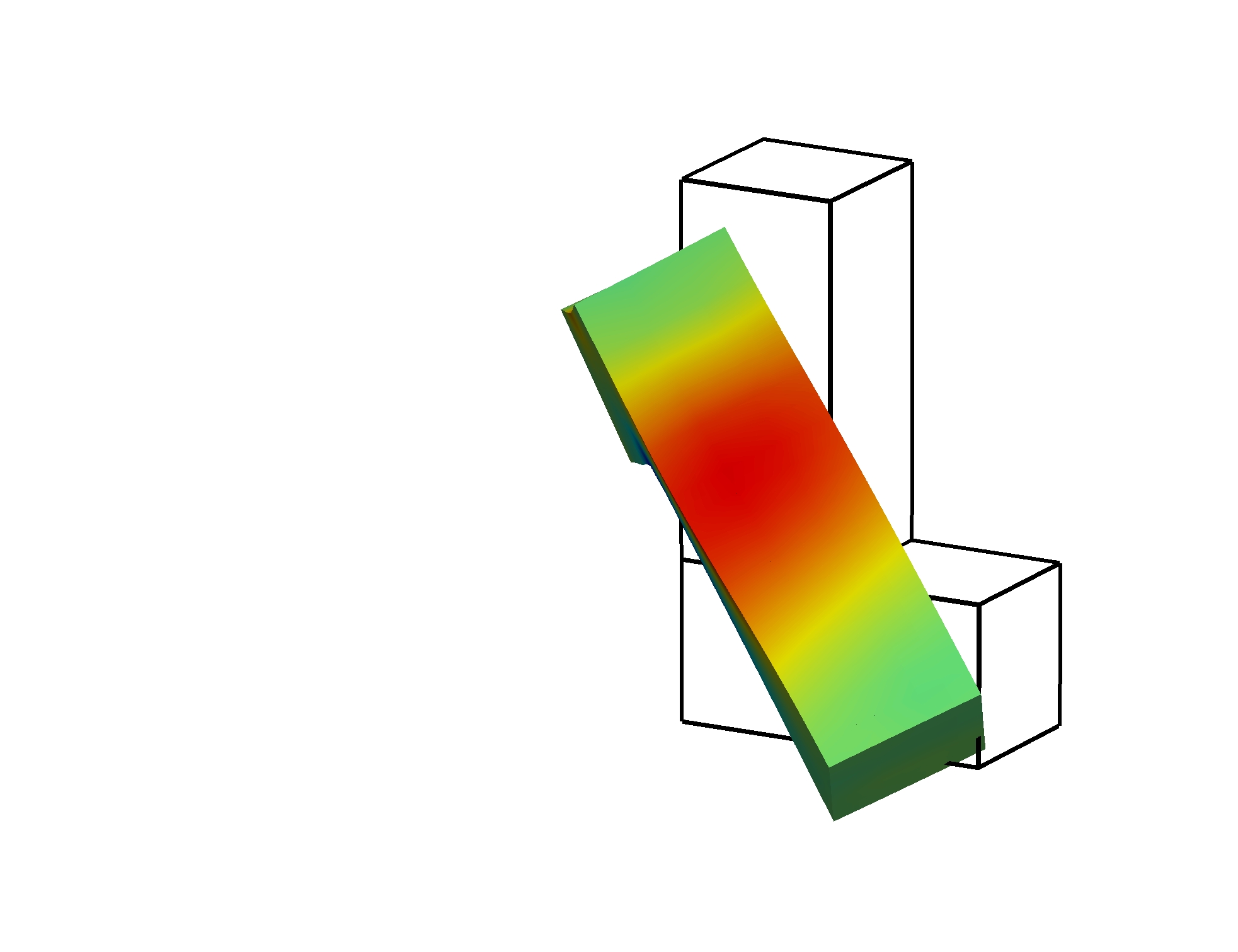} &
\includegraphics[angle=0, trim=250 100 100 160, clip=true, scale = 0.066]{./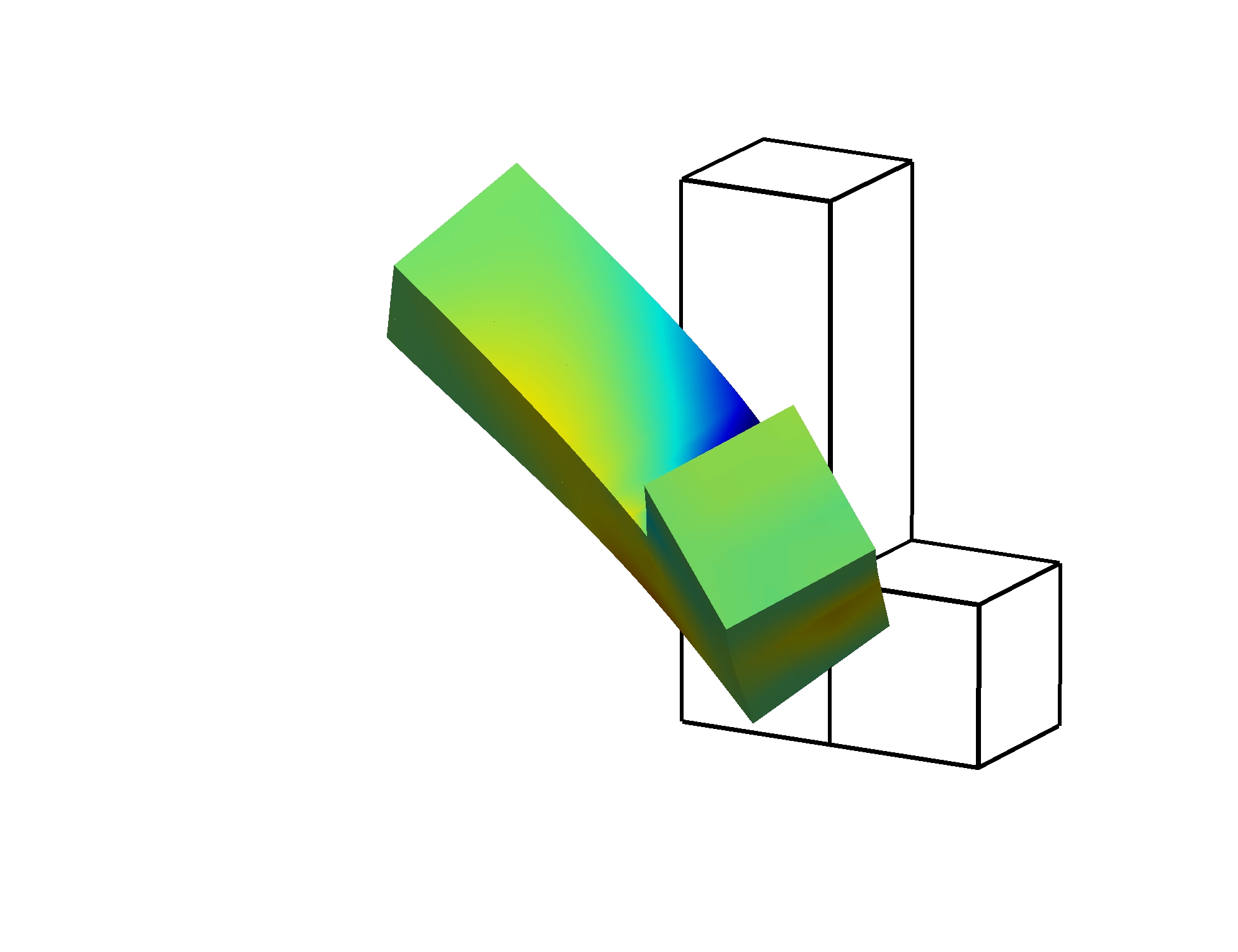} \\
$t = 5$ & $10$ & $20$ & $40$\\
\includegraphics[angle=0, trim=250 100 100 160, clip=true, scale = 0.066]{./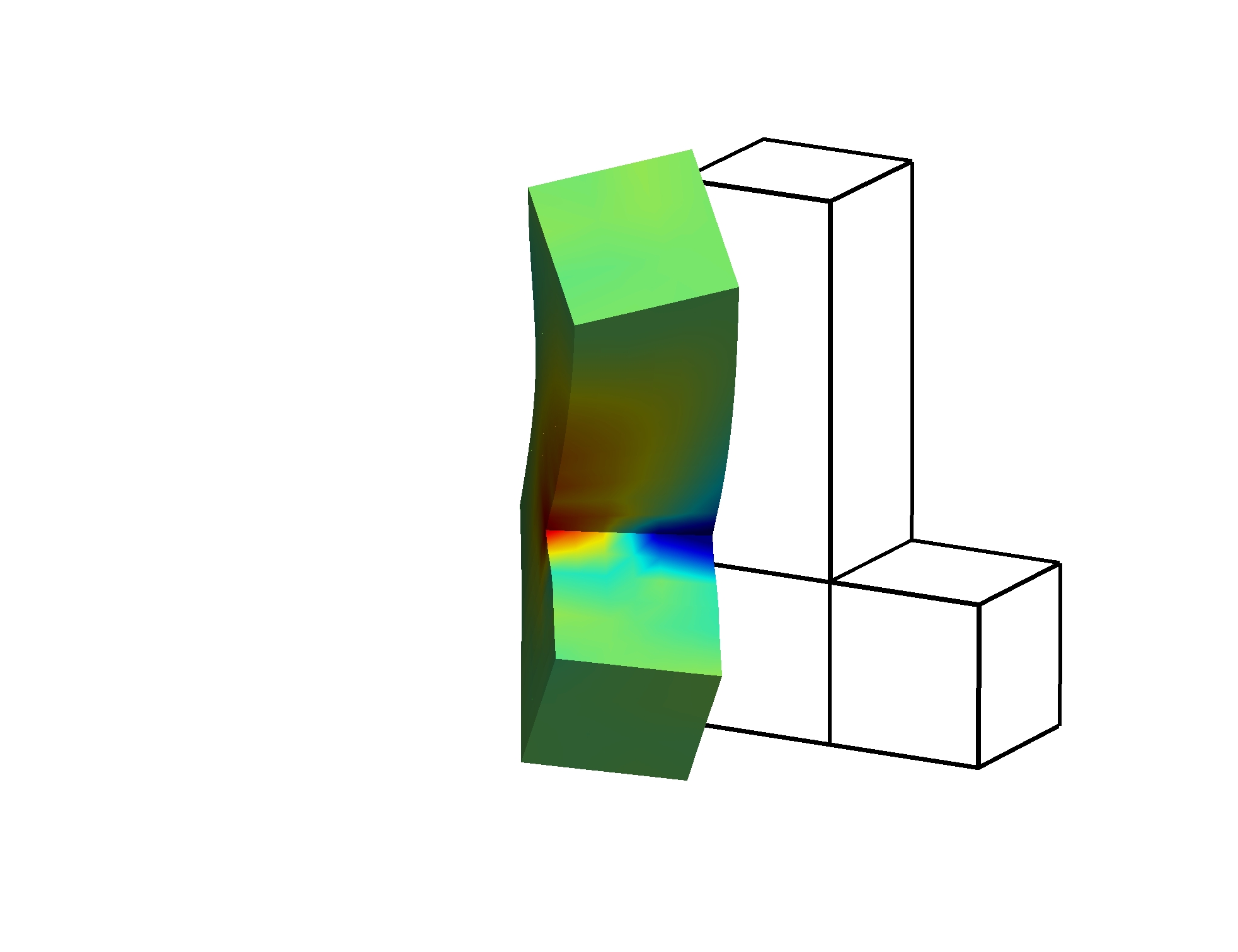} &
\includegraphics[angle=0, trim=250 100 100 160, clip=true, scale = 0.066]{./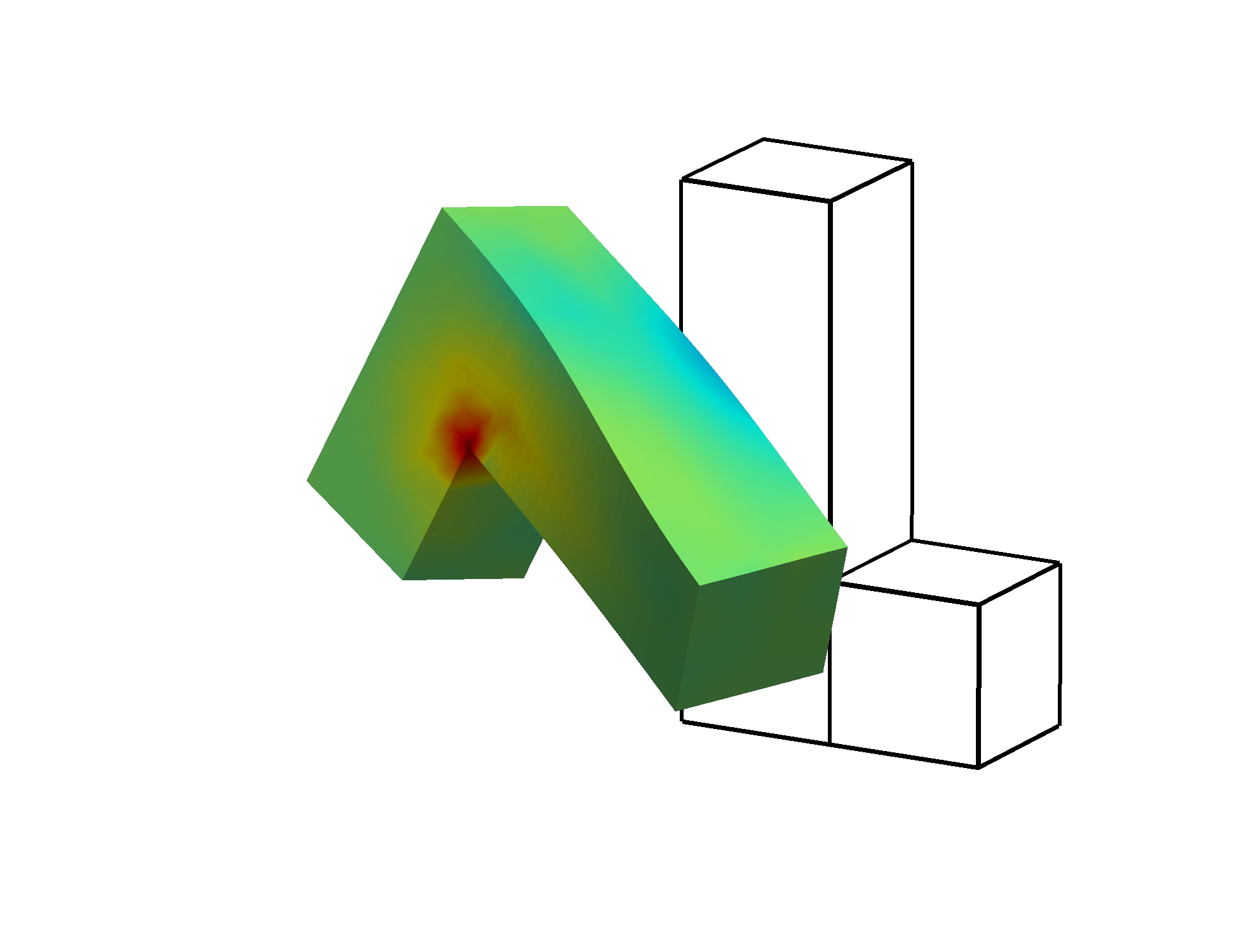} &
\includegraphics[angle=0, trim=250 100 100 160, clip=true, scale = 0.066]{./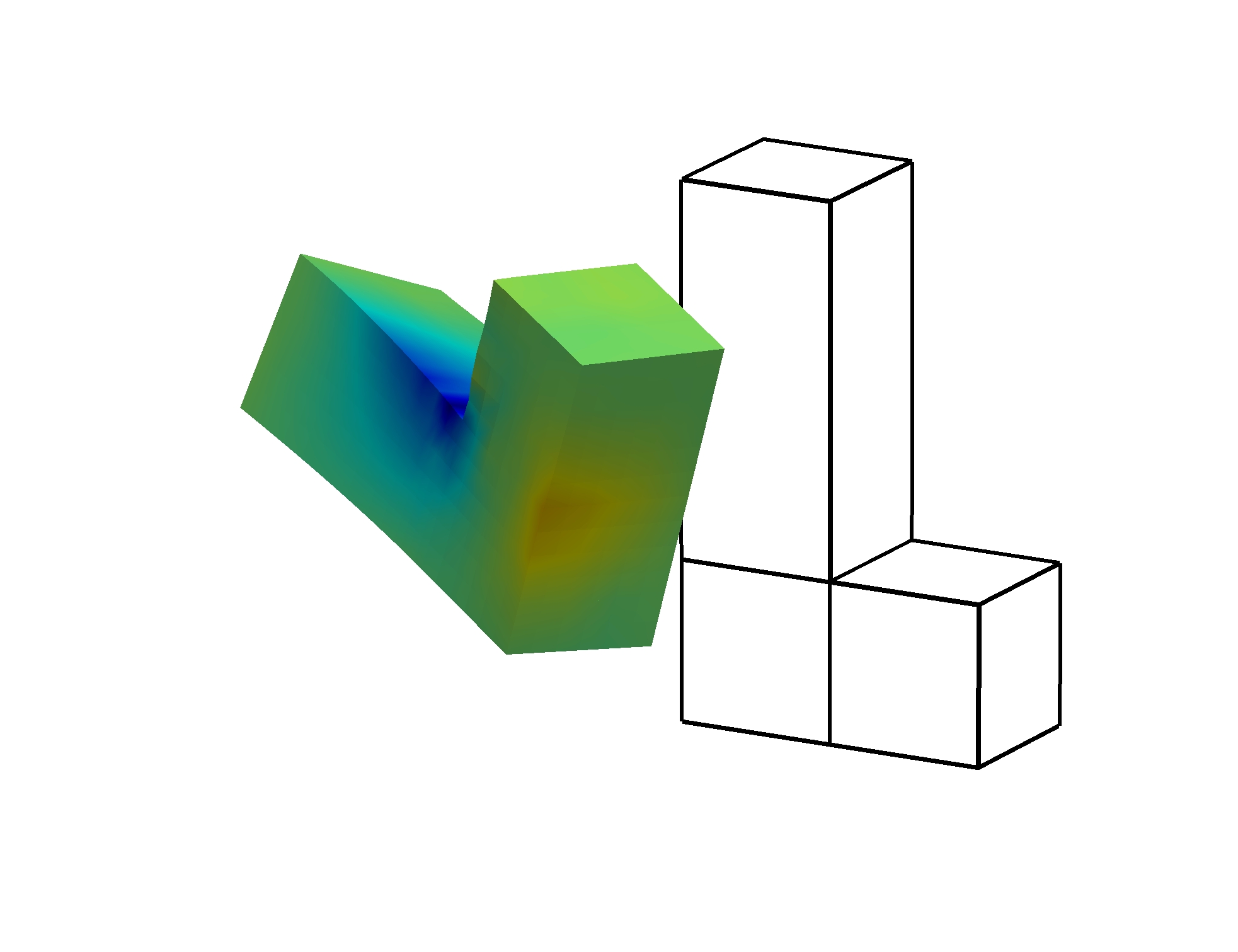} &
\includegraphics[angle=0, trim=250 100 100 160, clip=true, scale = 0.066]{./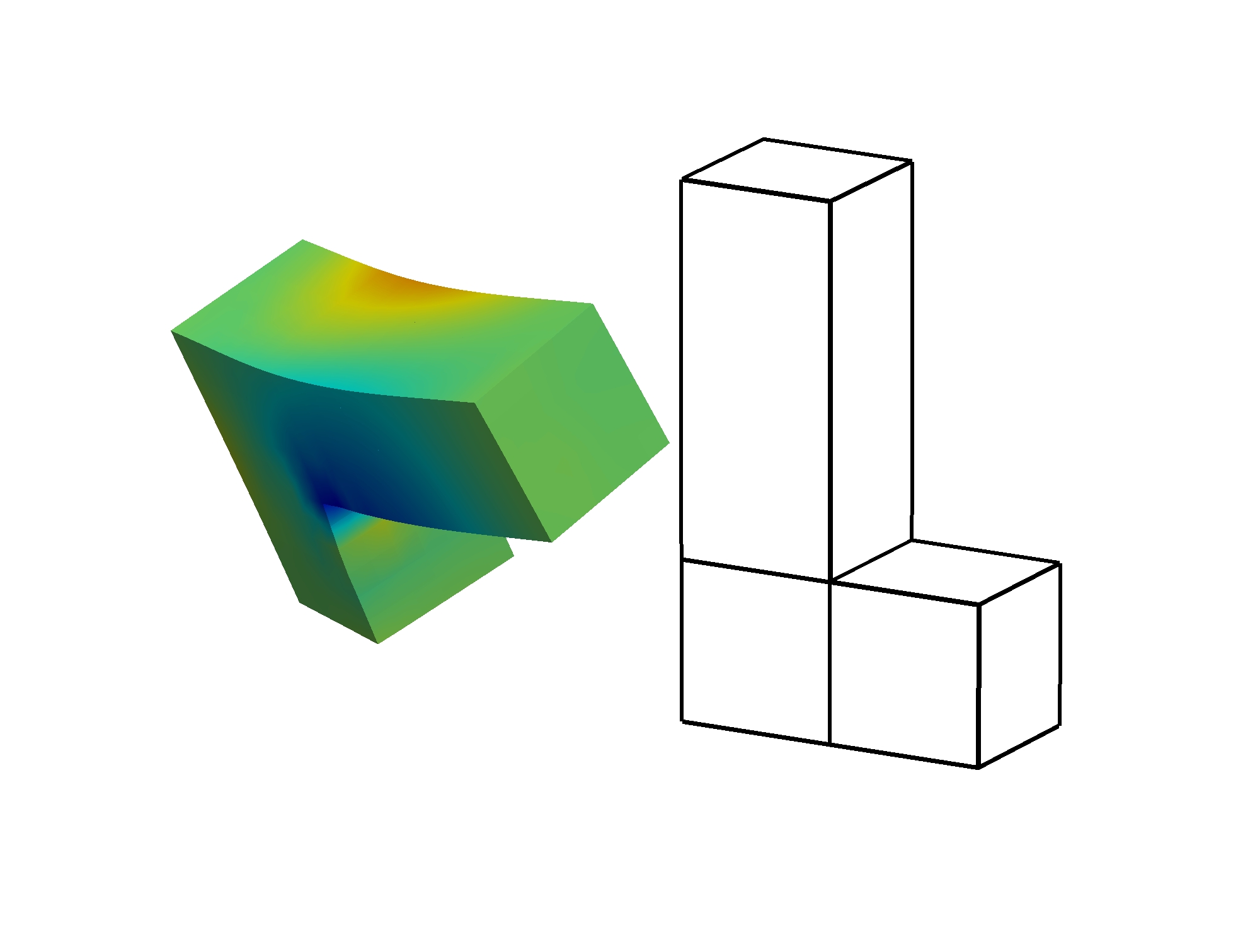} \\
$50$ & $60$ & $80$ & $100$\\
\multicolumn{4}{c}{ \includegraphics[angle=0, trim=320 430 320 720, clip=true, scale = 0.25]{./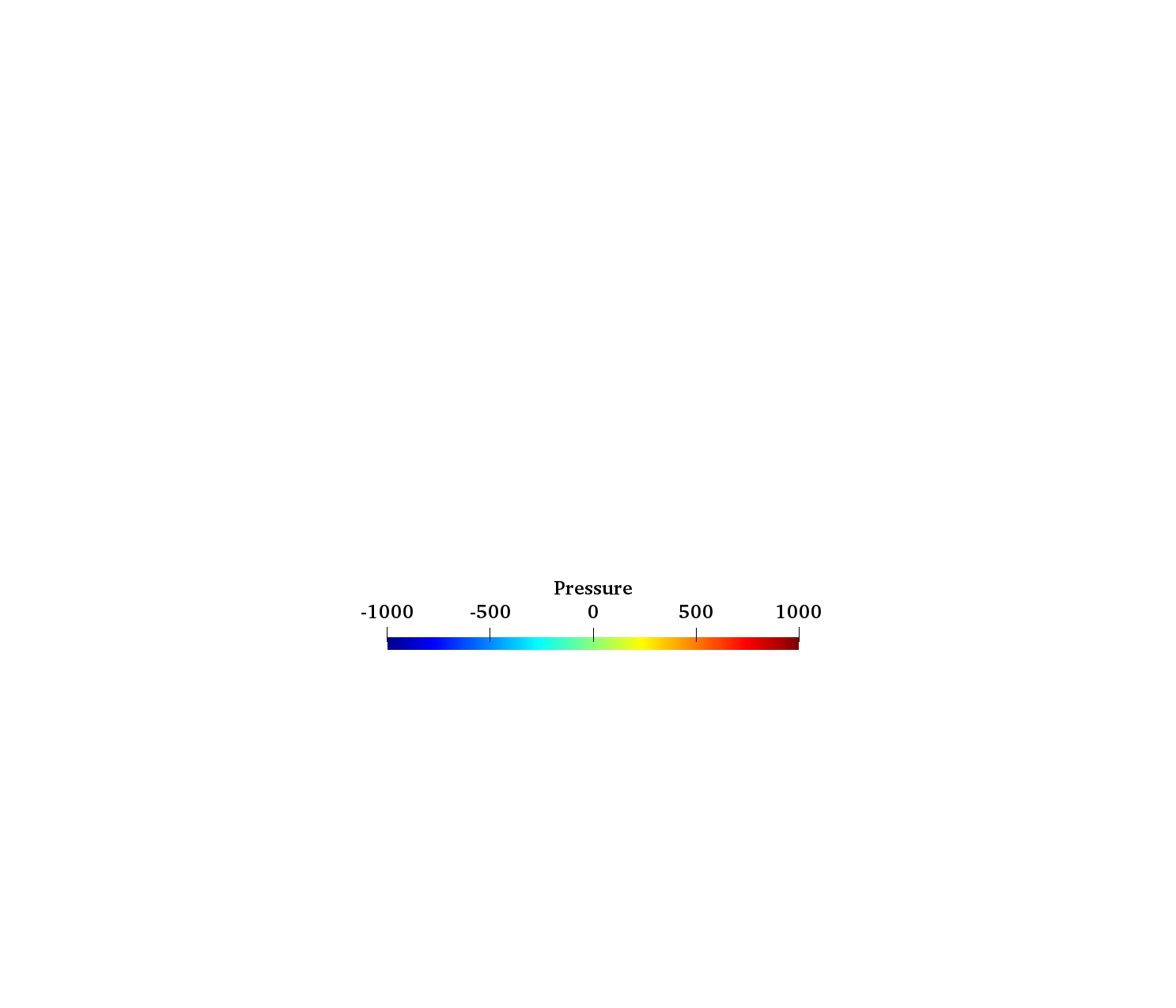} }
\end{tabular}
\caption{Snapshots of the deformed L-shaped block with the pressure distribution. The black grid depicts the initial configuration.}
\label{fig:Ldoamin_deformation}
\end{center}
\end{figure}

\begin{figure}
\begin{center}
\begin{tabular}{cc}
\includegraphics[angle=0, trim=80 50 100 100, clip=true, scale = 0.23]{./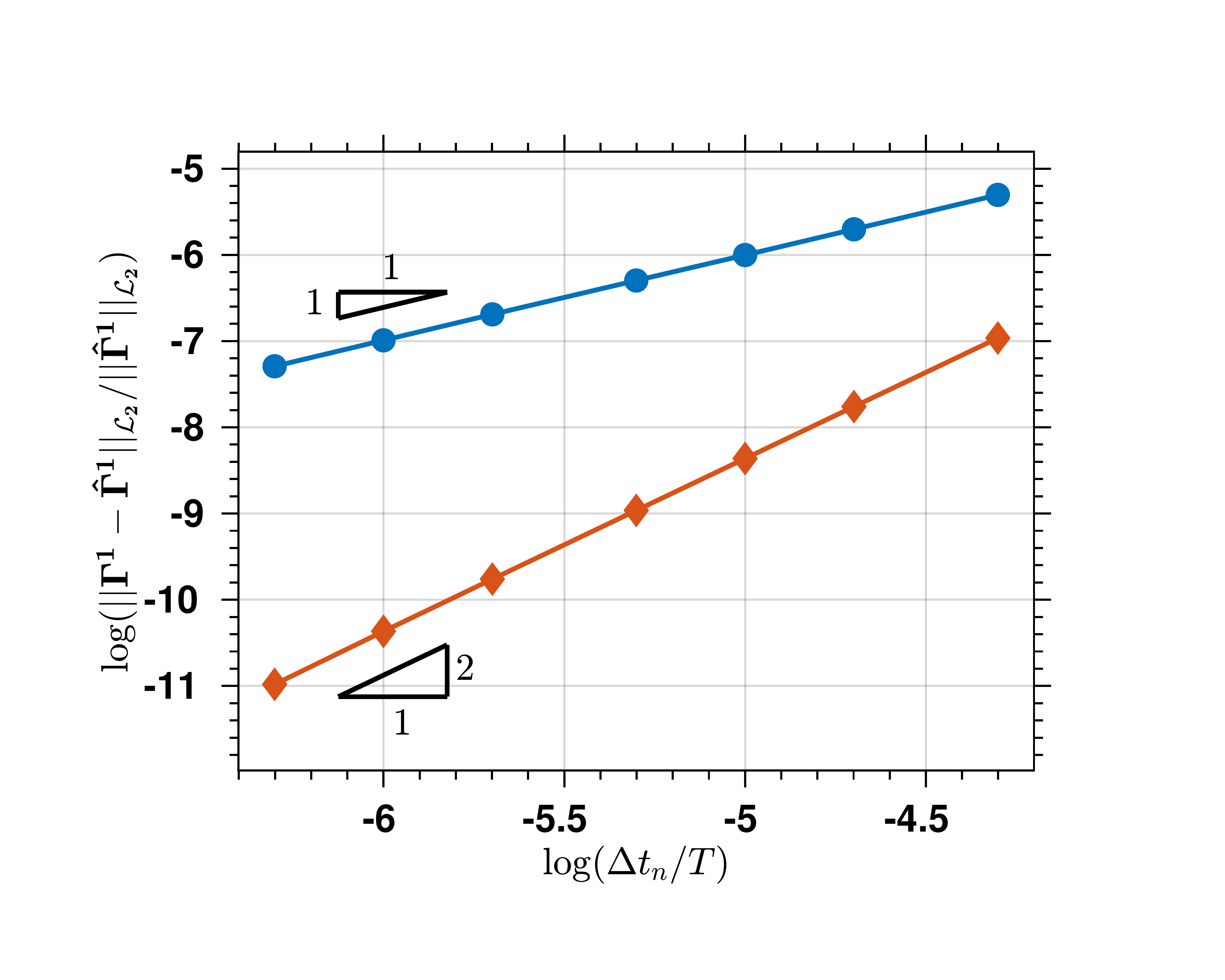} &
\includegraphics[angle=0, trim=80 50 100 100, clip=true, scale = 0.23]{./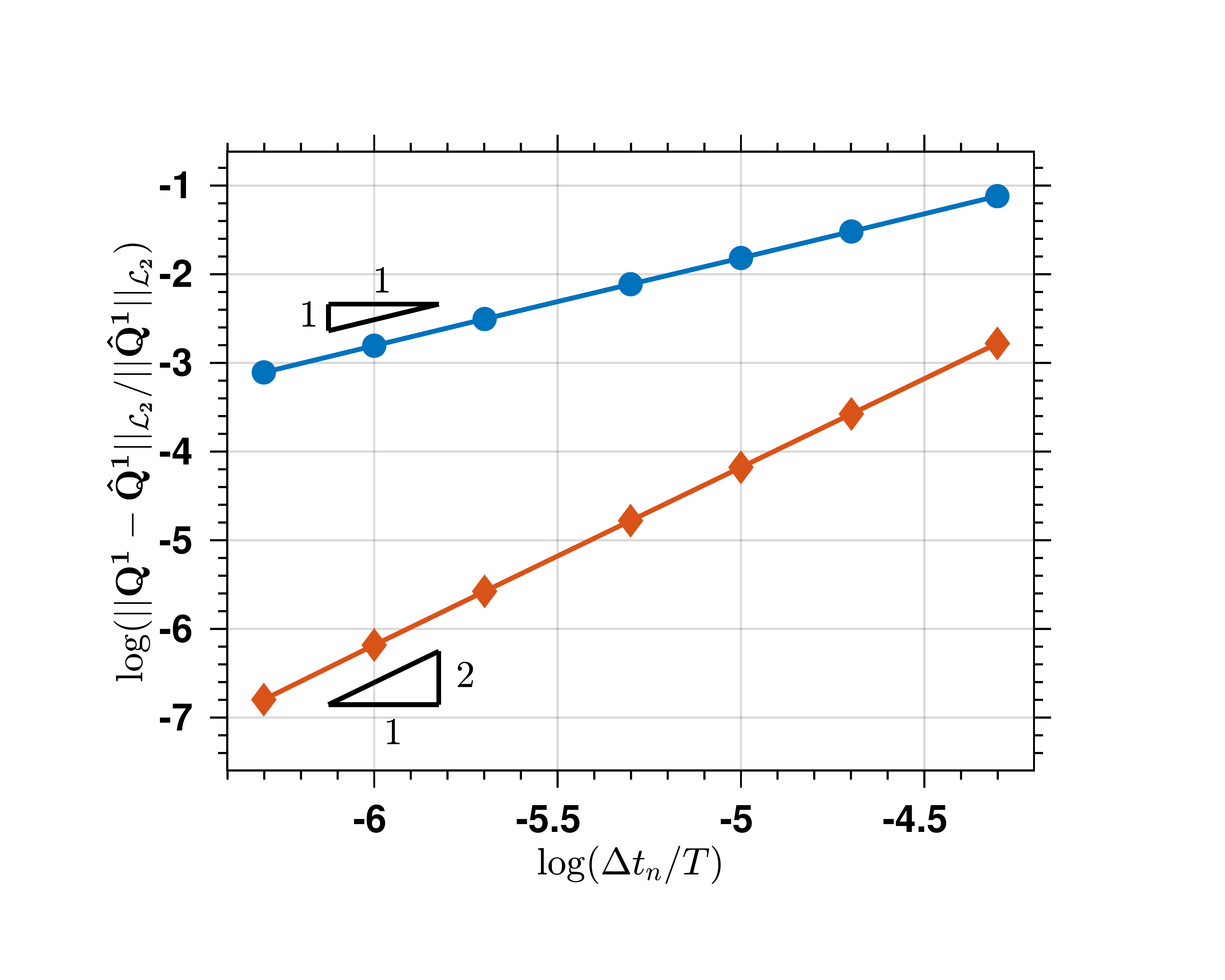} \\
\includegraphics[angle=0, trim=80 50 100 80, clip=true, scale = 0.23]{./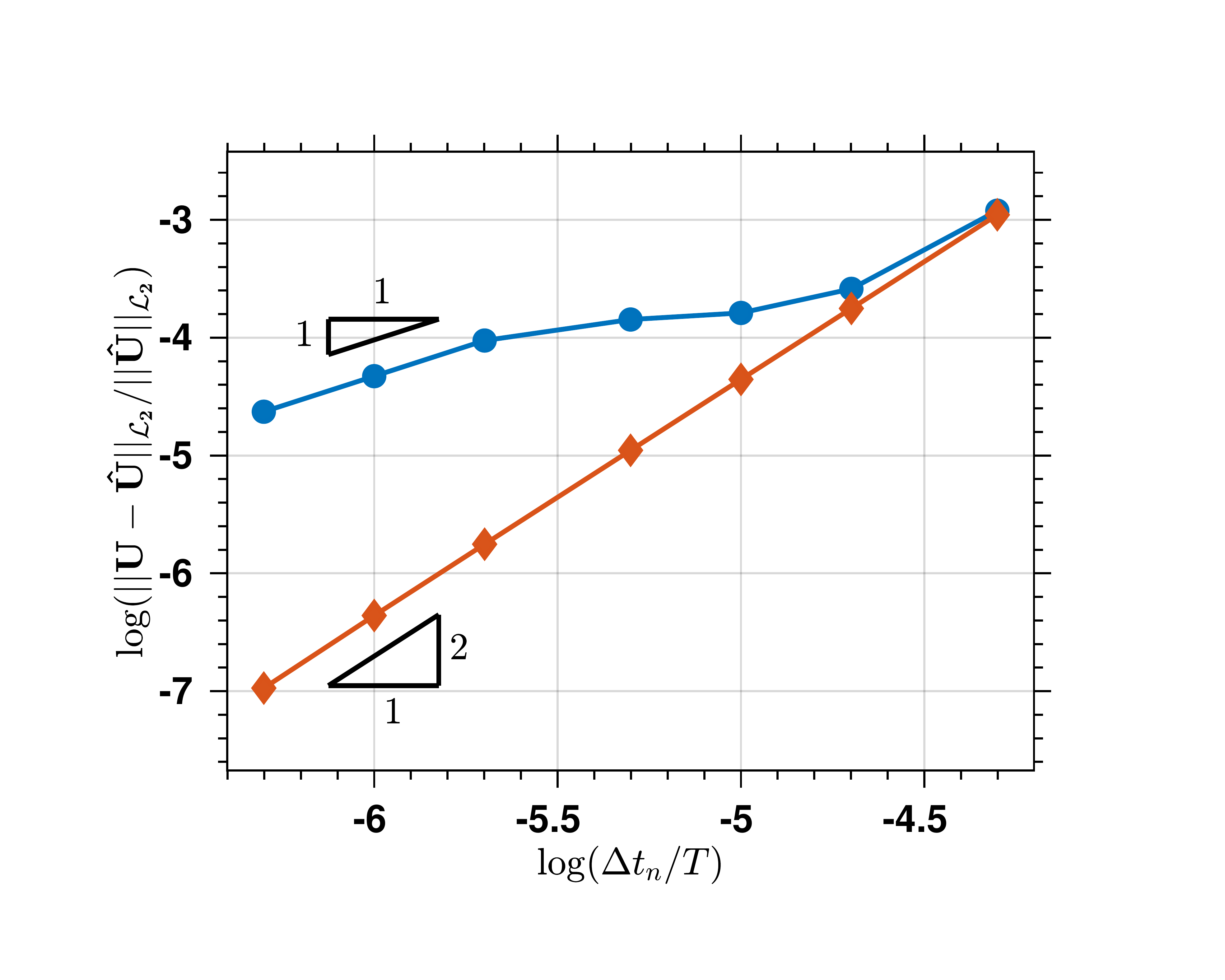} &
\includegraphics[angle=0, trim=80 50 100 80, clip=true, scale = 0.23]{./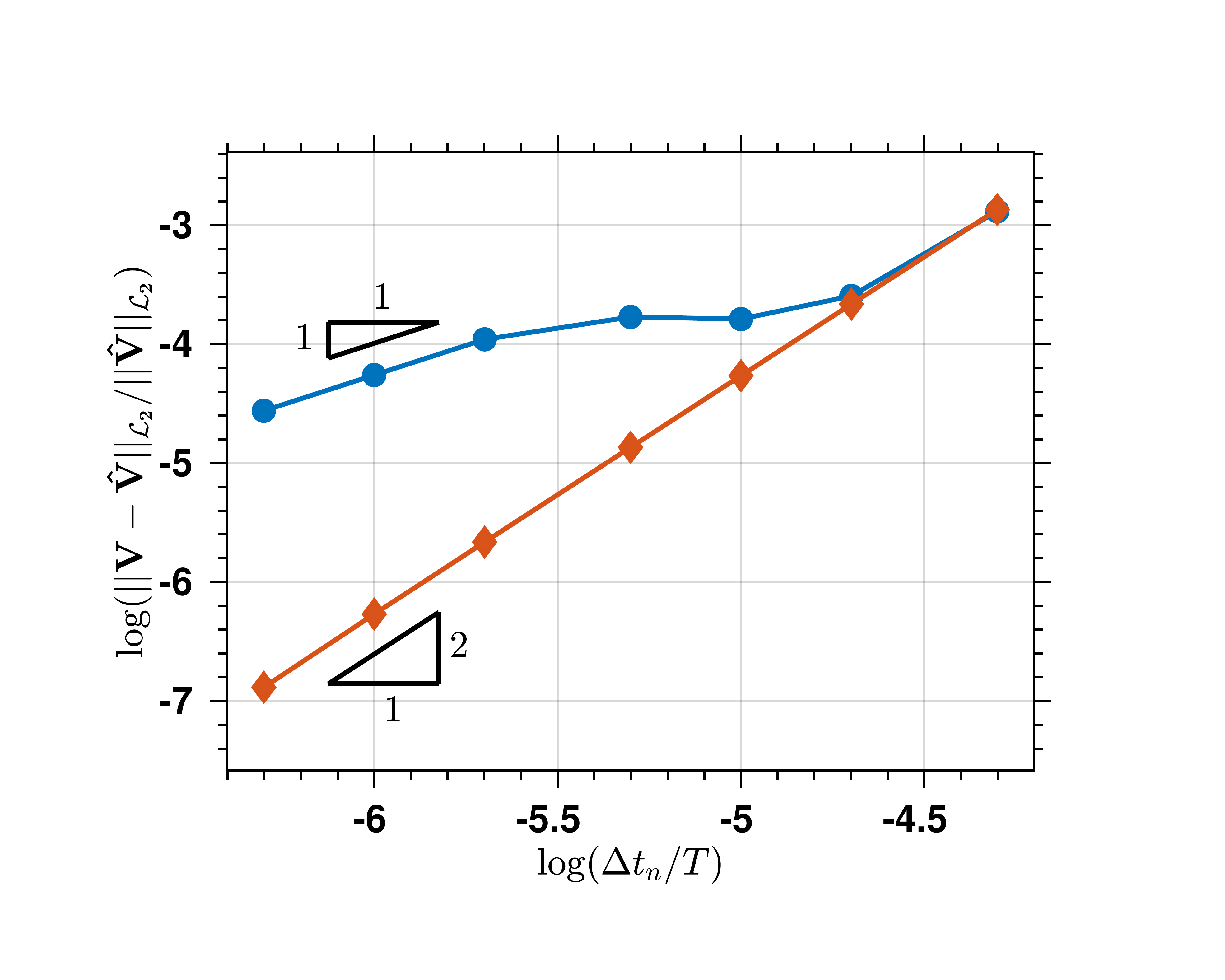}  \\
\multicolumn{2}{c}{\includegraphics[angle=0, trim=80 50 100 80, clip=true, scale = 0.23]{./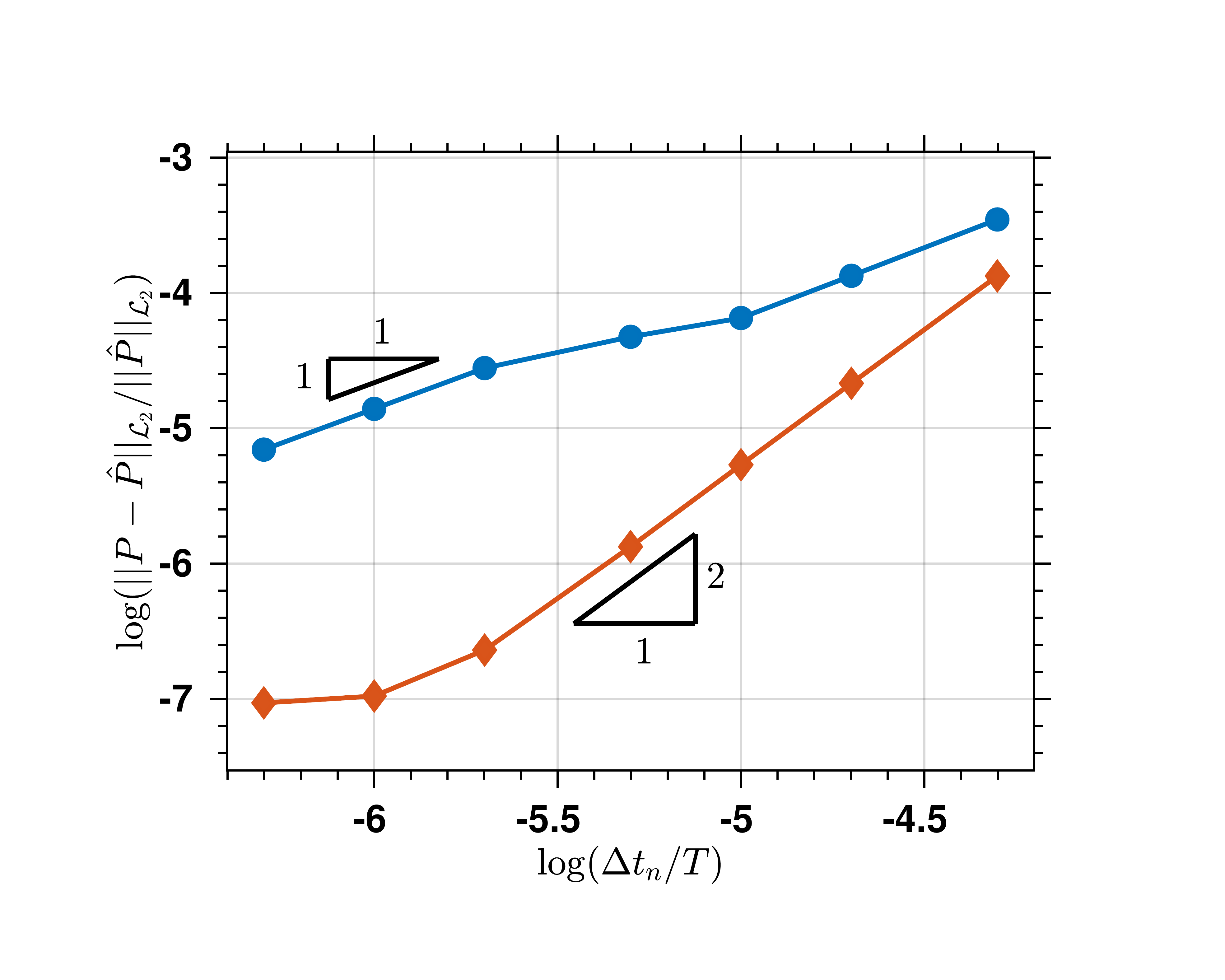}} \\
\multicolumn{2}{c}{ \includegraphics[angle=0, trim=0 170 0 680, clip=true, scale = 0.25]{./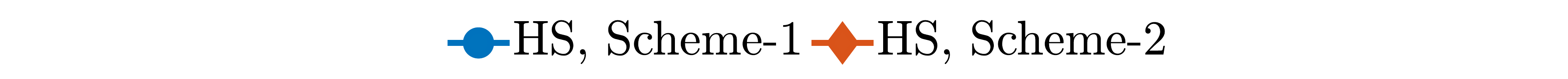} }
\end{tabular}
\caption{The convergence rates of the displacement $\bm{U}$,  velocity $\bm{V}$, pressure ${P}$, and internal state variables $\bm{\Gamma}^{1}$ and $\bm{Q}^{1}$ at time $t=0.1$. The hatted variables $\hat{(\bullet)}$ represent the overkill solutions obtained with the time step size $\Delta t_n = 1.0\times10^{-5}$.}
\label{fig:Ldomain_convergence}
\end{center}
\end{figure}

We first verify the convergence behavior of the displacement, velocity, pressure, and the internal state variables $\bm{Q}^{\alpha}$ and $\bm{\Gamma}^{\alpha}$ given in Section \ref{sec:time-integration}. An overkill solution at time $t=0.1$ is obtained by taking $\Delta t_n = 1.0\times10^{-5}$. The HS and MIPC models exhibit identical convergence behavior, and thus we choose to present the results of the HS model here. Figure \ref{fig:Ldomain_convergence} displays the convergence rates of the aforementioned variables. As can be observed, the internal state variables $\bm{\Gamma}^{1}$ and $\bm{Q}^{1}$ exhibit theoretical convergence rates, confirming the analysis made in Section \ref{sec:time-integration}. As for the displacement, velocity, and pressure, Scheme-1 enters into the asymptotical region when the time step size is smaller than $2\times10^{-4}$. An explanation is that the stress enhancement, which affects the temporal convergence rate, can be smaller than $\bm S_{\mathrm{iso} \: n+\frac12}$ in magnitude when the time step size is relatively large. This renders Scheme-1 exhibiting a quadratic convergence rate in the pre-asymptotical region. As for Scheme-2, the three variables converge quadratically, corroborating the numerical analysis made in Section \ref{sec:second-order-update-formula-ISV}.

\begin{figure}
\begin{center}
\begin{tabular}{cc}
\includegraphics[angle=0, trim=80 180 100 100, clip=true, scale = 0.17]{./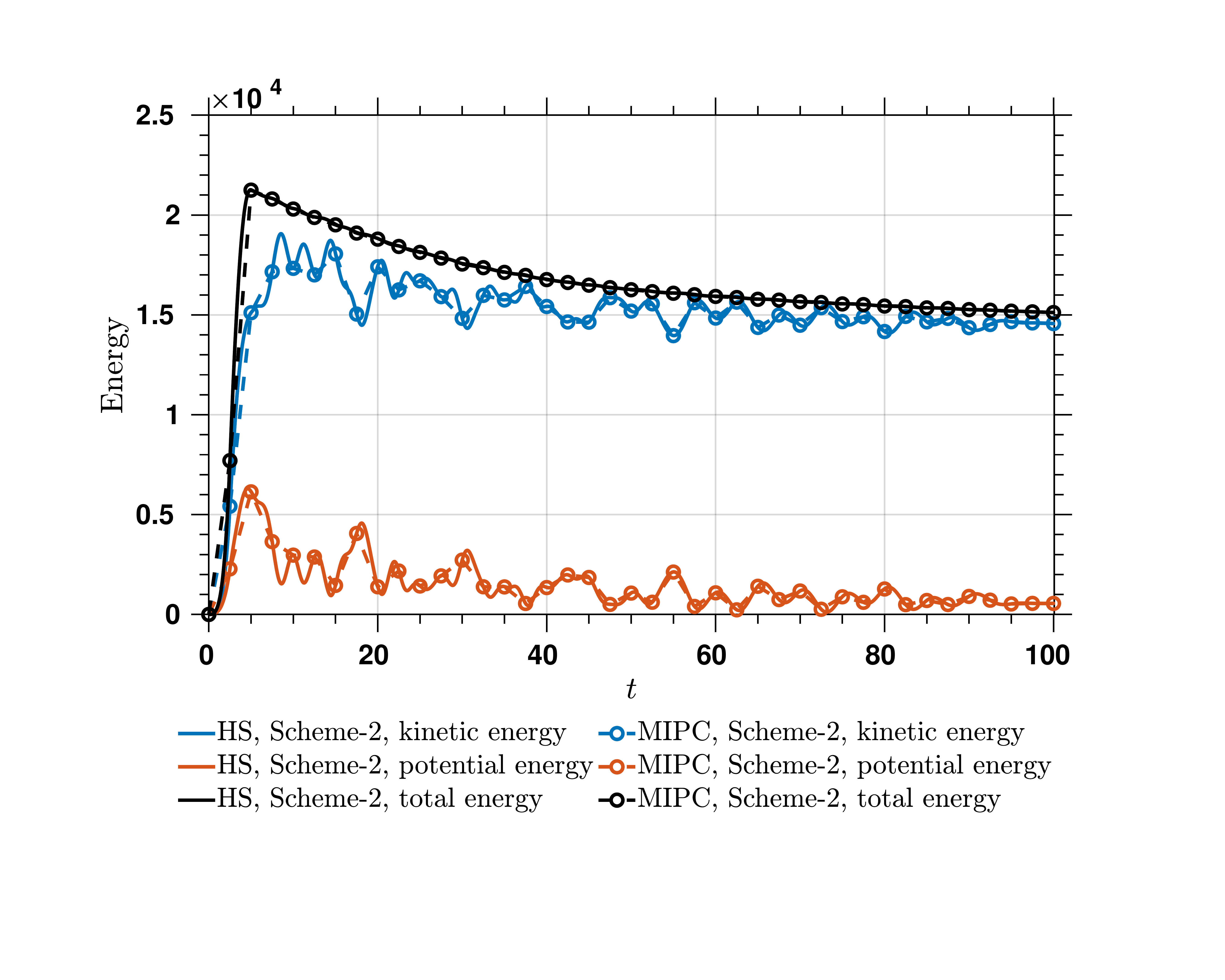} &
\includegraphics[angle=0, trim=80 180 100 100, clip=true, scale = 0.17]{./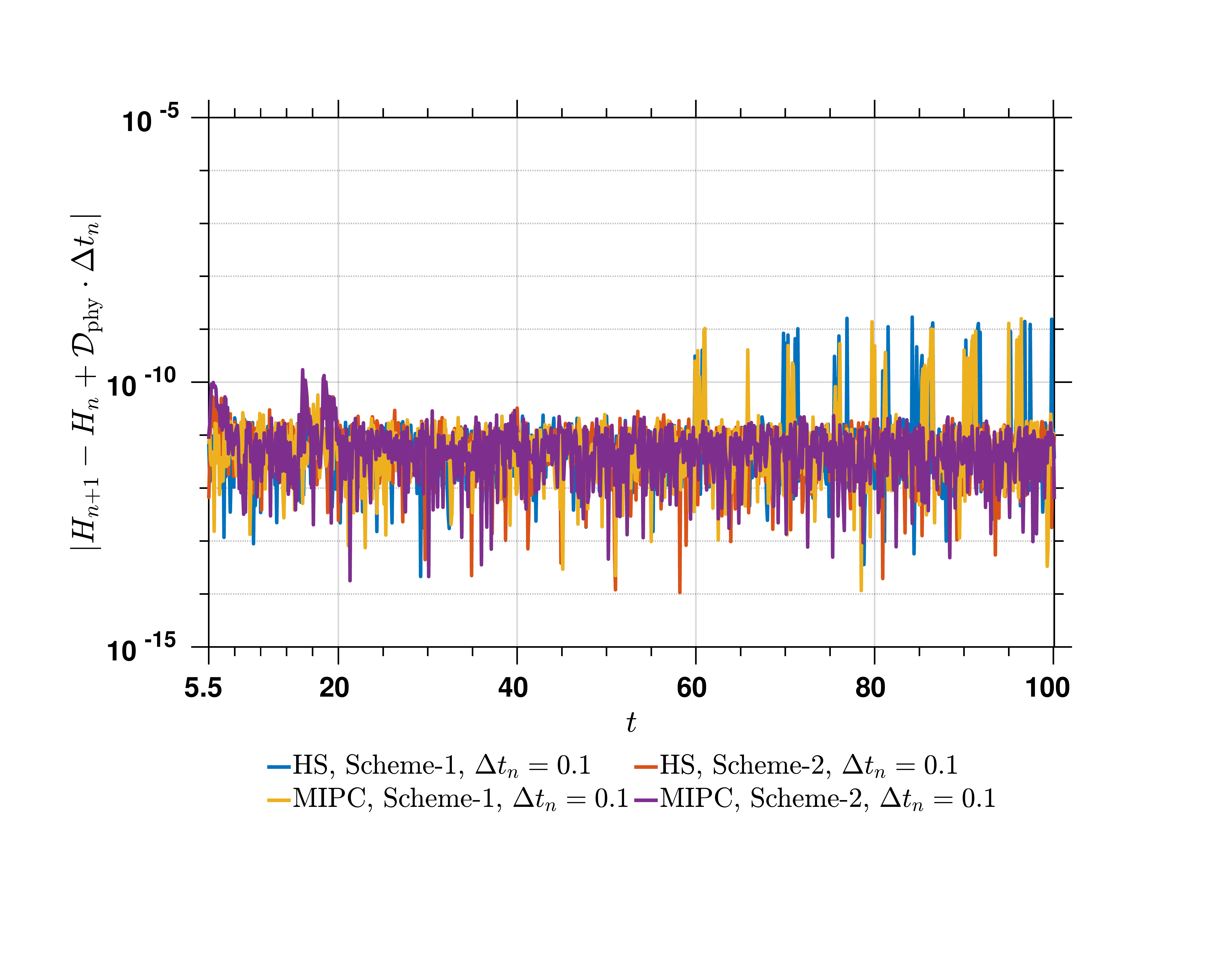} \\
(a) & (b)
\end{tabular}
\caption{ (a) The time histories of the kinetic, potential (i.e., $G^{\infty}_{\mathrm{iso}}(\tilde{\bm C}_n) + \Upsilon^1(\tilde{\bm C}_n, \bm \Gamma^{1}_n)$), and total energies; (b) the absolute errors of $H_{n+1} - H_{n} + \mathcal D_{\mathrm{phy}}\Delta t_n$ for the HS and MIPC models.}
\label{fig:Ldomain_energy_HS_MIPC}
\end{center}
\end{figure}

\begin{figure}
\begin{center}
\begin{tabular}{cc}
\includegraphics[angle=0, trim=80 180 100 100, clip=true, scale = 0.17]{./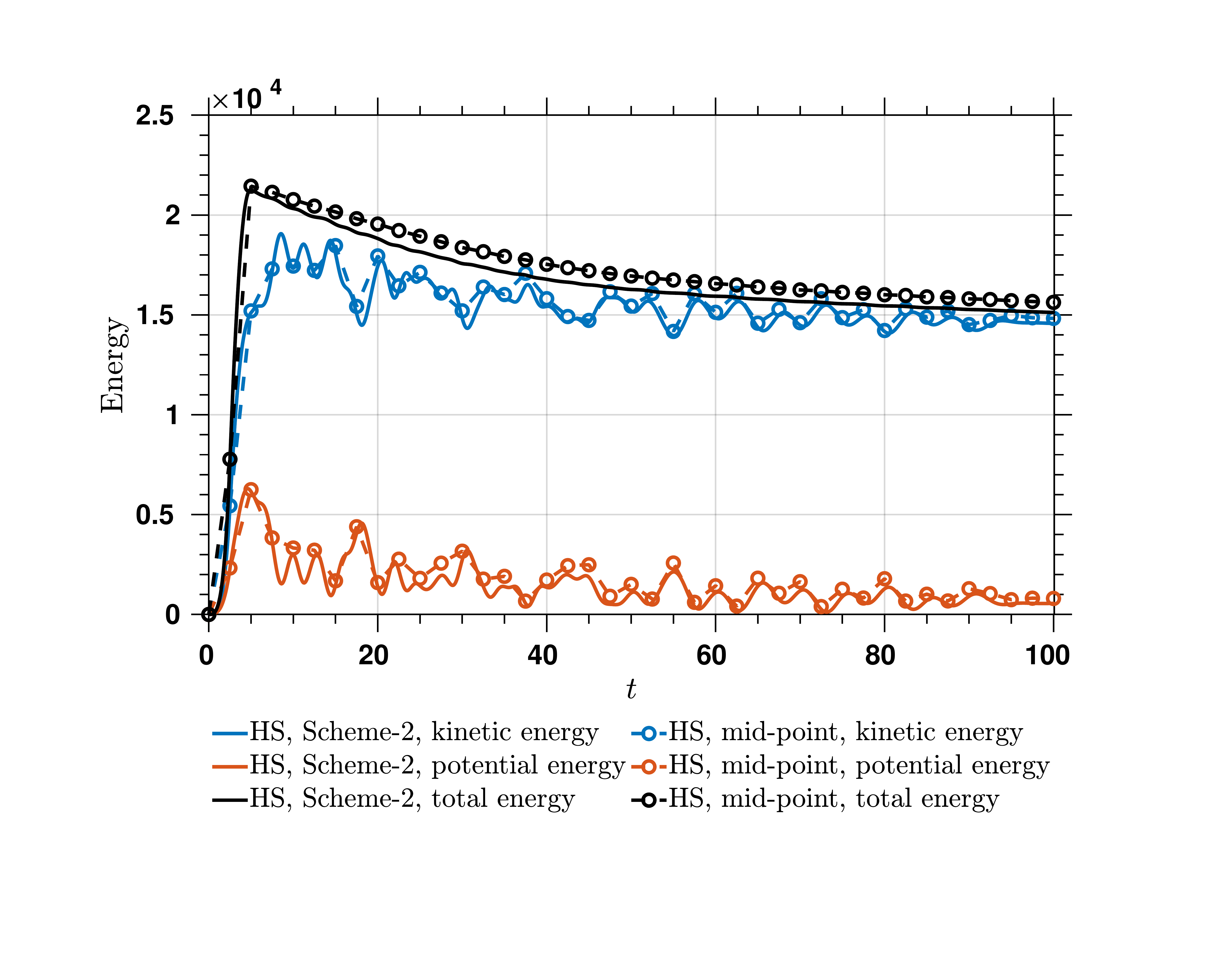} &
\includegraphics[angle=0, trim=80 180 100 100, clip=true, scale = 0.17]{./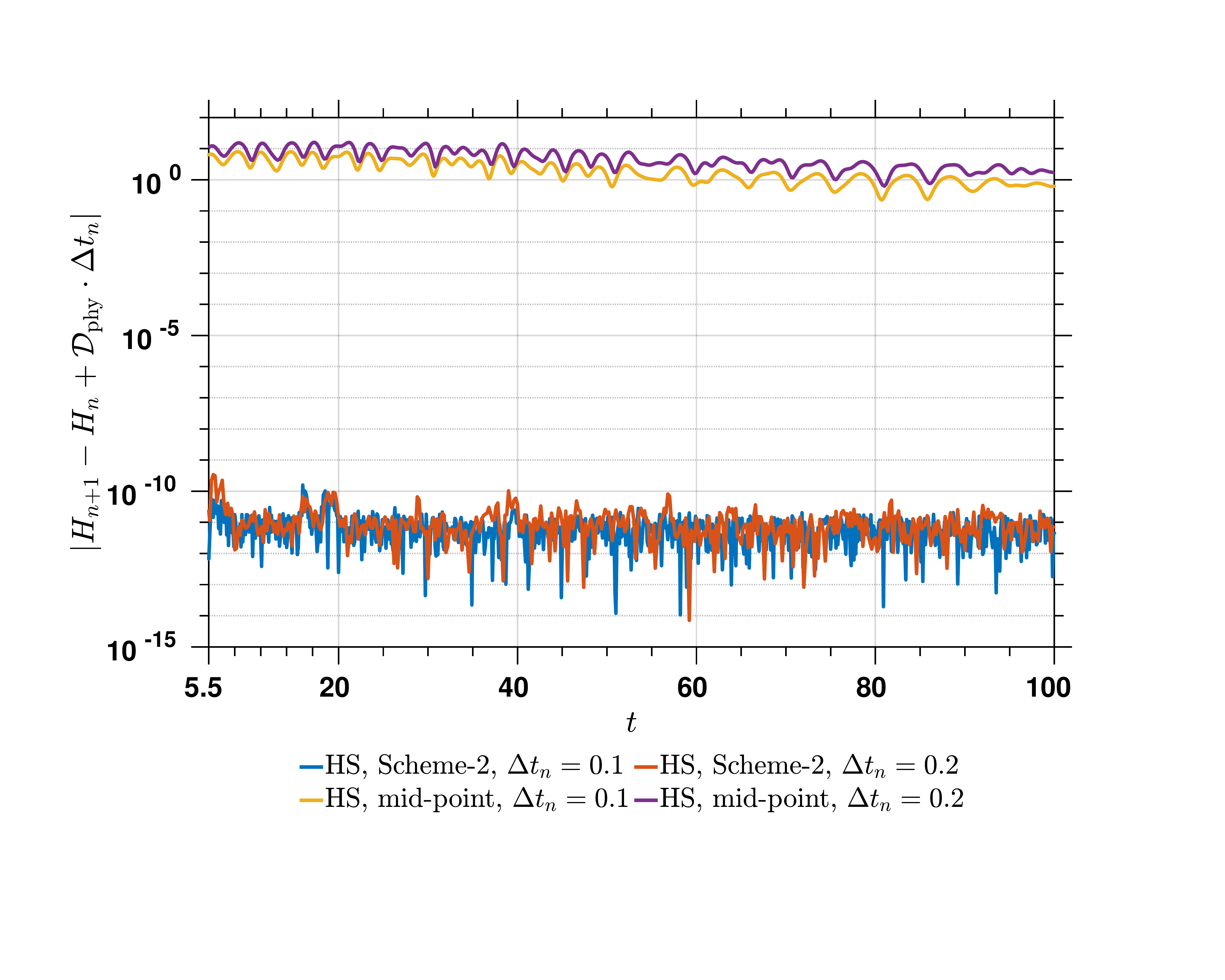} \\
(a) & (b) 
\end{tabular}
\caption{Comparison of the discrete energies calculated by Scheme-2 and the mid-point scheme using the HS model: (a) the time histories of the kinetic, potential (i.e., $G^{\infty}_{\mathrm{iso}}(\tilde{\bm C}_n) + \Upsilon^1(\tilde{\bm C}_n, \bm \Gamma^{1}_n)$, and total energies; (b) the absolute errors of $H_{n+1} - H_{n} + \mathcal D_{\mathrm{phy}}\Delta t_n$.}
\label{fig:Ldomain_energy_HS_MD}
\end{center}
\end{figure}

\begin{figure}
\begin{center}
\begin{tabular}{cc}
\includegraphics[angle=0, trim=80 180 100 100, clip=true, scale = 0.17]{./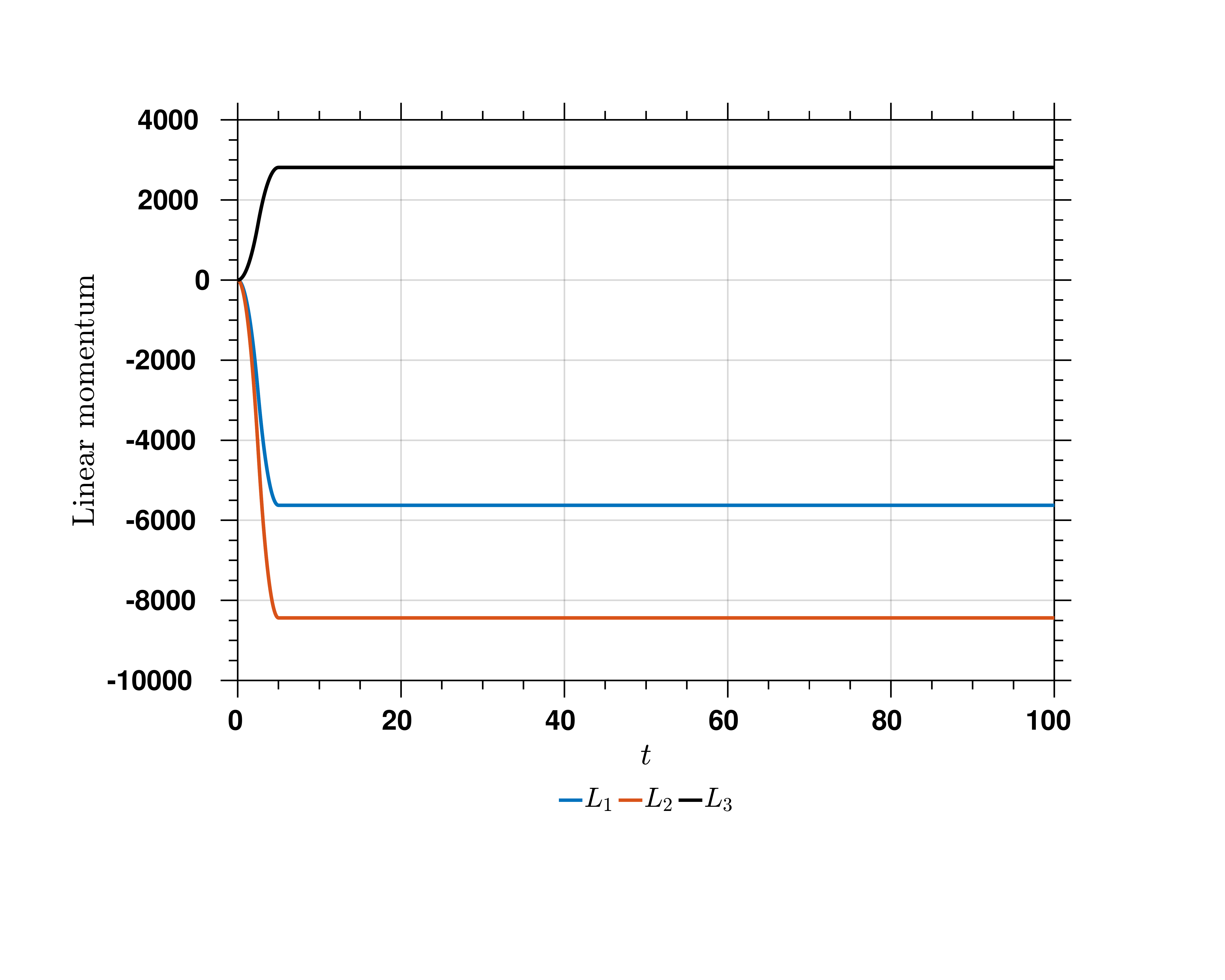} &
\includegraphics[angle=0, trim=80 180 100 100, clip=true, scale = 0.17]{./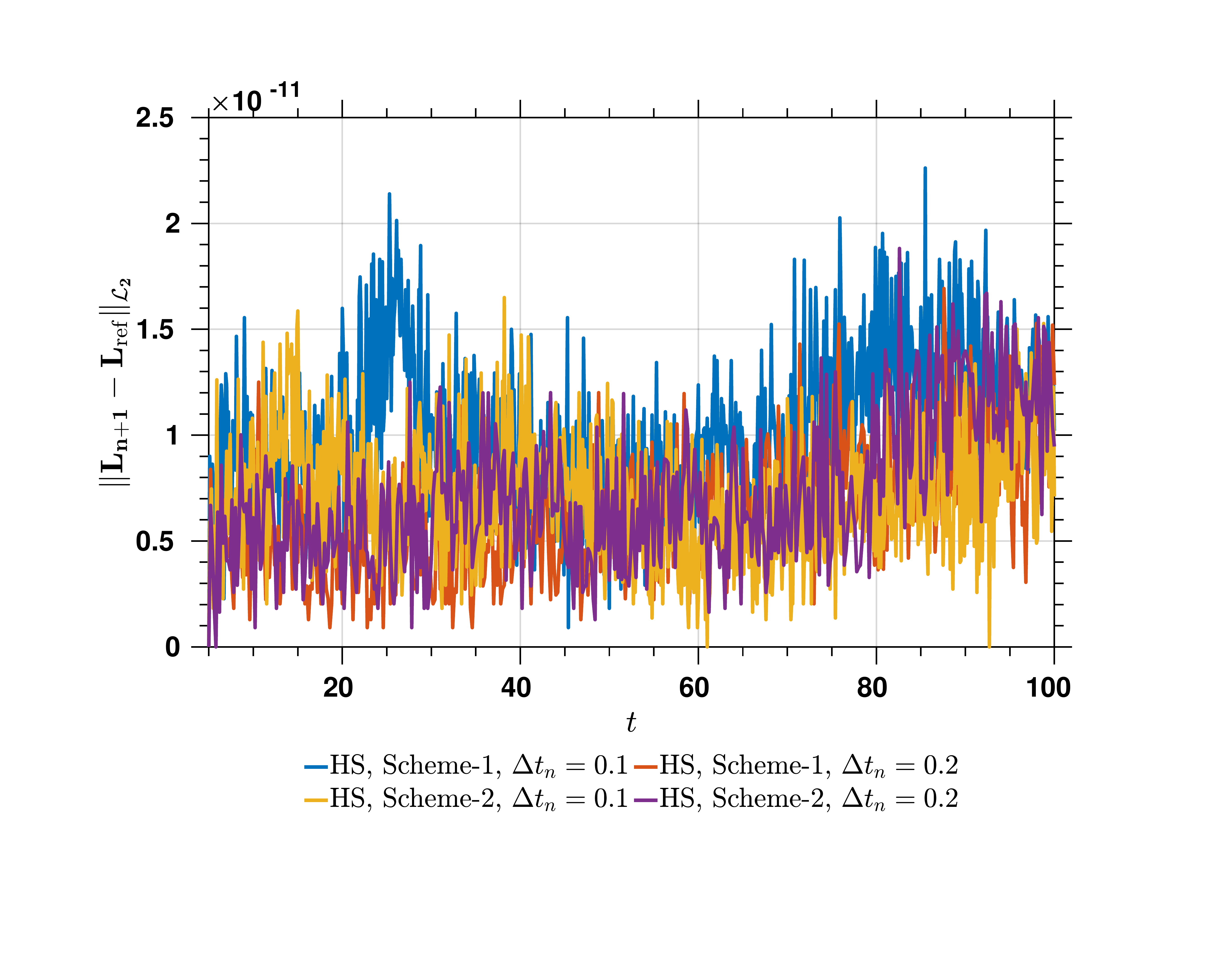}\\
(a) & (b) 
\end{tabular}
\caption{(a) The time histories of the total linear momentum; (b) the absolute errors of the total linear momentum calculated by Scheme-1 and Scheme-2 with $\Delta t_n = 0.1$ and $0.2$, respectively. The symbol $(\bullet)_{\mathrm{ref}}$ represents the solution at $t=5$.}
\label{fig:Ldomain_Lmom}
\end{center}
\end{figure}

\begin{figure}
\begin{center}
\begin{tabular}{cc}
\includegraphics[angle=0, trim=80 180 100 100, clip=true, scale = 0.17]{./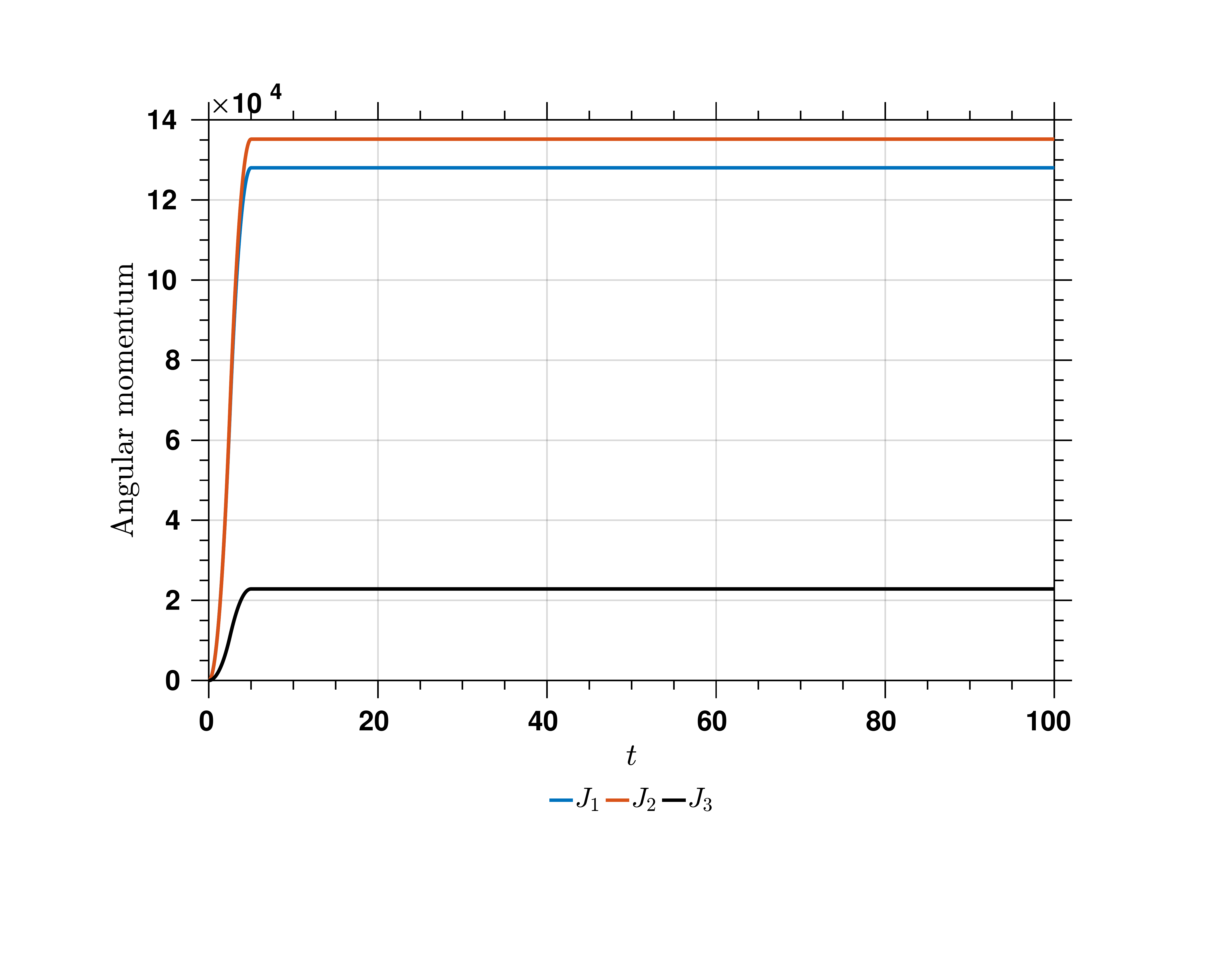} &
\includegraphics[angle=0, trim=80 180 100 100, clip=true, scale = 0.17]{./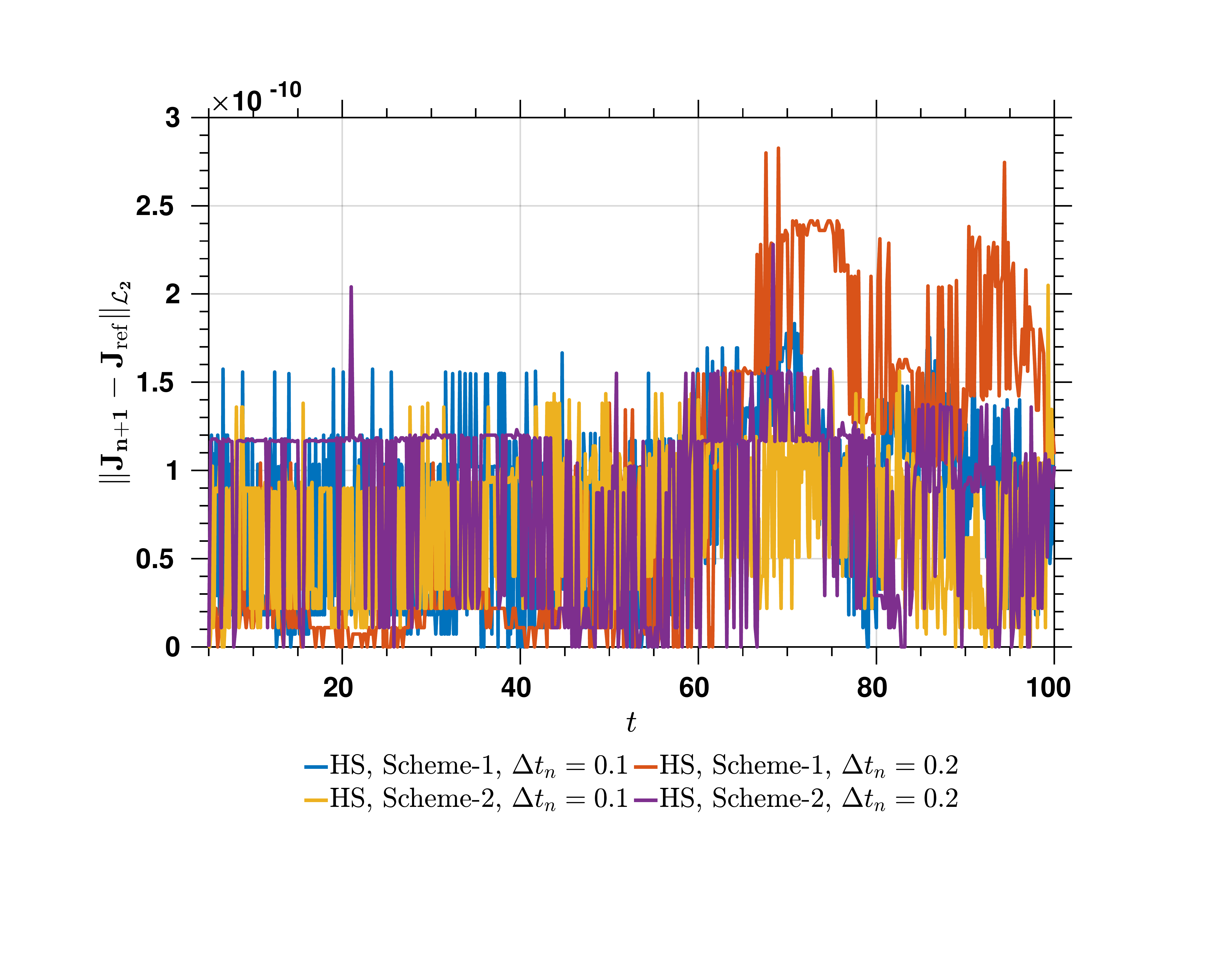}\\
(a) & (b) 
\end{tabular}
\caption{(a) The time histories of the total angular momentum; (b) the absolute errors of the total angular momentum calculated by Scheme-1 and Scheme-2 with $\Delta t_n = 0.1$ and $0.2$, respectively. The symbol $(\bullet)_{\mathrm{ref}}$ represents the solution at $t=5$.}
\label{fig:Ldomain_Jmom}
\end{center}
\end{figure}

\begin{figure}
\begin{center}
\begin{tabular}{cc}
\includegraphics[angle=0, trim=80 85 120 110, clip=true, scale = 0.22]{./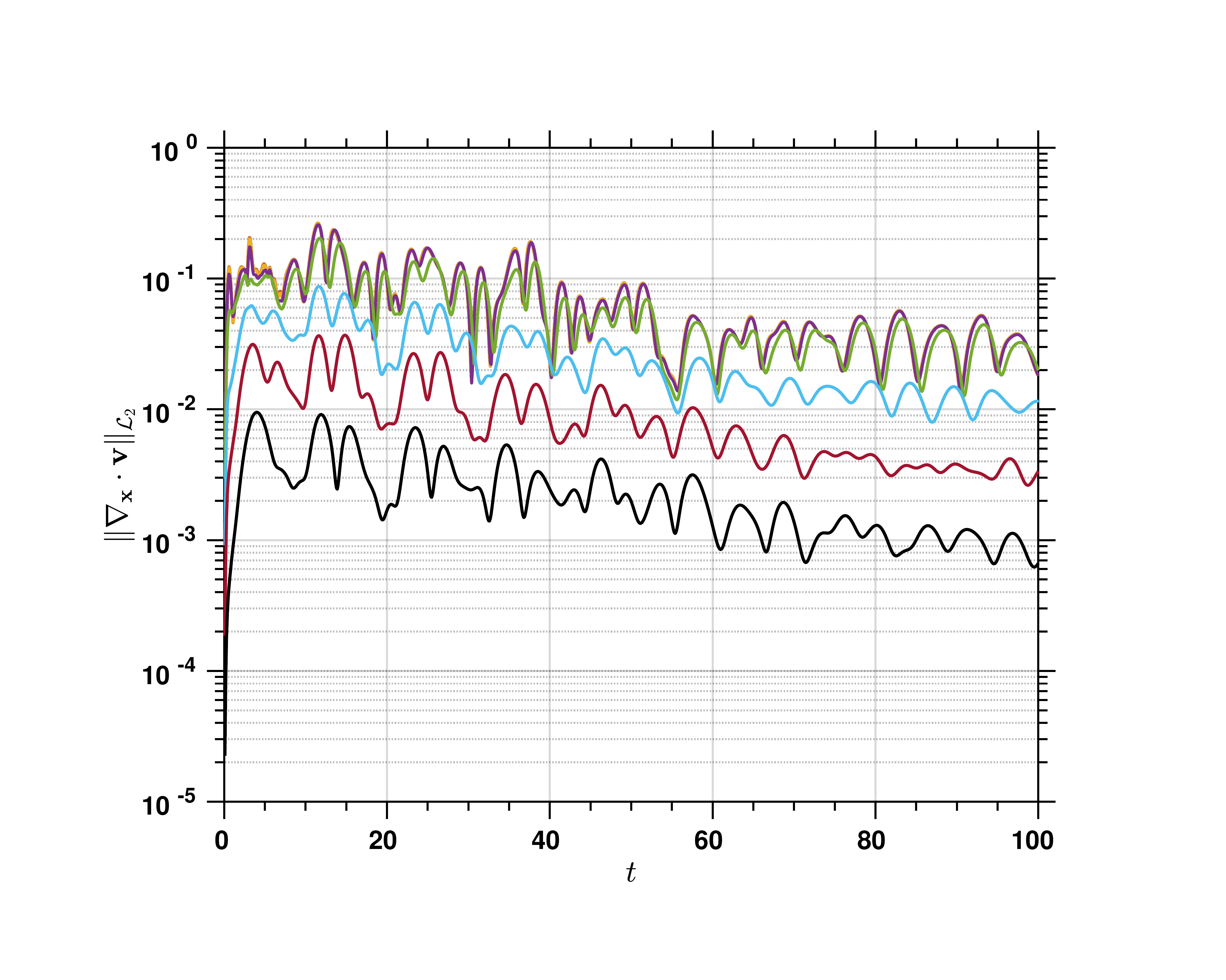} &
\includegraphics[angle=0, trim=80 85 120 110, clip=true, scale = 0.22]{./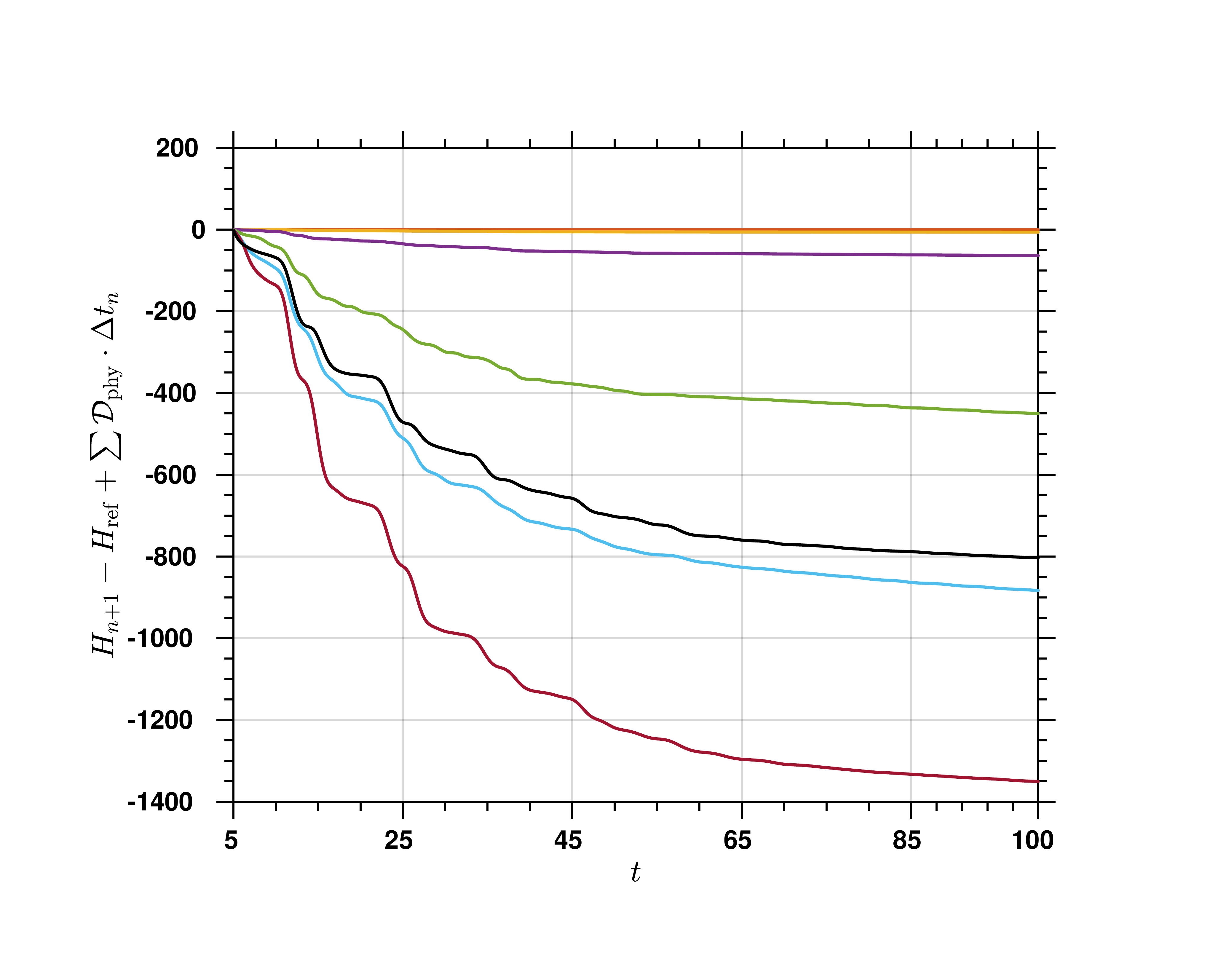} \\
(a) & (b) \\
\includegraphics[angle=0, trim=80 85 120 110, clip=true, scale = 0.22]{./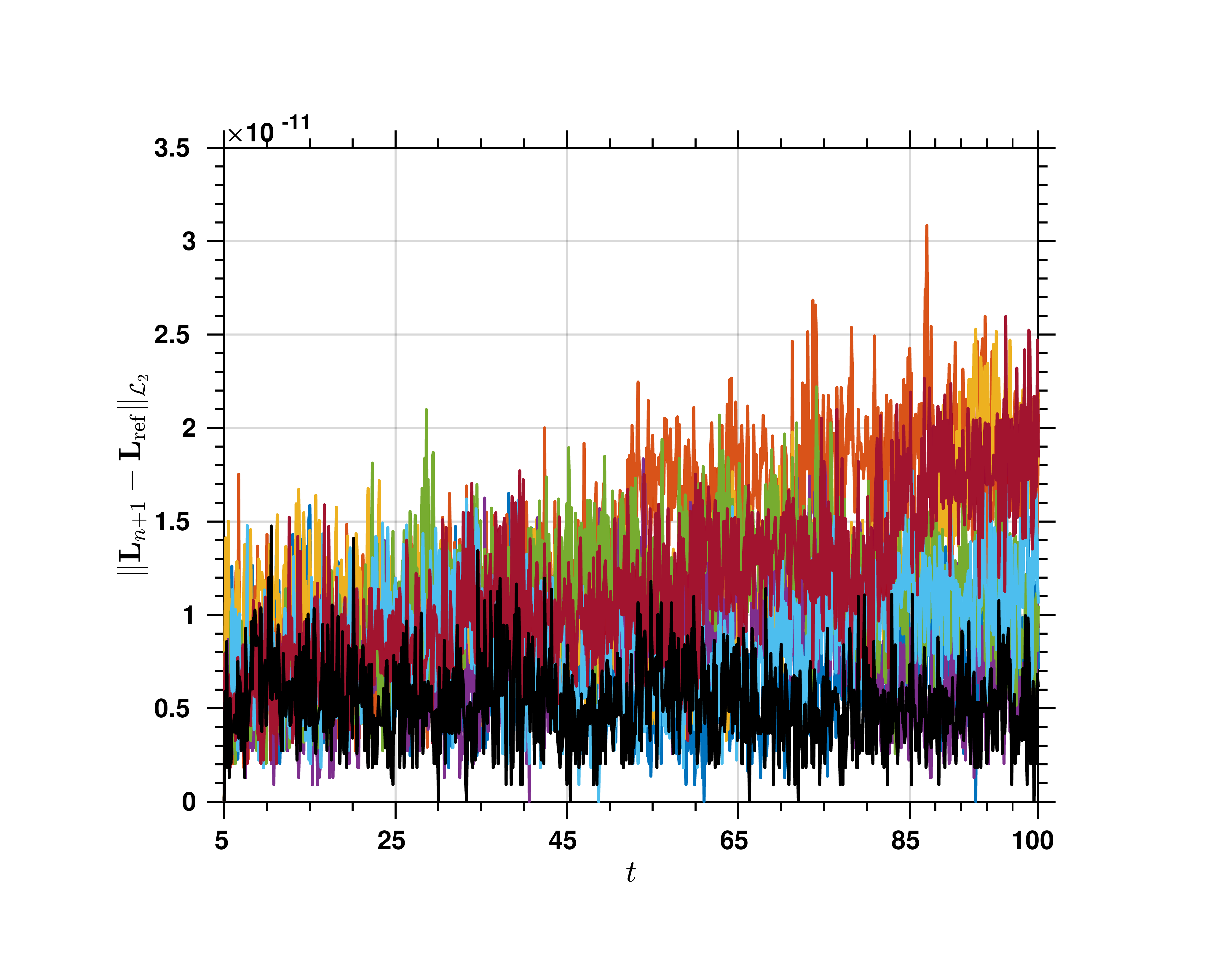} &
\includegraphics[angle=0, trim=80 85 120 110, clip=true, scale = 0.22]{./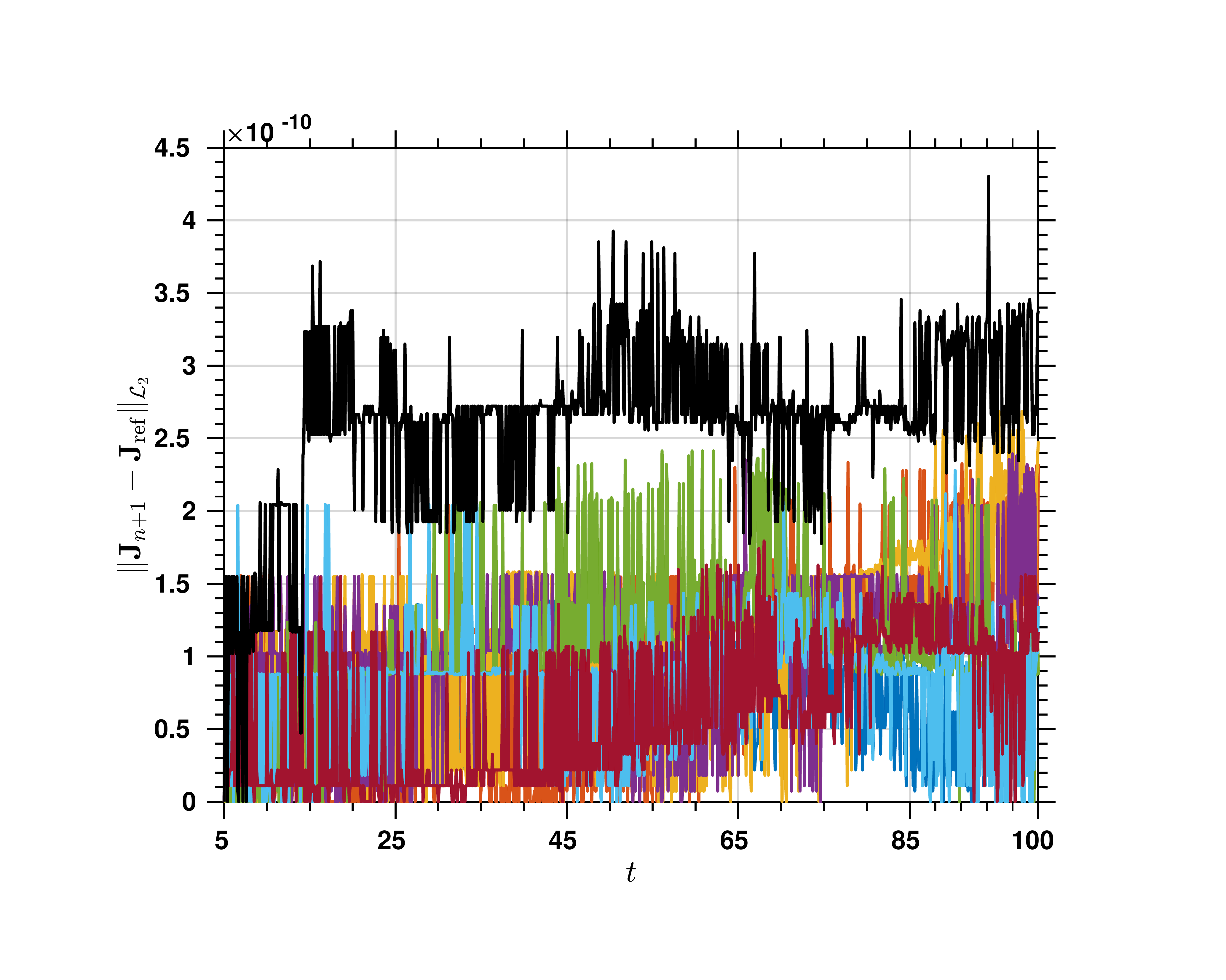} \\
(c) & (d) \\
\multicolumn{2}{c}{ \includegraphics[angle=0, trim=350 160 310 720, clip=true, scale = 0.35]{./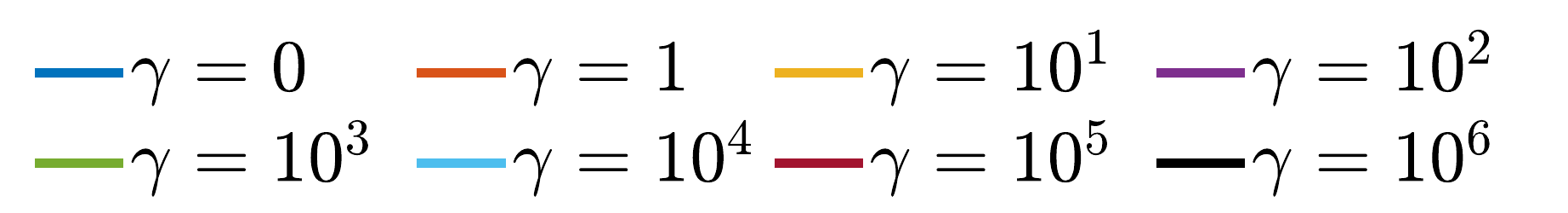} }
\end{tabular}
\caption{The effect of the grad-div stabilization with the HS model and Scheme-2: (a) the time histories of $\| \nabla_{\bm x} \cdot \bm{v} \|_{\mathcal{L}_2}$; (b) the dissipation of the total energy; (c) the absolute errors of the total linear momentum; (d) the absolute errors of the total angular momentum over time. The symbol $(\bullet)_{\mathrm{ref}}$ represents the solution at $t=5$.}
\label{fig:Ldomain_graddiv}
\end{center}
\end{figure}

Next, we examine the discrete energy stability and momentum conservation properties. Figure \ref{fig:Ldomain_energy_HS_MIPC} (a) illustrates the time histories of the kinetic, potential, and total energies for the HS and MIPC models with Scheme-2. It shows that after the initial loading phase, the total energy decreases with time due to the viscous dissipation. The energy histories of the two material models are almost identical in this case. In Figure \ref{fig:Ldomain_energy_HS_MIPC} (b), we examine the absolute errors of $H_{n+1} - H_{n} + \mathcal D_{\mathrm{phy}} \Delta t_n$ over time. It is demonstrated that the dissipation of both schemes matches the dissipation of the continuum system in the sense of \eqref{eq:em-prop-1} with an error at the magnitude of $10^{-10}$, which has the same order of magnitude as the tolerances used in the nonlinear solver. This justifies the energy stability property of both schemes. 

The differences between Scheme-2 and the mid-point scheme are illustrated in Figure \ref{fig:Ldomain_energy_HS_MD}. In Figure \ref{fig:Ldomain_energy_HS_MD} (a), one may observe that the total energy calculated by the mid-point scheme is slightly larger than that of Scheme-2. Figure \ref{fig:Ldomain_energy_HS_MD} (b) depicts the absolute error of $H_{n+1} - H_{n} + \mathcal D_{\mathrm{phy}} \Delta t_n$ using the time step sizes $\Delta t_n = 0.1$ and $\Delta t_n = 0.2$. It can be clearly observed from the figure that the absolute errors given by Scheme-2 are at the magnitude of $10^{-10}$, regardless of the time step size. The errors of the mid-point scheme are significantly larger than those of Scheme-2. In addition, the conservation of the total linear and angular momenta are plotted in Figures \ref{fig:Ldomain_Lmom} and \ref{fig:Ldomain_Jmom}, respectively. It can be gleaned from the two figures that the errors of the momenta are at the magnitude of $10^{-10}$, justifying the momentum conservation property.

To investigate the effects of the grad-div stabilization, the HS model is numerically studied with Scheme-2, and the time step size is fixed to be $\Delta t_n = 0.1$. The parameter $\gamma$ varies from 0 to $10^{6}$. In Figure \ref{fig:Ldomain_graddiv} (a), we plot the $\mathcal{L}_2$-norm of $\nabla_{\bm x} \cdot \bm{v}$ over time. It can be observed that $\| \nabla_{\bm x} \cdot \bm{v} \|_{\mathcal{L}_2}$ decreases with the increase of the parameter $\gamma$, signifying the efficacy of the grad-div stabilization in enhancing the discrete mass conservation. In Figure \ref{fig:Ldomain_graddiv} (b), the time histories of $H_{n+1}-H_{\mathrm{ref}}+\sum\mathcal{D}_{\mathrm{phy}}\cdot \Delta t_n$ are plotted, from which one may evaluate the dissipation induced by the grad-div stabilization. When the value of $\gamma$ increases from 0 to $10^{5}$, the amount of the numerical dissipation gets stronger. Yet, when the value of $\gamma$ increases from $10^{5}$ to $10^{6}$, it is tempting to observe that there is a reduction of the dissipation. Indeed, there is a competing mechanism in the definition of the numerical dissipation $\mathcal{D}_{\mathrm{num}}$. With the increase of $\gamma$, $(\nabla_{\bm x} \cdot \bm v)^2$ approaches zero at a faster rate \cite{Case2011}.  Therefore, we expect that the numerical dissipation $\mathcal D_{\mathrm{num}}$ will asymptotically vanish as the value of $\gamma$ approaches infinity. The effects of the grad-div stabilization on the momentum conservation are illustrated in Figures \ref{fig:Ldomain_graddiv} (c) and (d). The results justify that the conservation of the momenta is not disturbed by the grad-div stabilization.

\begin{table}[htbp]
  \centering 
  \begin{tabular}{ m{.35\textwidth} m{.4\textwidth} }
    \hline
    \begin{minipage}{.35\textwidth}
    \centering
      \includegraphics[width=0.95\linewidth, trim=160 280 170 260, clip]{./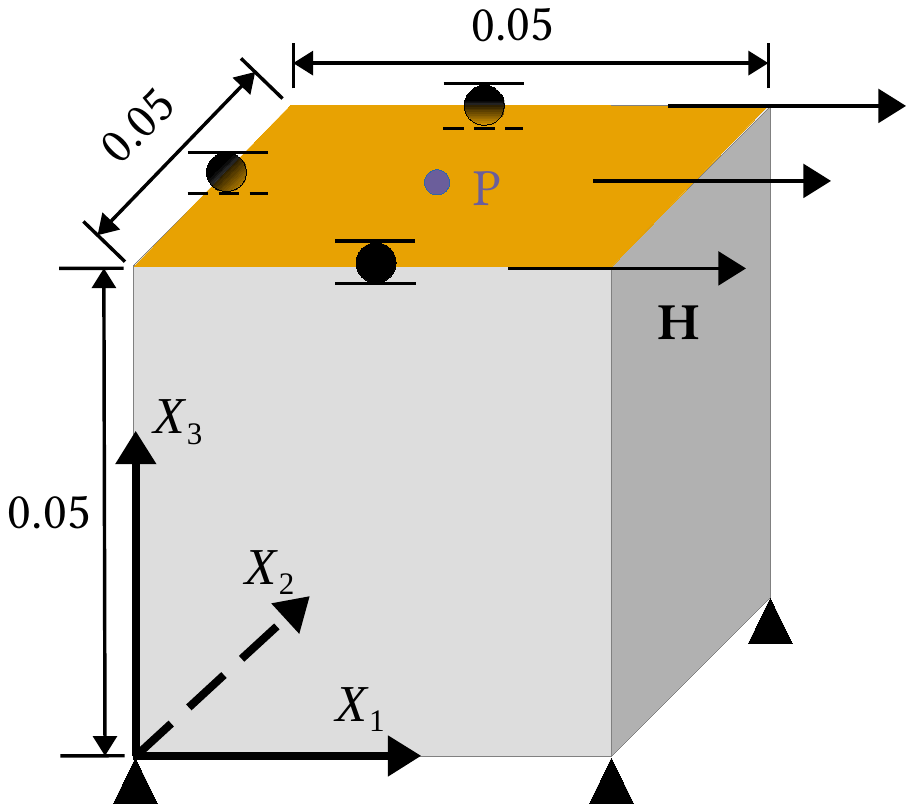}
    \end{minipage}
    &
    \begin{minipage}{.4\textwidth}
      \begin{itemize}
        \item[] Material properties:
        \item[] $G_{\mathrm{iso}}^{\infty} = \frac{c_1}{2}(\tilde{I}_1 -3) + \frac{c_2}{2}(\tilde{I}_2 -3)$
        \item[] $\rho_0 = 1.0\times 10^3$
        \item[] $E = 625.72\times10^{3}$
        \item[] $c_1=c_2=E/6$
        \item[] $m=1$
        \item[] $\mu^1=536.224\times10^{3}$, $\eta^1=0.5 \times\mu^1$       
      \end{itemize}
    \end{minipage}   
    \\
    \hline
  \end{tabular}
  \caption{The problem definition of the shear test with sinusoidal loading: on the bottom surface, $\bm G = \bm 0$; on the top surface, $G_{2} = G_{3} = 0$, and the surface traction $\bm H$ is applied. The coordinate of the point P is $(0.025, 0.025, 0.05)$, at which we monitor the displacement and traction force over time.}
\label{table:cubic_block}
\end{table}

\begin{figure}
\begin{center}
\begin{tabular}{cc}
\includegraphics[angle=0, trim=80 50 80 100, clip=true, scale = 0.22]{./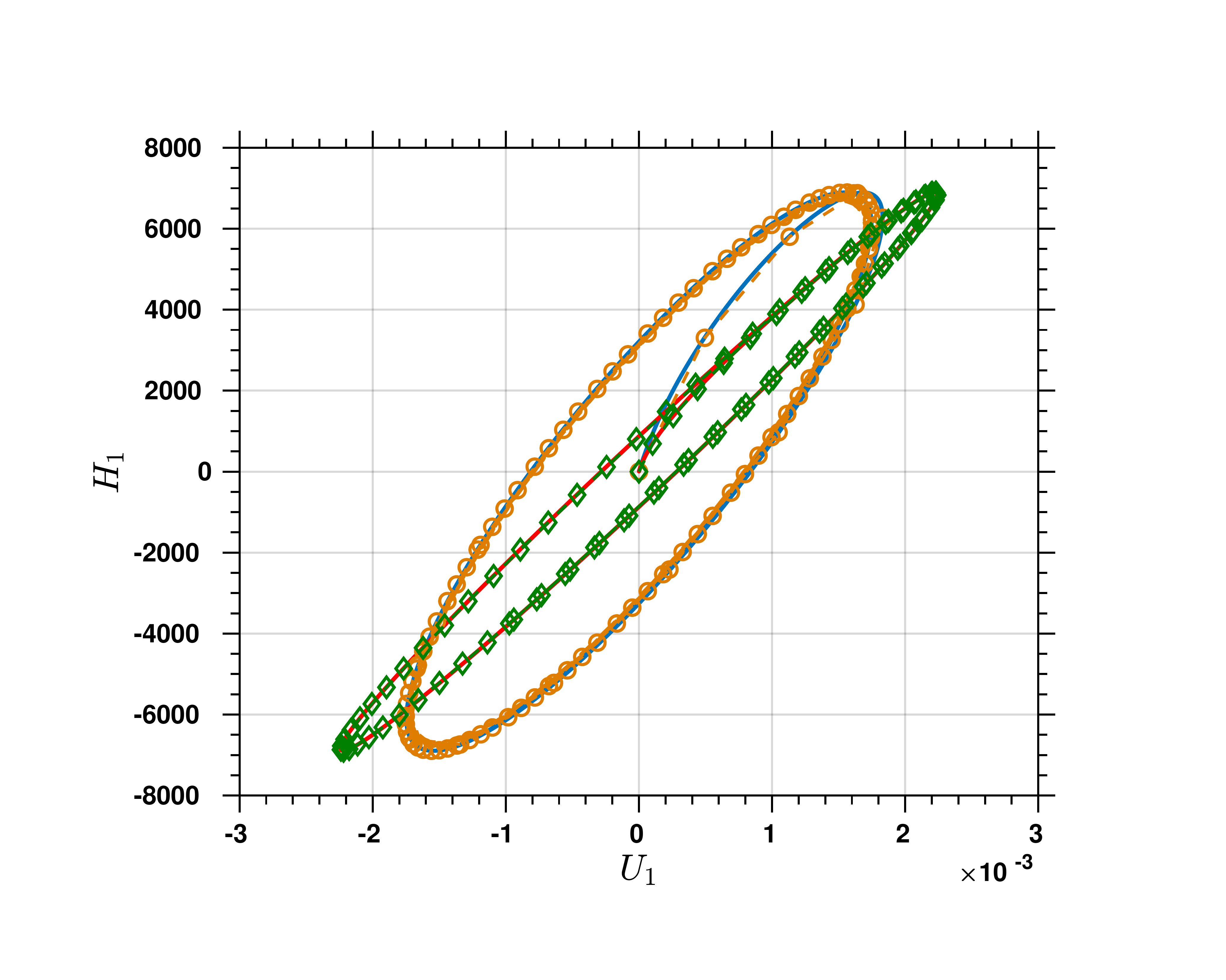} &
\includegraphics[angle=0, trim=80 50 80 100, clip=true, scale = 0.22]{./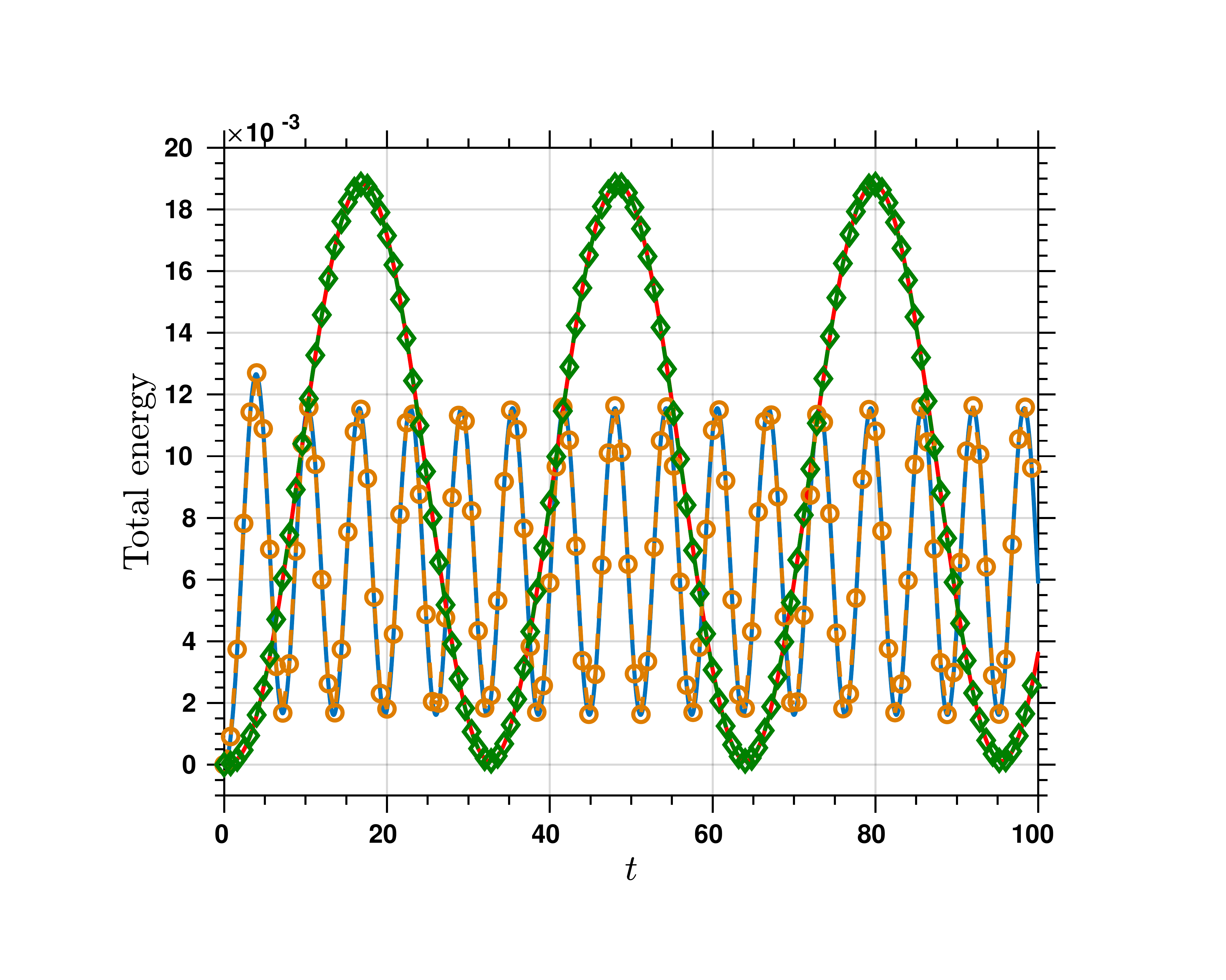} \\
(a) & (b) \\
\multicolumn{2}{c}{\includegraphics[angle=0, trim=0 50 50 120, clip=true, scale = 0.25]{./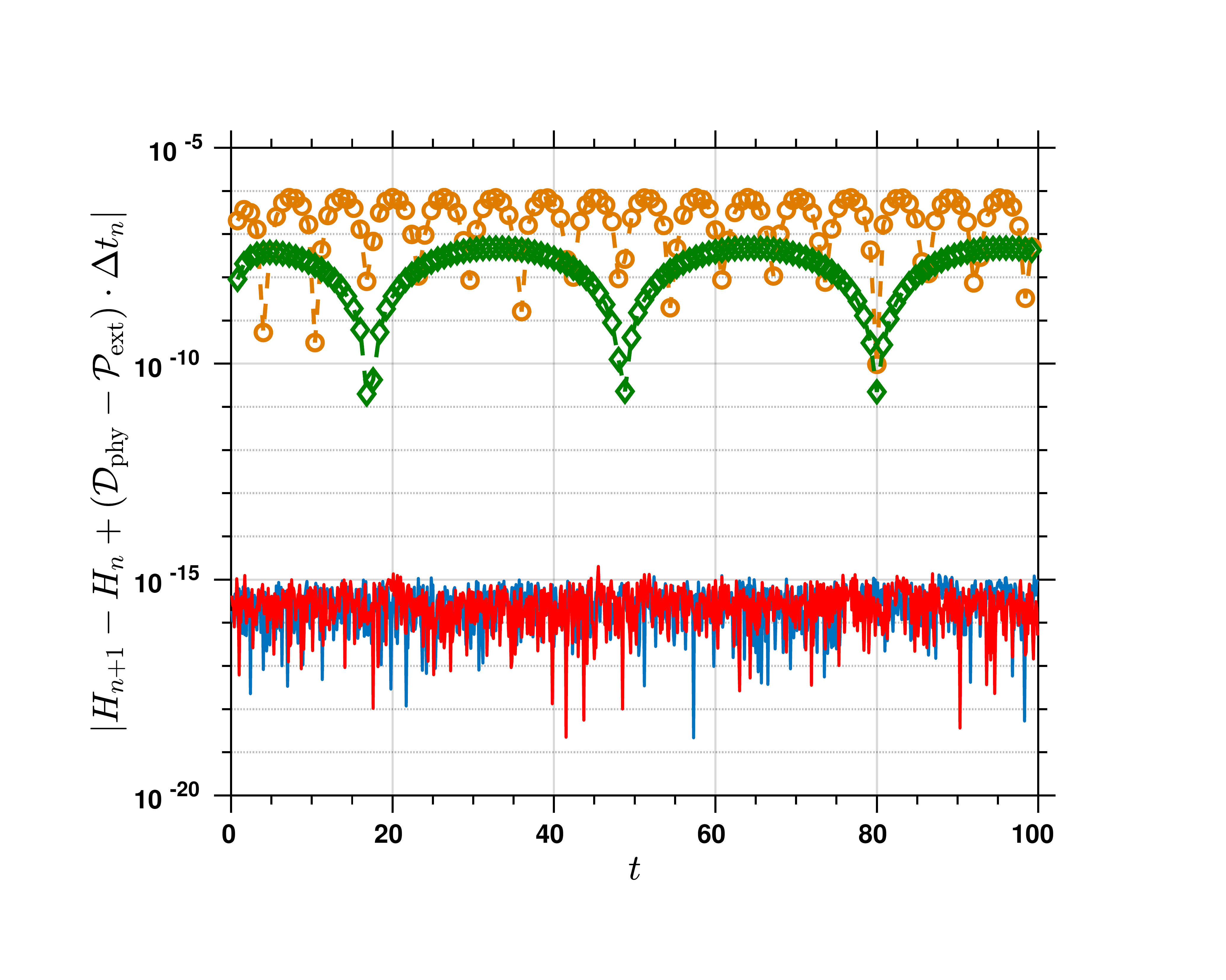}} \\
\multicolumn{2}{c}{ (c) } \\
\multicolumn{2}{c}{ \includegraphics[angle=0, trim=0 200 0 650, clip=true, scale = 0.23]{./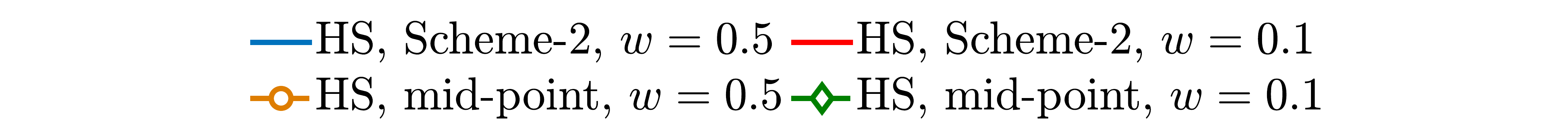} }
\end{tabular}
\caption{(a) The hysteresis loops of point P; (b) the time histories of the total energy; (c) the absolute errors of $H_{n+1} - H_{n} + \left( \mathcal D_{\mathrm{phy}} - \mathcal P_{\mathrm{ext}} \right)\Delta t_n$ given by Scheme-2 and the mid-point scheme.}
\label{fig:results_EM2_GA}
\end{center}
\end{figure}

\subsection{Shear test with sinusoidal loading}
In this example, we consider a three-dimensional cubic block with a sinusoidal surface load applied on its top. We use the Mooney-Rivlin model and the HS model to describe the equilibrium and viscous non-equilibrium parts of the material behavior, respectively. Parameters for the problem definition are summarized in Table \ref{table:cubic_block}. The bottom surface $X_{3}=0$ is fully clamped. On the top surface, roller boundary conditions are applied such that the displacement in the $X_2$ and $X_3$ directions are disallowed; a sinusoidal traction is applied on the top surface, taking the form $\bm{H} = [6.895\times10^{3} \mathrm{sin}(wt), 0, 0]^T$. On the rest four surfaces, traction-free boundary conditions are applied. In this study, the rate effects are investigated with two frequencies, that is $w=0.1$ and $0.5$, respectively. The problem is integrated in time up to $T=100$, with a uniform time step size $\Delta t_{n} =0.01$. The spatial mesh consists of eight elements with $\mathsf p=1$. 

Figure \ref{fig:results_EM2_GA} (a) illustrates the applied traction against the displacement of the centroid of the top surface (i.e., point P in Table \ref{table:cubic_block}). The curves are known as the hysteresis loops, and the loops calculated by Scheme-2 and the mid-point scheme are almost indistinguishable. In Figure \ref{fig:results_EM2_GA} (b), the total energy curves exhibit a sinusoidal form for both frequencies. In the low-frequency case, the amplitude of the energy oscillation is larger. The difference between the results of Scheme-2 and the mid-point scheme is negligible from the energy curves in Figure \ref{fig:results_EM2_GA} (b). Since the boundary data $\bm{G}$ is time-independent, the evolution of the total energy follows the estimate
\begin{align*}
\frac{1}{\Delta t_n} \left( H_{n+1} - H_{n} \right) = \mathcal P_{\mathrm{ext}} - \mathcal D_{\mathrm{phy}},
\end{align*}
according to Lemma \ref{lemma:ts-energy-dissipation}. In Figure \ref{fig:results_EM2_GA} (c), the absolute errors of $H_{n+1} - H_{n} + \left(\mathcal D_{\mathrm{phy}} -  \mathcal P_{\mathrm{ext}}  \right)\Delta t_n$ are monitored over time. The errors of Scheme-2 are of the order $10^{-15}$ and are independent of the frequency values. In other words, the energy in the solutions strictly matches the physical dissipation and the input of the external loading. The errors calculated by the mid-point scheme, however, are several orders of magnitude larger than those of Scheme-2, signifying an alerting physical inconsistency issue in the results of the mid-point scheme.

\section{Conclusion}
\label{sec:conclusion}
In this work, we continued our investigation of the finite deformation linear viscoelastic models. To construct energy-momentum consistent schemes, the notion of \textit{directionality property} is extended in two aspects. First, the dissipation effect is taken into account for the design of the algorithmic second Piola-Kirchhoff stress. This leads to an integration scheme for the balance equations. Second, the directionality property is demanded for the algorithmic approximation of the stress-like conjugate variables $\bm Q^{\alpha}$, which results in an integration rule for the internal state variables (i.e., the inelastic constitutive equations). Different from the widely adopted approach \cite{Simo1987,Simo2006,Holzapfel2000} that relies on a recurrence formula for the stress-like tensor $\bm Q^{\alpha}$, the integration of the constitutive equations here is in the strain-driven format. The combined approach leads to a suite of numerical strategies that provides an energy consistent, momentum conservative, and second-order accurate scheme for finite deformation linear viscoelastic models. Additionally, the grad-div stabilization technique is explored, and it is demonstrated to be rather effective in enhancing the discrete mass conservation property for the mixed elements considered here. In our future study, a completely nonlinear theory will be pursued.

\section*{Acknowledgments}
This work is supported by the National Natural Science Foundation of China [Grant Numbers 12172160], Shenzhen Science and Technology Program [Grant Number JCYJ20220818100600002], Southern University of Science and Technology [Grant Number Y01326127], and the Department of Science and Technology of Guangdong Province [Grant Numbers  2020B1212030001, 2021QN020642]. Computational resources are provided by the Center for Computational Science and Engineering at the Southern University of Science and Technology.

\appendix
\section{ The elasticity tensor for the algorithmic stress $\bm S_{\mathrm{iso} \: \mathrm{alg1}}$ }
\label{sec:appendix-A}
We provide an explicit expression for the algorithmic elasticity tensor associated with the algorithmic stress given in Section \ref{sec:first-order-update-formula-ISV}. We recall that the formula of the algorithmic stress \eqref{eq:constitutive-integration-Salg-1}-\eqref{eq:constitutive-integration-Salg-1-detailed} can be expressed as
\begin{align}
\label{eq:A0}
\bm S_{\mathrm{iso} \: \mathrm{alg1}} = \bm S_{\mathrm{iso} \: n+\frac12} + \bm S_{\mathrm{iso} \:\mathrm{enh1}}, \quad
\bm S_{\mathrm{iso} \: \mathrm{enh1}} := \frac{ \Delta G_{\mathrm{iso}}^{n+1} - \bm S_{\mathrm{iso} \: n+\frac12} : \bm Z_n }{ |\bm Z_n|} \frac{\bm Z_n}{ |\bm Z_n| },
\end{align}
with the notation
\begin{align}
\label{eq:A1}
&\Delta G_{\mathrm{iso}}^{n+1} := G_{\mathrm{iso}} \left( \tilde{\bm C}_{n+1}, \bm{\Gamma}_{n+1}^{1}, \cdots, \bm{\Gamma}_{n+1}^{m} \right) - G_{\mathrm{iso}} \left( \tilde{\bm C}_{n}, \bm{\Gamma}_{n+1}^{1}, \cdots, \bm{\Gamma}_{n+1}^{m} \right).
\end{align}
The algorithmic elasticity tensor $\mathbb{C}_{\mathrm{iso} \: \mathrm{alg1}}$ is defined as 
\begin{align*}
\mathbb{C}_{\mathrm{iso} \: \mathrm{alg1}} := 2\frac{\partial \bm S_{\mathrm{iso} \: \mathrm{alg1}}}{\partial \bm{C}_{n+1}}. 
\end{align*}
In view of the structure of $\bm S_{\mathrm{iso} \: \mathrm{alg1}}$, the elasticity tensor can be  re-written in the following form,
\begin{align}
\label{eq:A2}
\mathbb{C}_{\mathrm{iso} \: \mathrm{alg1}} = \mathbb{C}_{ \mathrm{iso} \: n+1 } + \mathbb{C}_{\mathrm{iso} \: \mathrm{enh1}}, \quad \mbox{with} \quad \mathbb{C}_{ \mathrm{iso} \: n+1 } := 2 \frac{\partial \bm S_{\mathrm{iso} \: n+\frac12} }{ \partial \bm C_{n+1}} \quad \mbox{and} \quad \mathbb{C}_{\mathrm{iso} \: \mathrm{enh1}} := 2 \frac{\partial \bm S_{\mathrm{iso} \: \mathrm{enh1}}}{ \partial \bm C_{n+1} }. 
\end{align}
Using the chain rule, we obtain
\begin{align}
\label{eq:2-1}
\mathbb{C}_{ \mathrm{iso} \: n+1 } = 2 \frac{\partial \bm S_{\mathrm{iso} \: n+\frac12} }{ \partial \bm C_{n+\frac12}} : \frac{\partial \bm C_{n+\frac12}}{\partial \bm C_{n+1}} = \frac12 \mathbb{C}_{\mathrm{iso}\:n+\frac12},
\end{align}
where we have introduced
\begin{align}
\mathbb C_{\mathrm{iso}\:n+\frac12} := 2 \frac{\partial \bm S_{\mathrm{iso} \: n+\frac12} }{ \partial \bm C_{n+\frac12}}.
\end{align}
The definition \eqref{eq:A0}$_2$ leads to
\begin{align}
\label{eq:A3}
\mathbb{C}_{\mathrm{iso}\:\mathrm{enh1}} = 2\bm Z_n \otimes \frac{\partial}{ \partial \bm C_{n+1} } \left( \frac{ \Delta G_{\mathrm{iso}}^{n+1} - \bm S_{\mathrm{iso} \: n+\frac12} : \bm Z_n }{ |\bm Z_n|^2 } \right) + \frac{ \Delta G_{\mathrm{iso}}^{n+1} - \bm S_{\mathrm{iso} \: n+\frac12} : \bm Z_n }{ |\bm Z_n|^2 } \mathbb{I}.
\end{align}
The first term on the right-hand side in \eqref{eq:A3} can be explicitly derived as follows,
\begin{align}
\label{eq:A4}
& 2\bm Z_n \otimes \frac{\partial}{ \partial \bm C_{n+1} } \left( \frac{ \Delta G_{\mathrm{iso}}^{n+1} - \bm S_{\mathrm{iso} \: n+\frac12} : \bm Z_n }{ |\bm Z_n|^2 } \right) \nonumber \\
&= 2 \bm Z_n \otimes \left( \frac{ \partial \Delta G_{\mathrm{iso}}^{n+1}/ \partial \bm{C}_{n+1} - \frac{1}{4} \mathbb{C}_{ \mathrm{iso}\:n+\frac12 }:\bm Z_n - \frac{1}{2} \bm S_{ \mathrm{iso}\:n+\frac12 }}{ |\bm Z_n|^2 }  -2\frac{ \Delta G_{\mathrm{iso}}^{n+1} - \bm S_{\mathrm{iso}\:n+\frac12 } : \bm Z_n  }{ |\bm Z_n|^3 } \frac{ \partial |\bm Z_n| }{ \partial \bm C_{n+1} } \right) \nonumber \\
&= \frac{\bm Z_n}{|\bm Z_n |^2} \otimes \left( 2\frac{\partial \Delta G_{\mathrm{iso}}^{n+1}}{\partial \bm{C}_{n+1}} -\frac{1}{2}\mathbb{C}_{\mathrm{iso}\:n+\frac12}: \bm Z_n - \bm S_{\mathrm{iso}\:n+\frac12} \right) -2\frac{\Delta G_{\mathrm{iso}}^{n+1}- \bm S_{\mathrm{iso}\:n+\frac12 }:\bm Z_n}{|\bm Z_n|^4} \bm Z_n \otimes \bm Z_n,
\end{align}
in which we have used the fact that
\begin{align*}
\frac{ \partial |\bm Z_n| }{ \partial \bm C_{n+1} } &= \frac{ \partial \left(\bm Z_n : \bm Z_n \right)^{\frac{1}{2}} }{ \partial \bm C_{n+1} } = \frac{1}{2}( \bm Z_n : \bm Z_n )^{-\frac{1}{2}} \frac{\partial \left(\bm Z_n : \bm Z_n \right)}{\partial \bm C_{n+1}} = \frac{1}{2|\bm Z_n|} \left(\frac{\partial \bm Z_n}{\partial \bm C_{n+1}} : \bm Z_n + \bm Z_n : \frac{\partial \bm Z_n}{\partial \bm C_{n+1}} \right) \nonumber \\
&=\frac{1}{2|\bm Z_n|} \left( \frac{1}{2}\mathbb{I}:\bm Z_n + \bm Z_n:\frac{1}{2}\mathbb{I} \right) = \frac{\bm Z_n}{2|\bm Z_n|}.
\end{align*}
In view of the definition of $\Delta G_{\mathrm{iso}}^{n+1}$ \eqref{eq:A1} and $G_{\mathrm{iso}}$ \eqref{eq:gibbs-energy}$_3$, the term $\partial \Delta G_{\mathrm{iso}}^{n+1} / \partial \bm{C}_{n+1}$ can be written as
\begin{align*}
\frac{\partial \Delta G_{\mathrm{iso}}^{n+1}}{ \partial \bm C_{n+1} } = \frac{\partial G_{\mathrm{iso}} \left( \tilde{\bm C}_{n+1}, \bm{\Gamma}_{n+1}^{1}, \cdots, \bm{\Gamma}_{n+1}^{m} \right)}{ \partial \bm C_{n+1} } - \sum_{\alpha=1}^{m} \frac{\partial \Upsilon^{\alpha}\left( \tilde{\bm C}_{n}, \bm{\Gamma}_{n+1}^{\alpha} \right)}{ \partial \bm C_{n+1} } = \frac12 \bm S_{\mathrm{iso}\:n+1} - \sum_{\alpha=1}^{m} \frac{\partial \Upsilon^{\alpha}\left( \tilde{\bm C}_{n}, \bm{\Gamma}_{n+1}^{\alpha} \right)}{ \partial \bm C_{n+1} }.
\end{align*}
Recalling the evolution equations \eqref{eq:constitutive-integration-Gamma-1}, the tensors $\bm{\Gamma}_{n+1}^{\alpha}$ are independent of $\bm{C}_{n+1}$. Hence, the last term of the above relation vanishes, and we get
\begin{align}
\label{eq:A5}
\frac{\partial \Delta G_{\mathrm{iso}}^{n+1}}{ \partial \bm C_{n+1} } = \frac12 \bm S_{\mathrm{iso}\:n+1}.
\end{align}
Combining \eqref{eq:A2}-\eqref{eq:A5}, we obtain the final explicit expression for the elasticity tensor as
\begin{align}
\label{eq:A6}
\mathbb{C}_{\mathrm{iso}\:\mathrm{alg1}}
=& \frac{1}{2}\mathbb{C}_{ \mathrm{iso}\:n+\frac12 } + \frac{ \Delta G_{\mathrm{iso}}^{n+1} - \bm S_{\mathrm{iso} \: n+\frac12} : \bm Z_n }{ |\bm Z_n|^2 } \left(\mathbb{I} -2\frac{\bm Z_n \otimes \bm Z_n}{|\bm Z_n|^2} \right) \nonumber \\
& + \frac{\bm Z_n}{|\bm Z_n |^2} \otimes \left( \bm S_{\mathrm{iso}\:n+1} -\frac{1}{2}\mathbb{C}_{\mathrm{iso}\:n+\frac12}: \bm Z_n - \bm S_{\mathrm{iso}\:n+\frac12} \right).
\end{align}

\section{ The elasticity tensor for the algorithmic stress $\bm S_{\mathrm{iso} \: \mathrm{alg2}}$ }
\label{sec:appendix-B}
Here, we derive an explicit expression for the algorithmic elasticity tensor associated with the algorithmic stress given in Section \ref{sec:second-order-update-formula-ISV}. We recall that the algorithmic stress given in \eqref{eq:def-S-alg2} and \eqref{eq:def-S-enh2} can be expressed as
\begin{align*}
\bm S_{\mathrm{iso}\:\mathrm{alg2}} = \bm S_{\mathrm{iso} \: n+\frac12} + \bm S_{\mathrm{iso}\:\mathrm{enh2}}, \quad \bm S_{\mathrm{iso}\:\mathrm{enh2}} := \frac{ \Delta G_{\mathrm{iso}}^{n+\frac12} - \bm S_{\mathrm{iso} \: n+\frac12} : \bm Z_n }{ |\bm Z_n|} \frac{\bm Z_n}{ |\bm Z_n| },
\end{align*}
with the notation
\begin{align}
\label{eq:B1}
&\Delta G_{\mathrm{iso}}^{n+\frac12} := G_{\mathrm{iso}} \left( \tilde{\bm C}_{n+1}, \bm{\Gamma}_{n+\frac12}^{1}, \cdots, \bm{\Gamma}_{n+\frac12}^{m} \right) -G_{\mathrm{iso}} \left( \tilde{\bm C}_{n}, \bm{\Gamma}_{n+\frac12}^{1}, \cdots, \bm{\Gamma}_{n+\frac12}^{m} \right).
\end{align}
The algorithmic elasticity tensor $\mathbb{C}_{\mathrm{iso} \: \mathrm{alg2}}$ can be consequently expressed as
\begin{align}
\label{eq:B2}
\mathbb{C}_{\mathrm{iso}\:\mathrm{alg2}} = \mathbb{C}_{ \mathrm{iso}\:n+1 } + \mathbb{C}_{\mathrm{iso}\:\mathrm{enh2}}, \quad \mbox{with} \quad \mathbb{C}_{\mathrm{iso} \: \mathrm{enh2}} := 2 \frac{\partial \bm S_{\mathrm{iso} \: \mathrm{enh2}}}{ \partial \bm C_{n+1} }.
\end{align}
The expression of $\mathbb{C}_{ \mathrm{iso} \: n+1}$ has been given in \eqref{eq:2-1}. As for $\mathbb{C}_{\mathrm{iso}\:\mathrm{enh2}}$, we have 
\begin{align}
\label{eq:B3}
\mathbb{C}_{\mathrm{iso}\:\mathrm{enh2}} = 2\bm Z_n \otimes \frac{\partial}{ \partial \bm C_{n+1} } \left( \frac{ \Delta G_{\mathrm{iso}}^{n+\frac12} - \bm S_{\mathrm{iso} \: n+\frac12} : \bm Z_n }{ |\bm Z_n|^2 } \right) + \frac{ \Delta G_{\mathrm{iso}}^{n+\frac12} - \bm S_{\mathrm{iso} \: n+\frac12} : \bm Z_n }{ |\bm Z_n|^2 } \mathbb{I}
\end{align}
and 
\begin{align}
\label{eq:B4}
& 2\bm Z_n \otimes \frac{\partial}{ \partial \bm C_{n+1} } \left( \frac{ \Delta G_{\mathrm{iso}}^{n+\frac12} - \bm S_{\mathrm{iso} \: n+\frac12} : \bm Z_n }{ |\bm Z_n|^2 } \right) \nonumber \\
&= \frac{\bm Z_n}{|\bm Z_n |^2} \otimes \left( 2\frac{\partial \Delta G_{\mathrm{iso}}^{n+\frac12}}{\partial \bm{C}_{n+1}} -\frac{1}{2}\mathbb{C}_{\mathrm{iso}\:n+\frac12}: \bm Z_n - \bm S_{\mathrm{iso}\:n+\frac12} \right) -2\frac{\Delta G_{\mathrm{iso}}^{n+\frac12}- \bm S_{\mathrm{iso}\:n+\frac12 }:\bm Z_n}{|\bm Z_n|^4} \bm Z_n \otimes \bm Z_n.
\end{align}
In view of the definition of $\Delta G_{\mathrm{iso}}^{n+\frac12}$ \eqref{eq:B1} and $G_{\mathrm{iso}}$ \eqref{eq:gibbs-energy}$_3$, the term $\partial \Delta G_{\mathrm{iso}}^{n+\frac12} / \partial \bm{C}_{n+1}$ can be written as
\begin{align}
\label{eq:B5}
\frac{\partial \Delta G_{\mathrm{iso}}^{n+\frac12}}{\partial \bm C_{n+1}} &= \frac{\partial G_{\mathrm{iso}} \left( \tilde{\bm C}_{n+1}, \bm{\Gamma}_{n+\frac12}^{1}, \cdots, \bm{\Gamma}_{n+\frac12}^{m} \right)}{\partial \bm C_{n+1}} - \sum_{\alpha=1}^{m} \frac{\partial \Upsilon^{\alpha}\left( \tilde{\bm C}_{n}, \bm{\Gamma}_{n+\frac12}^{\alpha} \right)}{ \partial \bm C_{n+1} } \nonumber \\
&= \frac12 \bm{S}_{\mathrm{iso}} \left( \tilde{\bm C}_{n+1}, \bm{\Gamma}_{n+\frac12}^{1}, \cdots, \bm{\Gamma}_{n+\frac12}^{m} \right) - \sum_{\alpha=1}^{m} \frac{\partial \Upsilon^{\alpha}\left( \tilde{\bm C}_{n}, \bm{\Gamma}_{n+\frac12}^{\alpha} \right)}{ \partial \bm C_{n+1} }.
\end{align}
Recalling the definition of $\Upsilon^{\alpha}$ \eqref{eq:def-flv-Upsilon}, one has 
\begin{align}
\label{eq:B6}
\frac{\partial \Delta G_{\mathrm{iso}}^{n+\frac12}}{\partial \bm C_{n+1}} &= \frac12 \bm{S}_{\mathrm{iso}} \left( \tilde{\bm C}_{n+1}, \bm{\Gamma}_{n+\frac12}^{1}, \cdots, \bm{\Gamma}_{n+\frac12}^{m} \right) - \sum_{\alpha=1}^{m} \frac{\partial}{\partial \bm C_{n+1}} \left( \frac{1}{4\mu^{\alpha}} \left\lvert \tilde{\bm S}^{\alpha}_{\mathrm{iso}\:n} - \hat{\bm S}^{\alpha}_{0} - \mu^{\alpha} \left( \bm \Gamma^{\alpha}_{n+\frac12} - \bm I \right)\right\lvert^2  \right) \nonumber \\
&=\frac12 \bm{S}_{\mathrm{iso}} \left( \tilde{\bm C}_{n+1}, \bm{\Gamma}_{n+\frac12}^{1}, \cdots, \bm{\Gamma}_{n+\frac12}^{m} \right) - \sum_{\alpha=1}^{m} \frac{\partial}{\partial \bm C_{n+1}} \left( \frac{1}{4\mu^{\alpha}} \lvert \bm{\bar{Q}} \lvert^2\right),
\end{align}
in which
\begin{align*}
\bm{\bar{Q}} := \tilde{\bm S}^{\alpha}_{\mathrm{iso}\:n} - \hat{\bm S}^{\alpha}_{0} - \mu^{\alpha} \left( \bm \Gamma^{\alpha}_{n+\frac12} - \bm I \right).
\end{align*}
Different from the derivation in \ref{sec:appendix-A}, the variables $\bm{\Gamma}_{n+1}^{\alpha}$ depend on $\bm{C}_{n+1}$, according to the evolution equations \eqref{eq:isv-update-formula-2}. We have
\begin{align}
\label{eq:B7}
\frac{\partial \bm{\bar{Q}}}{\partial \bm C_{n+1}} &= -\mu^{\alpha} \frac12 \frac{\partial \bm{\Gamma}^{\alpha}_{n+1}}{\partial \bm C_{n+1}} = -\frac{\mu^{\alpha} \Delta t}{4\eta^{\alpha}+2\mu^{\alpha}\Delta t} \frac{\partial \bm{\tilde{S}}^{\alpha}_{\mathrm{iso\:n+1}}}{\partial \bm C_{n+1}} = -\frac{\mu^{\alpha} \Delta t}{4\eta^{\alpha}+2\mu^{\alpha}\Delta t} \frac{\partial \bm{\tilde{S}}^{\alpha}_{\mathrm{iso\:n+1}}}{\partial \bm{\tilde{C}}_{n+1}}:J^{-\frac23}_{n+1}\mathbb{P}^T_{n+1},
\end{align}
Consequently, the second term on the right-hand side of \eqref{eq:B6} can be expressed as
\begin{align}
\label{eq:B8}
\sum_{\alpha=1}^{m} \frac{\partial}{\partial \bm C_{n+1}} \left( \frac{1}{4\mu^{\alpha}} \lvert \bm{\bar{Q}} \lvert^2\right) &= \sum_{\alpha=1}^{m} \frac{1}{4\mu^{\alpha}} \left( \frac{\partial \bm{\bar{Q}}}{\partial \bm C_{n+1}} : \bm{\bar{Q}} + \bm{\bar{Q}} : \frac{\partial \bm{\bar{Q}}}{\partial \bm C_{n+1}} \right) \nonumber \\
&= -\sum_{\alpha=1}^{m} \frac{J^{-\frac23}_{n+1} \Delta t}{16\eta^{\alpha}+8\mu^{\alpha}\Delta t} \left( \frac{\partial \bm{\tilde{S}}^{\alpha}_{\mathrm{iso\:n+1}}}{\partial \bm{\tilde{C}}_{n+1}} : \mathbb{P}^T_{n+1}:\bm{\bar{Q}} + \bm{\bar{Q}} : \frac{\partial \bm{\tilde{S}}^{\alpha}_{\mathrm{iso\:n+1}}}{\partial \bm{\tilde{C}}_{n+1}} : \mathbb{P}^T_{n+1} \right).
\end{align}
Combining \eqref{eq:B6} and \eqref{eq:B8}, we have
\begin{align}
\label{eq:B9}
\frac{\partial \Delta G_{\mathrm{iso}}^{n+\frac12}}{\partial \bm C_{n+1}} = &\frac12 \bm{S}_{\mathrm{iso}} \left( \tilde{\bm C}_{n+1}, \bm{\Gamma}_{n+\frac12}^{1}, \cdots, \bm{\Gamma}_{n+\frac12}^{m} \right) \nonumber \\
&+\sum_{\alpha=1}^{m} \frac{J^{-\frac23}_{n+1} \Delta t}{16\eta^{\alpha}+8\mu^{\alpha}\Delta t} \left( \frac{\partial \bm{\tilde{S}}^{\alpha}_{\mathrm{iso\:n+1}}}{\partial \bm{\tilde{C}}_{n+1}} : \mathbb{P}^T_{n+1}:\bm{\bar{Q}} + \bm{\bar{Q}} : \frac{\partial \bm{\tilde{S}}^{\alpha}_{\mathrm{iso\:n+1}}}{\partial \bm{\tilde{C}}_{n+1}} : \mathbb{P}^T_{n+1} \right).
\end{align}
Invoking \eqref{eq:B2}-\eqref{eq:B4} and \eqref{eq:B9}, we obtain the final explicit expression for the elasticity tensor,
\begin{align}
\label{eq:B10}
\mathbb{C}_{\mathrm{iso}\:\mathrm{alg2}}
&= \frac{1}{2}\mathbb{C}_{ \mathrm{iso}\:n+\frac12 } + \frac{ \Delta G_{\mathrm{iso}}^{n+\frac12} - \bm S_{\mathrm{iso} \: n+\frac12} : \bm Z_n }{ |\bm Z_n|^2 } \left(\mathbb{I} -2\frac{\bm Z_n \otimes \bm Z_n}{|\bm Z_n|^2} \right) \nonumber \\
&\hspace{0.3cm} + \frac{\bm Z_n}{|\bm Z_n |^2} \otimes \left( \bm{S}_{\mathrm{iso}} \left( \tilde{\bm C}_{n+1}, \bm{\Gamma}_{n+\frac12}^{1}, \cdots, \bm{\Gamma}_{n+\frac12}^{m} \right)-\frac{1}{2}\mathbb{C}_{\mathrm{iso}\:n+\frac12}: \bm Z_n - \bm S_{\mathrm{iso}\:n+\frac12} \right) \nonumber \\
&\hspace{0.3cm} + \frac{\bm Z_n}{|\bm Z_n |^2} \otimes \left( \sum_{\alpha=1}^{m} \frac{J^{-\frac23}_{n+1} \Delta t}{8\eta^{\alpha}+4\mu^{\alpha}\Delta t} \left( \frac{\partial \bm{\tilde{S}}^{\alpha}_{\mathrm{iso\:n+1}}}{\partial \bm{\tilde{C}}_{n+1}} : \mathbb{P}^T_{n+1}:\bm{\bar{Q}} + \bm{\bar{Q}} : \frac{\partial \bm{\tilde{S}}^{\alpha}_{\mathrm{iso\:n+1}}}{\partial \bm{\tilde{C}}_{n+1}} : \mathbb{P}^T_{n+1} \right) \right).
\end{align}
The explicit form of $\partial \bm{\tilde{S}}^{\alpha}_{\mathrm{iso\:n+1}} / \partial \bm{\tilde{C}}_{n+1}$ is determined by the viscoelasticity model. For the HS model, we have
\begin{align*}
\frac{\partial \bm{\tilde{S}}^{\alpha}_{\mathrm{iso\:n+1}}}{\partial \bm{\tilde{C}}_{n+1}} = \frac{\partial}{\partial \bm{\tilde{C}}_{n+1}} \left( \mu^{\alpha} \left( \tilde{\bm{C}}_{n+1} - I \right) \right) = \mu^{\alpha} \mathbb{I};
\end{align*}
for the MIPC model, we have 
\begin{align*}
\frac{\partial \bm{\tilde{S}}^{\alpha}_{\mathrm{iso\:n+1}}}{\partial \bm{\tilde{C}}_{n+1}} =  \frac{\partial}{\partial \bm{\tilde{C}}_{n+1}} \left(  \beta^{\infty}_{\alpha} 2\frac{\partial G^{\infty}_{\mathrm{iso\:n+1}}}{\partial \tilde{\bm{C}}_{n+1}} \right) = \frac12 \beta^{\infty}_{\alpha} J^{\frac43}_{n+1} \tilde{\mathbb{C}}^{\infty}_{\mathrm{iso}\:n+1}, \quad \mbox{with} \quad \tilde{\mathbb{C}}^{\infty}_{\mathrm{iso}\:n+1} := 4J^{-\frac43}_{n+1} \left( \frac{\partial^2 G^{\infty}_{\mathrm{iso}}}{\partial \bm{\tilde{C}} \partial \bm{\tilde{C}}} \right)_{n+1}.
\end{align*}
The final explicit form of the elasticity tensor reads as
\begin{align}
\label{eq:B11}
\mathbb{C}_{\mathrm{iso}\:\mathrm{alg2}}
&= \frac{1}{2}\mathbb{C}_{ \mathrm{iso}\:n+\frac12 } + \frac{ \Delta G_{\mathrm{iso}}^{n+\frac12} - \bm S_{\mathrm{iso} \: n+\frac12} : \bm Z_n }{ |\bm Z_n|^2 } \left(\mathbb{I} -2\frac{\bm Z_n \otimes \bm Z_n}{|\bm Z_n|^2} \right) \nonumber \\
&\hspace{0.3cm} + \frac{\bm Z_n}{|\bm Z_n |^2} \otimes \left( \bm{S}_{\mathrm{iso}} \left( \tilde{\bm C}_{n+1}, \bm{\Gamma}_{n+\frac12}^{1}, \cdots, \bm{\Gamma}_{n+\frac12}^{m} \right)-\frac{1}{2}\mathbb{C}_{\mathrm{iso}\:n+\frac12}: \bm Z_n - \bm S_{\mathrm{iso}\:n+\frac12} \right) \nonumber \\
&\hspace{0.3cm} +
\begin{cases}
\frac{\bm Z_n}{|\bm Z_n |^2} \otimes \left( \sum_{\alpha=1}^{m} \frac{J^{-\frac23}_{n+1} \mu^{\alpha} \Delta t}{8\eta^{\alpha}+4\mu^{\alpha}\Delta t} \left( \mathbb{P}^T_{n+1}:\bm{\bar{Q}} + \bm{\bar{Q}} : \mathbb{P}^T_{n+1} \right) \right) & \mbox{HS model}, \\
\frac{\bm Z_n}{|\bm Z_n |^2} \otimes \left( \sum_{\alpha=1}^{m} \frac{J^{\frac23}_{n+1} \beta^{\infty}_{\alpha} \Delta t}{16\eta^{\alpha}+8\mu^{\alpha}\Delta t} \left( \tilde{\mathbb{C}}^{\infty}_{\mathrm{iso}\:n+1} : \mathbb{P}^T_{n+1}:\bm{\bar{Q}} + \bm{\bar{Q}} : \tilde{\mathbb{C}}^{\infty}_{\mathrm{iso}\:n+1} : \mathbb{P}^T_{n+1} \right) \right) & \mbox{MIPC  model}.
\end{cases}
\end{align}

\section{Glossary of Terms}
\label{sec:appendix-C}
\renewcommand{\arraystretch}{1.3}
\begin{longtable}{p{3.5cm} p{6cm} p{3cm}}
\hline
Symbol & Name or description & Place of definition or first occurrence\\
\hline
$\bm \Gamma^{\alpha}$, $\bm Q^{\alpha}$ & The $\alpha$-th internal state variable and its conjugate variable & Section \ref{sec:continuum-basis}, \eqref{eq:def-Q} \\
$\mu^{\alpha}$, $\eta^{\alpha}$ & The shear modulus and viscosity associated with the $\alpha$-th relaxation process & \eqref{eq:def-flv-Upsilon}, \eqref{eq:constitutive-flv-Q}  \\
$\beta^{\infty}_{\alpha}$ & The strain-energy factor & \eqref{eq:G_alpha_def_MIPC} \\
$P$, $p$ & The referential and spatial thermodynamic pressure & Section \ref{sec:continuum-basis} \\
$G(\tilde{\bm C}, P, \bm \Gamma^{1}, \cdots, \bm \Gamma^{m})$ & The Gibbs free energy & \eqref{eq:gibbs-energy} \\
$G_{\mathrm{vol}}^{\infty}(P)$ & The volumetric part of the Gibbs free energy & \eqref{eq:gibbs-energy} \\
$G_{\mathrm{iso}}(\tilde{\bm C}, \bm \Gamma^{1}, \cdots, \bm \Gamma^{m})$ & The isochoric part of the Gibbs free energy & \eqref{eq:gibbs-energy} \\
$G_{\mathrm{iso}}^{\infty}(\tilde{\bm C})$ & The equilibrium part of $G_{\mathrm{iso}}$ & \eqref{eq:gibbs-energy} \\
$\Upsilon^{\alpha}\left( \tilde{\bm C}, \bm \Gamma^{\alpha} \right)$ & The non-equilibrium part of $G_{\mathrm{iso}}$ associated with the $\alpha$-th relaxation process, configurational free energy & \eqref{eq:gibbs-energy} \\
$G^{\alpha}(\tilde{\bm C})$, $\tilde{\bm S}^{\alpha}_{\mathrm{iso}}$ & An energy-like function used in the definition of $\Upsilon^{\alpha}$ and its associated fictitious stress & \eqref{eq:def-flv-Upsilon-terms}$_1$ \\
$\hat{\bm S}^{\alpha}_{0}$ & A constant stress-like tensor in the definition of $\Upsilon^{\alpha}$  & \eqref{eq:def-flv-Upsilon-terms}$_2$ \\
$\tilde{\bm S}$ & The fictitious second Piola-Kirchhoff stress & \eqref{eq:split-fictitious-S}$_1$ \\
$\tilde{\bm S}_{\mathrm{iso}}^{\infty}$ & The equilibrium part of the fictitious isochoric second Piola-Kirchhoff stress & \eqref{eq:split-fictitious-S}$_2$  \\
$\tilde{\bm S}^{\alpha}_{\mathrm{neq}}$ & The $\alpha$-th non-equilibrium part of the fictitious isochoric second Piola-Kirchhoff stress & \eqref{eq:non-equilbrium-fictitious-S-def} \\
$\mathbb V^{\alpha}$ & The viscosity tensor associated with the $\alpha$-th relaxation process & \eqref{eq:constitutive-flv-Q} \\
$\bm S_{\mathrm{vol}}$ & The volumetric part of the second Piola-Kirchhoff stress & \eqref{eq:constitutive_S_vol} \\
$\bm S_{\mathrm{iso}}$ & The isochoric part of the second Piola-Kirchhoff stress & \eqref{eq:constitutive_S_iso} \\
$\bm S_{\mathrm{iso}}^{\infty}$ & The equilibrium part of the isochoric second Piola-Kirchhoff stress & \eqref{eq:constitutive_S_infty_iso_S_neq_alpha}$_1$  \\
$\bm S^{\alpha}_{\mathrm{neq}}$ & The $\alpha$-th non-equilibrium part of the isochoric second Piola-Kirchhoff stress & \eqref{eq:constitutive_S_infty_iso_S_neq_alpha}$_2$ \\
$\bm P_{\mathrm{iso}}^{\infty}$ & The isochoric part of the First Piola-Kirchhoff stress & \eqref{eq:constitutive-P-iso} \\
\hline
\end{longtable}

\bibliographystyle{elsarticle-num}
\bibliography{viscoelaticity-EM.bib}

\end{document}